\begin{document} 
\title{\bf The Torsion Ideal Lattice of the Endomorphism Ring of an Abelian $p$--Group }

\author {Phill Schultz}
\address[phill.schultz@uwa.edu.au]{The University of Western Australia}

\subjclass[2010]{20K10,   20K30 } \keywords{$p$--groups }
\theoremstyle{plain}
\newtheorem{theorem}{Theorem}[section]

\newtheorem{corollary}[theorem]{Corollary}
\newtheorem{lemma}[theorem]{Lemma}
\newtheorem{proposition}[theorem]{Proposition}
\newtheorem{conjecture}[theorem]{Conjecture}
\newtheorem{hypothesis}[theorem]{Hypothesis}
\newtheorem{condition}[theorem]{Condition}
\newtheorem{fact}[theorem]{Fact}
\newtheorem{problem}[theorem]{Problem}

\theoremstyle{definition}
\newtheorem{definition}[theorem]{Definition}
\newtheorem{notation}[theorem]{Notation}

\theoremstyle{remark}
\newtheorem{remark} [theorem]{Remark}
\newtheorem{remarks}[theorem]{Remarks}
\newtheorem{example}[theorem]{Example}
\newtheorem{examples}[theorem]{Examples}

\renewcommand{\leq}{\leqslant}
\renewcommand{\geq}{\geqslant}
\newcommand{\Aut}{\mathop{\mathrm{Aut}}\nolimits}
\newcommand{\End}{\mathop{\mathrm{End}}\nolimits}
\newcommand{\Ker}{\mathop{\mathrm{Ker}}\nolimits}
\newcommand{\Hom}{\mathop{\mathrm{Hom}}\nolimits}
\newcommand{\finind }{ ^{<\omega}}

\newcommand{\PR}{\begin\parbreak\end\parbreak\begin{proof}\parbreak\end{proof}\parbreak}
\newcommand{\LE}{\begin\parbreak\end\parbreak\begin{lemma}\parbreak\end{lemma}\parbreak}
\newcommand{\PROP}{\begin\parbreak\end\parbreak\begin{proposition}\parbreak\end{pro[osition}\parbreak}

\newcommand\bbQ{{\mathbb{Q}}}
\newcommand\bbZ{{\mathbb{Z}}}
\newcommand\bbN{{\mathbb{N}}}
\newcommand\bbT{{\mathbb{T}}}
\newcommand{\bbP}{\mathbb{P}}
\newcommand{\bbS}{\mathbb{S}}
\newcommand{\bbSN}{\mathbb{S}\mathbb{N}}
\newcommand{\bbF}{\mathbb{F}}

\newcommand{\boldi}{\mathbf{\iota}}
\newcommand{\boldn}{\mathbf{n}}
\newcommand{\boldb}{\mathbf{b}}
\newcommand{\boldc}{\mathbf{c}}
\newcommand{\bolde}{\mathbf{e}}
\newcommand{\bolda}{\mathbf{a}}
\newcommand{\boldd}{\mathbf{d}}
\newcommand{\bi}{\mathbf{\iota}}
\newcommand{\boldr}{\mathbf{r}}
\newcommand{\boldm}{\mathbf{m}}
\newcommand{\boldx}{\mathbf{x}}
\newcommand{\boldj}{\mathbf{j}}

\newcommand{\boldrho}{\boldsymbol{\rho}}
\newcommand{\boldnu}{\boldsymbol{\nu}}
\newcommand{\boldt}{\mathbf{t}}
\newcommand{\boldu}{\mathbf{u}}
\newcommand{\boldk}{\mathbf{k}}
\newcommand{\boldell}{\boldsymbol{\ell}}
\newcommand{\boldsigma}{\boldsymbol{\sigma}}

\newcommand{\boldtau}{\boldsymbol{\tau}}
\newcommand{\boldmu}{\boldsymbol{\mu}}

\newcommand{\calQ}{{\mathcal Q}}
\newcommand{\calP}{\mathcal{P}}
\newcommand{\calC}{\mathcal{C}}
\newcommand{\calD}{\mathcal{D}}
\newcommand{\calA}{\mathcal{A}}
\newcommand{\calS}{\mathcal{S}}
\newcommand{\calB}{\mathcal{B}}
\newcommand{\calX}{\mathcal{X}}
\newcommand{\calY}{\mathcal{Y}}
\newcommand{\calL}{\mathcal{L}}
\newcommand{\calM}{\mathcal{M}}
\newcommand{\calG}{\mathcal{G}}
\newcommand{\calT}{\mathcal{T}}
\newcommand{\calI}{\mathcal{I}\kern-1pt\textit nd}
\newcommand{\calW}{\mathcal{W}}
\newcommand{\calJ}{\mathcal{J}}
\newcommand{\calE}{\mathcal{E}}
\newcommand{\calK}{\mathcal{K}}
\newcommand{\calF}{\mathcal{F}}
\newcommand{\calV}{\mathcal{V}}
\newcommand{\calU}{\mathcal{U}}
\newcommand{\calN}{\mathcal{N}}
\newcommand{\calZ}{\mathcal{Z}}
\newcommand{\calLN}{\mathcal{LN}}
\newcommand{\calR}{\mathcal{R}}

\newcommand{\calId}{\mathcal{I}\kern-1pt\textit d}
\newcommand{\It}{\mathcal I\boldt}
\newcommand{\calFI}{\mathcal FI}

\newcommand{\Iinv}{\textbf{I-inv}}
\newcommand{\IM}{\mathtt{Im}}
\newcommand{\KER}{\mathtt{Ker}}

 \newcommand{\up}{^{(\xi)}}
 \newcommand{\LEM}{\begin{lemma}}
 \newcommand{\ov}{\overline}
\newcommand{\rank}{\operatorname{rank}}
\newcommand{\order}{\operatorname{order}}
\renewcommand{\Im}{\operatorname{Im}}
\newcommand{\type}{\operatorname{type}}
\newcommand{\lcm}{\operatorname{lcm}}
\newcommand{\trace}{\operatorname{trace}}
\newcommand{\Rad}{\operatorname{Rad}}
\newcommand{\ord}{\operatorname{order}}
\newcommand{\Jon}{\operatorname{Jon}}
\newcommand{\Sym}{\operatorname{Sym}}
\newcommand{\Ext}{\operatorname{Ext}}
\newcommand{\ACD}{\operatorname{ACD}}
\newcommand{\im}{\operatorname{im}}
\newcommand{\height}{\operatorname{height}}
\newcommand{\stem}{\operatorname{stem}}
\newcommand{\gap}{\operatorname{gap}}
\newcommand{\expp}{\operatorname{exp}}
\newcommand{\h}{\operatorname{height}}
\newcommand{\Ann}{\operatorname{Ann}}
\newcommand{\length}{\operatorname{length}}
\newcommand{\branch}{\operatorname{branch}}
\newcommand{\grad}{\operatorname{grad}}
\newcommand{\lift}{\operatorname{lift}}
\newcommand{\coexp}{\operatorname{co--exp}}
\newcommand{\Ulm}{\operatorname{Ulm}}
\newcommand{\dual}{\operatorname{dual}}
\newcommand{\di}{\operatorname{\dagger-{\rm inv}}}

\newcommand{\ind}{\operatorname{ind}}
\newcommand{\Range}{\operatorname{Range}}

\newcommand{\rot}{\operatorname{root}}
\newcommand{\Mono}{\operatorname{Mono}}
\newcommand{\Iso}{\operatorname{Iso}}
\newcommand{\divi}{\operatorname{div}}
\newcommand{\GL}{\operatorname{GL}}\
\newcommand{\Mod}{\operatorname{Mod-}}
\newcommand{\depth}{\operatorname{depth}}
\newcommand{\tr}{\operatorname{trace}}
\newcommand{\lat}{\operatorname{lat}}

\newcommand{\la}{\langle} 
\newcommand{\ra}{\rangle}      

\newcommand{\Sub}{\texttt{Subgroups}}
\newcommand{\QSub}{\texttt{Quasi-Subgroups}}
\newcommand{\Bases}{\texttt{Bases}}
\newcommand{\bases}{\texttt{Bases}}
\newcommand{\QBases}{ \texttt{Quasi-Bases}}
\newcommand{\Subgroup}{\texttt{Subgroup}}

\newcommand{\st}[1]{\la{#1}\ra_*}

\begin{abstract}  The    lattice of ideals of the torsion ideal of the endomorphism ring  of an abelian $p$--group is classified by   a system of cardinal invariants.
\end{abstract}
\noindent
 \maketitle

\section{Introduction}

The purpose of this paper is to classify the lattice of ideals of the torsion ideal   of the endomorphism ring of  a reduced abelian $p$--group. Let   $G$ be an unbounded reduced abelian $p$--group and $\boldt$ its ring of endomorphisms of finite order.   I show in Theorem \ref{t-ind} that    the lattice of ideals  of $\boldt$ is isomorphic to  the lattice under  the pointwise ordering of pairs $(\boldsigma, \boldm)$ where $\boldsigma$ is a $G$--admissible indicator with finite entries and $\boldm$ is a decreasing sequence of infinite cardinals. 
 This result generalises the   classification of the lattice of ideals of the endomorphism ring of a  bounded abelian $p$--group described in \cite{AvS} and \cite{ASZ}.
  
   After   recalling basic properties of $p$--groups in \S2,
in   \S3   I describe the known properties of the endomorphism ring   of $G$ and its torsion ideal $\boldt$.  In \S4 I construct a Galois Correspondence between the $\boldt$--invariant subgroups of $G$ and the ideals of $\boldt$. In \S5 I define the lattice of ranked  admissible indicators of $\boldt$. Finally in \S6 I use   ranked  admissible indicators to describe the lattice of ideals of $\boldt$. 
  \section{ Notation} 
Except when explicitly stated otherwise,  the noun \lq group\rq\  and the symbol $G$ signify  a reduced abelian $p$--group. Function names are written on the right of their arguments.
Otherwise, the notation  throughout this paper is standard, as found for example in \cite{Fuchs}.  
In particular,
 
\begin{itemize}\item  $\bbN$ denotes the poset of natural numbers including 0, $\bbN^+$ the positive integers, and for $n\in\bbN^+, \ [n]=\{1,\dots,n \}$;

\item For $n\in \bbN^+,\ \bbZ(p^n)$ is the cyclic $p$--group of order  $p^n$;

\item For  $a\in G$, the \textit{exponent  of $a$},\ $\exp(a)=\min\{k\in\bbN\colon p^ka=0\}$; $G$ is bounded
if $ \sup\{\exp(a)\colon a\in G\}$ is finite and if so, this supremum is denoted $\exp(G)$;

\item  $G[p^k]=\{a\in G\colon \exp(a)\leq k \}$; in particular, $G[p]$ is  the socle of $G$;
\item For any ordinal $\kappa,\  p^\kappa G[p]$ denotes $(p^\kappa G)[p]$;
\item For a subgroup $H$ of $G$, denoted $H\leq G,\ \rank(H)$ is the cardinality of a maximum linearly independent subset of $H$;
\item $\calE=\calE(G)$ is  the endomorphism ring of $G$ and $\boldt=\boldt(G)$ is the torsion ideal of $\calE$;  
 
\item $\bbF_p$ is the field of $p$ elements, so $G[p]$ is a $\bbF_p$--space of dimension $\rank(G[p])$;
\item For $a,\ b$ in a lattice $\calL,\ a\wedge b$ is the greatest lower  bound and  $a\vee b$ the  least upper bound of $a$   and $b$. 
\end{itemize}
The    definitions and main properties of basic subgroups,  heights, length, Ulm invariants  and indicators of a group and   separable  groups can be found in \cite[Chapters 5, 10 and 11]{Fuchs}. 
\subsection{$\Sigma$--cyclic groups} A  \textit{$\Sigma$--cyclic} group is  a direct sum of cyclic groups. Each     $\Sigma$--cyclic group $B$ has a \textit{standard presentation} of the form $B=\bigoplus_{i\in S} B_i$ where $S=[s],s\in\bbN^+$, or $S=\bbN^+$ and  for all $i,\,j\in S$

 \begin{enumerate}

\item $B_i\cong \bigoplus_{m_i}\bbZ(p^{n_i}),\ n_i\in\bbN^+$ and $m_i$  is  a non--zero cardinal;
 \item $i<j$ implies $ {n_i} <n_j$.
 
 \end{enumerate} 
 
 A $\Sigma$--cyclic  group $B$ is denoted by $B(\boldn,\,\boldm)$, where $\boldn=(n_i\colon i\in S)$ and $\boldm=(m_i\colon i\in S)$.
 Thus $\exp(B_i)=n_i$ and $\rank(B_i)=m_i$.
 
It is shown in \cite[Chapter 5, \S 5]{Fuchs} that every   group $G$   has  a  \textit{basic subgroup} $B(\boldn,\,\boldm)$, unique up to isomorphism,  satisfying the following properties:
\begin{enumerate}  \item  $B$ is a $\Sigma$--cyclic  group, pure in $G$, and for all $i\in S,\ \bigoplus_{j\leq i}B_j$ is a maximal pure $p^{n_i}$--bounded subgroup of $G$;

\item   For all $i\in S,\ G=\bigoplus_{j\leq i}B_j\oplus\left(\bigoplus_{j>i}B_j +p^iG\right)$; 
\item    $G/B$ is divisible;  thus every $f\in\calE$ is determined by its value on $B$  i.e., if $f,\ g\in\calE$ satisfy  $g|_B=f|_B$ then $f=g$.  In particular, if $f$ is zero on $B$, then $f=0$;
\item   If $A$ is a $\Sigma$--cyclic pure subgroup of $G$, then $A$  is a direct summand of some basic subgroup of $G$.
\end{enumerate}

 \subsection{Ulm invariants} (\cite[Chapter 10, \S1]{Fuchs}) Let $G$ be a group of length $\lambda$.
  For all $\kappa<\lambda$, the \textit{$\kappa$--Ulm invariant}, $u_\kappa=\rank(p^\kappa G[p]/p^{\kappa+1}G[p])$.
    
 Let  $\boldu(G)$ be the sequence $(u_\kappa\colon \kappa<\lambda\colon u_{\kappa}\ne 0)$ of non--zero Ulm invariants of $G$. 
   
 \subsection{Indicators}  The following notation and properties of indicators are found in \cite[Chapter 10, \S1]{Fuchs}:
 \begin{enumerate} 
\item An \textit{indicator} $\boldsigma=(\sigma_i\colon i\in\bbN)$ is a   strictly  increasing  finite sequence of ordinals $\sigma_i,\ i\leq n$, followed by a countably infinite sequence of symbols $\infty$, or a strictly increasing  sequence of ordinals.
In the former case, we abbreviate the indicator $(\sigma_0,\dots,\sigma_n,\infty,\dots,\infty)$ by $(\sigma_0,\dots,\sigma_n,\infty)$. 
The set of  indicators   is denoted  $\calI$.

\item   An indicator $\boldsigma$ has \textit{  length $n$} in the former case and \textit{infinite length} in the latter case.   An indicator  $\boldsigma$ has \textit{finite entries} if  each ordinal entry $\sigma_i$ is finite. The \textit{zero indicator}, denoted $(\infty)$, has length 0 and is considered to have finite entries. 

\item $\calI$ is ordered pointwise:   $\boldsigma$ \textit{precedes} $\boldtau$, denoted $\boldsigma\preceq\boldtau$ if and only if for all $i<\omega,\ \sigma_i\leq\tau_i$, where we regard an ordinal $\sigma_i<\infty$ and $\infty\leq\infty$. 
 
\item $\calI$  is a well--ordered set with minimum $\boldsigma^{min}=(i\colon i<\omega)$ and maximum $\boldsigma^{max}=(\infty)$.
 If $\calS\subseteq\calI$  then $\bigwedge\calS=(\sigma_i)$ where $\sigma_i=\min_{\boldtau\in\calS}\tau_i$ and $\bigvee\calS= (\rho_i)$ where  $\rho_i=\sup_{\boldtau\in\calS}\tau_i$, so $\calI$  is a complete lattice. 

 \item The set of    indicators   with finite entries is denoted  $\calI^{<\omega}$.
$\calI^{<\omega}$ is a sublattice of  $\calI$, closed under arbitrary meets  and finite  joins.

\item   $\boldsigma\in\calI$    \textit{has a gap at $i$} if $\sigma_i+1<\sigma_{i+1}<\infty$. 	 
\end{enumerate}
Indicators are used to define certain invariants of groups. Let   $G$ be a group and   $\boldsigma\in\calI$. $\boldsigma$  is \textit{$G$--admissible} if 

\begin{enumerate}\item   For all $i<\omega$, if $\boldsigma$ has a gap at $i$ and $\sigma_i=\kappa$, then the   Ulm invariant  $u_\kappa\ne 0$;  
\item If $\exp(G)=e$ then $\boldsigma$ has length $ e$.
\end{enumerate}
For example, for any group $G$, $(i\colon i\in[0,n],\infty),\, (i\colon i<\omega)$ and $(\infty)$ are   $G$--admissible.

 Let $\calI(G)$ be the set of  $G$--admissible indicators and $\calI^{<\omega}(G)=\calI(G)\cap  \calI^{<\omega}$. The following properties of $\calI(G)$ are either proved in \cite[Chapter 10]{Fuchs} or are straightforward consequences of the definitions:

\begin{enumerate} \item If $G$ is $\Sigma$--cyclic, then every $\boldsigma\in\calI(G)$ has finite entries. Conversely, if $G$ has basic subgroup $B$ and $\boldsigma\in \calI^{<\omega}(G)$, then $\boldsigma\in \calI(B)$.
\item Let $a\in G$ with $\exp(a)=n+1$. Let  $\sigma_i=\height(p^ia)$. The \textit{indicator of $a$} is the $G$--admissible indicator $\ind(a)=(\sigma_i\colon i\in[0,n],\infty)$.  
 Conversely, if $\boldsigma\in\calI(G)$   is  finite   of length $n$,   then there exists  $a\in G$  with $\exp(a)=n$ such that $\boldsigma=\ind(a)$;  

\item   For all $\boldsigma\in\calI(G)$, let $G(\boldsigma)=   \{a\in G\colon \sigma \preceq\ind(a) \}$. 
$G(\boldsigma)$ is a fully invariant subgroup of $G$, called an  \textit{indicator subgroup}. 

\item   Let $\boldsigma,\ \boldtau\in \calI(G)$. Then $\boldsigma\preceq\boldtau$ if and only if $G(\boldtau)\leq G(\boldsigma)$.

\item   Let $a\in G$ and $ f\in \calE$. Then $\ind(a)\preceq\ind(af)$.
$G$ is \textit{fully transitive} if for all $a,\,b\in G,\   \ind(a)\preceq\ind(b)$ implies there exists $f\in\calE$ such that $b=af$. 
Simply presented and separable groups are fully transitive.
  \end{enumerate}
  
\begin{proposition}\label{ndsub} Let  $G$ be a group and let $\boldsigma,\ \boldtau\in\calI(G)$
\begin{enumerate}
\item  For all $i<\omega,\   
  \sigma_i=\min\{\height(p^ia)\colon a\in G(\boldsigma)\}$.
\item   $\boldsigma=\boldtau$ if and only if $G(\boldtau)= G(\boldsigma)$.

  \item $\calI(G)$ is a   lattice anti--isomorphic to $G(\calI)$;
 \item $\calI^{<\omega}(G)$ is a sublattice anti--isomorphic to $G(\calI^{<\omega})$.
\end{enumerate}
\end{proposition}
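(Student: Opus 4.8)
The plan is to treat (1) as the engine that drives everything else. Fix $i<\omega$. If $a\in G(\boldsigma)$ then $\boldsigma\preceq\ind(a)$, so $\sigma_i\leq\height(p^ia)$; hence $\sigma_i$ is a lower bound for $\{\height(p^ia)\colon a\in G(\boldsigma)\}$ (a set containing $\height(0)=\infty$, so nonempty), and it only remains to realise it. When $\sigma_i=\infty$ every $a\in G(\boldsigma)$ satisfies $p^ia=0$, so the set reduces to $\{\infty\}$ and the minimum is $\sigma_i$. When $\sigma_i<\infty$ I would produce an element of $G(\boldsigma)$ attaining height $\sigma_i$ at the $i$--th step by realising a finite admissible indicator: if $\boldsigma$ has finite length I apply the realizability of finite $G$--admissible indicators to get $a$ with $\ind(a)=\boldsigma$; if $\boldsigma$ has infinite length I instead realise the truncation $\boldsigma^{(i)}=(\sigma_0,\dots,\sigma_i,\infty)$, which is again $G$--admissible since its gaps are among the gaps of $\boldsigma$ and so inherit the Ulm condition. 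In either case the resulting $a$ lies in $G(\boldsigma)$, because $\boldsigma\preceq\ind(a)$, and has $\height(p^ia)=\sigma_i$, so the minimum is attained.

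Part (2) is then immediate: if $G(\boldsigma)=G(\boldtau)$ then by (1) both $\sigma_i$ and $\tau_i$ equal $\min\{\height(p^ia)\colon a\in G(\boldsigma)\}$ for every $i$, whence $\boldsigma=\boldtau$. Equivalently, $G(\boldsigma)=G(\boldtau)$ forces $G(\boldtau)\leq G(\boldsigma)$ and $G(\boldsigma)\leq G(\boldtau)$, so $\boldsigma\preceq\boldtau$ and $\boldtau\preceq\boldsigma$ by the order--reversing correspondence, giving $\boldsigma=\boldtau$.

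For (3) and (4) I would first record that $\Phi\colon\boldsigma\mapsto G(\boldsigma)$ is a bijection of $\calI(G)$ onto the set of indicator subgroups $G(\calI)$: it is injective by (2), and surjective because, as in (1), every $G(\boldsigma)$ equals $G(\boldsigma^{*})$ for the admissible indicator $\boldsigma^{*}$ defined by $\sigma^{*}_i=\min\{\height(p^ia)\colon a\in G(\boldsigma)\}$. Since $\boldsigma\preceq\boldtau$ iff $G(\boldtau)\leq G(\boldsigma)$, both $\Phi$ and $\Phi^{-1}$ reverse order, so $\Phi$ is a poset anti--isomorphism, and it suffices to exhibit a lattice structure on one side. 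I would show directly that $\calI(G)$ is closed under the pointwise meet $\bigwedge$ of $\calI$: if $\boldrho=\bigwedge\calS$ has a gap at $i$ with value $\rho_i$, then $\rho_i$ is attained as $\tau_i$ for some $\boldtau\in\calS$ at which the gap persists, so the admissibility of $\boldtau$ forces $u_{\rho_i}\neq0$; together with the maximum $(\infty)$ this makes $\calI(G)$ a complete lattice, and the anti--isomorphism transports the structure to $G(\calI)$. For (4) I would check that $\calI^{<\omega}(G)$ is a sublattice: pointwise meets of finite--entry indicators have finite entries, while the join in $\calI(G)$ of two finite--length indicators has length at most the minimum of their two lengths and hence finite entries, so $\calI^{<\omega}(G)$ is closed under both operations and $\Phi$ restricts to an anti--isomorphism onto $G(\calI^{<\omega})$.

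The main obstacle is the join. The pointwise join of two $G$--admissible indicators need not be $G$--admissible: a jump in $\boldsigma\vee\boldtau$ can create a gap whose value is an interior, non--gap height of one of the two indicators, and such a value may have vanishing Ulm invariant. Consequently the join in $\calI(G)$ cannot be read off coordinatewise; I would instead define it as $\bigwedge$ of the family of admissible indicators dominating both, which is nonempty since $(\infty)$ qualifies. The substance of the argument is precisely the verification that arbitrary pointwise meets preserve $G$--admissibility, which is what legitimises this definition of the join and simultaneously delivers completeness.
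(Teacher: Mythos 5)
Your treatment of (1) and (2) follows the paper's route but is more complete. The paper's proof of (1) only checks that $\sigma_i$ is a lower bound for $\{\height(p^ia)\colon a\in G(\boldsigma)\}$ and then asserts the equality; it never produces an element attaining the bound. You do produce one, by realising the admissible truncation $\boldsigma^{(i)}$, which is exactly the missing step. Part (2) is the same in both.

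In (3) you genuinely diverge from the paper, and you are right to do so: the paper's proof rests on the claim that the pointwise join of finitely many $G$--admissible indicators is $G$--admissible, and that claim is false for precisely the reason you identify. For instance, with $G=\bigoplus_{k\geq 0}\bbZ(p^{2k+1})$ one has $u_j\neq 0$ exactly for $j$ even; the indicators $\boldsigma=(1,2,3,\dots)$ and $\boldtau=(0,3,4,\dots)$ are both $G$--admissible, yet their pointwise join $(1,3,4,\dots)$ has a gap at $0$ with value $1$ and $u_1=0$. The join of $\boldsigma$ and $\boldtau$ in $\calI(G)$ is in fact $(2,3,4,\dots)$, the indicator of the fully invariant subgroup $G(\boldsigma)\cap G(\boldtau)$, and your definition of the join as the meet of the admissible common upper bounds is the correct repair; the pointwise meet, which corresponds to $G(\boldsigma)+G(\boldtau)$, is the unproblematic half.

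One soft spot remains in your verification that pointwise meets preserve admissibility: you assume that a witness $\boldtau\in\calS$ with $\tau_i=\rho_i$ also has a gap at $i$, but this fails when $\tau_{i+1}=\infty$ while $\rho_{i+1}<\infty$ (e.g.\ $(1,\infty)\wedge(2,3,4,\dots)=(1,3,4,\dots)$), since the paper's definition of a gap requires $\sigma_{i+1}<\infty$. This degenerate case is really a defect of the paper's definition of admissibility rather than of your argument: under that definition $(1,\infty)$ is $G$--admissible even when $u_1=0$, and then $G((1,\infty))=pG[p]=p^2G[p]=G((2,\infty))$ already contradicts part (2) and the realisability property you (legitimately) quote. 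Under the usual Kaplansky--Fuchs convention, in which the jump into $\infty$ is also subject to the Ulm--invariant condition, your meet argument closes and the proposal goes through.
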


\begin{proof} 

(1) Let $a\in G(\boldsigma)$. Then for all $i\leq\exp(a), \ \sigma_i\leq\height(p^ia)$. Conversely, if for all $i,\ \sigma_i\leq\height(p^ia)$ then $\sigma\preceq\ind(a)$ so $a\in G(\boldsigma)$. Hence
 for all $i<\omega,\   \min_{a\in G}\{\height(p^ia)\}=\sigma_i$.

(2) Necessity is obvious. Suppose  $G(\boldtau)=G(\boldsigma)$. Then by (1),  for all $i, \tau_i=\min\{\height(p^ia)\colon a\in G(\boldtau)\}=\min\{\height(p^ia)\colon a\in G(\boldsigma)\}=\sigma_i$.

(3)   It is straightforward to verify that if $S$ is a finite  set of $G$--admissible indicators, then $\bigvee S$ and $\bigwedge S$ are $G$--admissible, and $G(\bigvee S)$ and $G(\bigwedge S)$ are fully invariant in $G$. The map $\calI(G)\to G(\calI)$ defined by $\boldsigma\mapsto G(\boldsigma)$ reverses order, and by (2) it is a bijection.  Hence the map is a a lattice anti--tsomorphism.
   
(4) The anti--isomorphism  (3) maps $\calI^{<\omega}(G)$ to $G(\calI^{<\omega})$
\end{proof}
\begin{corollary}\label{indicat}
  Let $G$ be a group.   The mapping $\boldsigma\mapsto G(\boldsigma)$
   is a lattice  anti--isomorphism of   $\calI(G)$ to $G(\calI)$, mapping finite length indicators to bounded groups  and  $\calI^{<\omega}(G)$ to $G(\calI^{<\omega})$.
  \qed\end{corollary}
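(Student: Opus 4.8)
The plan is to read off most of the corollary directly from Proposition~\ref{ndsub}. The assertion that $\boldsigma\mapsto G(\boldsigma)$ is a lattice anti--isomorphism of $\calI(G)$ onto $G(\calI)$ is precisely Proposition~\ref{ndsub}(3), and the assertion that this same map carries $\calI^{<\omega}(G)$ onto $G(\calI^{<\omega})$ is Proposition~\ref{ndsub}(4). Hence the only assertion not already established is that a finite length indicator is sent to a bounded group, and the work of the proof is to verify exactly this. I would also keep carefully distinct the two conditions in play: having \emph{finite length} (finitely many ordinal entries, then $\infty$) is what I need here, and it is not the same as lying in $\calI^{<\omega}(G)$ (all entries finite ordinals), since a finite length indicator such as $(\omega,\infty)$ need not have finite entries.

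To prove the boundedness claim I would argue straight from the definition $G(\boldsigma)=\{a\in G\colon\boldsigma\preceq\ind(a)\}$. If $\boldsigma$ has finite length, then by the definition of length there is an index $N<\omega$ with $\sigma_i=\infty$ for every $i\geq N$ (for the normalised form $(\sigma_0,\dots,\sigma_n,\infty)$ one takes $N=n+1$). Fix any $a\in G(\boldsigma)$. Then $\boldsigma\preceq\ind(a)$ gives, in the $N$--th coordinate, $\infty=\sigma_N\leq\height(p^Na)$, and so $\height(p^Na)=\infty$.

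The single fact I would invoke is that $G$ is reduced, so that $\height(x)=\infty$ forces $x=0$; therefore $p^Na=0$, i.e. $\exp(a)\leq N$. As $a\in G(\boldsigma)$ was arbitrary, $G(\boldsigma)\subseteq G[p^N]$, which is bounded, proving the claim. I expect no genuine obstacle here: once Proposition~\ref{ndsub} is available the corollary contributes only this short boundedness observation, whose one delicate point is remembering that in a reduced group the symbol $\infty$ in an indicator records the value $0$, so that the infinite tail of a finite length indicator annihilates every element of the associated indicator subgroup.
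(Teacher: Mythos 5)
Your proposal is correct and takes the same route the paper intends: the corollary carries no proof (it is stated as immediate), being read off from Proposition~\ref{ndsub}(3) and (4), which is exactly how you organise it. The only genuinely new content is the boundedness of $G(\boldsigma)$ for a finite length $\boldsigma$, and your verification --- the tail of $\infty$'s in $\boldsigma$ forces $p^Na=0$ for every $a\in G(\boldsigma)$, hence $G(\boldsigma)\leq G[p^N]$ --- is precisely the short observation the paper leaves to the reader, with the distinction between finite length and finite entries correctly maintained.
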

  
  \begin{lemma}\label{Lem6}  Let $\boldsigma=(\sigma_i)\in \calI$ and for each $n\in\bbN$ let $\boldsigma^{(n)}$  be the truncation $(\sigma_0,\dots,\sigma_n,\infty)$.
\begin{enumerate}\item $\boldsigma$ is $G$--admissible if and only if each 
 $\boldsigma^{(n)}$ is $G$--admissible;
 \item  $\boldsigma =\inf_{n\in\bbN} \{\boldsigma^{(n)}\}$.
 \end{enumerate}
 \end{lemma}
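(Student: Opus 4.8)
The plan is to handle the two parts independently, since each collapses to a short bookkeeping argument once the relevant definition is unwound. For part (1) I would first reduce $G$-admissibility to the gap condition on Ulm invariants: because the standing hypothesis is that $G$ is unbounded, $\exp(G)$ is not finite and the length clause in the definition of admissibility is vacuous, so admissibility of any member of $\calI$ is governed entirely by the requirement that $u_{\sigma_i}\neq 0$ at every gap. Writing $\sigma^{(n)}_i$ for the entries of $\boldsigma^{(n)}$, we have $\sigma^{(n)}_i=\sigma_i$ for $i\leq n$ and $\sigma^{(n)}_i=\infty$ for $i>n$, so each $\boldsigma^{(n)}$ is again a finite-length indicator. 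The heart of the argument is the observation that $\boldsigma^{(n)}$ has a gap at $i$ precisely when $\sigma^{(n)}_i+1<\sigma^{(n)}_{i+1}<\infty$; the condition $\sigma^{(n)}_{i+1}<\infty$ forces $i+1\leq n$, and in that range the inequality reads $\sigma_i+1<\sigma_{i+1}<\infty$, that is, a gap of $\boldsigma$ at $i$. Thus the gaps of $\boldsigma^{(n)}$ are exactly the gaps of $\boldsigma$ at positions $i<n$, and conversely a gap of $\boldsigma$ at $i$ is a gap of $\boldsigma^{(n)}$ for every $n\geq i+1$. I would then conclude both implications: if $\boldsigma$ is admissible, every gap of every $\boldsigma^{(n)}$ is a gap of $\boldsigma$ and hence has nonzero Ulm invariant, so each $\boldsigma^{(n)}$ is admissible; conversely, if every $\boldsigma^{(n)}$ is admissible, then any gap of $\boldsigma$ at $i$ already occurs in $\boldsigma^{(i+1)}$ and so carries $u_{\sigma_i}\neq 0$, whence $\boldsigma$ is admissible.

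For part (2) I would compute the meet directly from the pointwise formula for $\bigwedge$ in the complete lattice $\calI$ recorded earlier. Since $\sigma^{(n)}_i\geq\sigma_i$ for all $i$ and $n$, each truncation satisfies $\boldsigma\preceq\boldsigma^{(n)}$, so $\boldsigma$ is a lower bound of $\{\boldsigma^{(n)}\}$. For the $i$-th coordinate of the meet, $\min_n\sigma^{(n)}_i$ is attained for every $n\geq i$ with value $\sigma_i$, while $\sigma^{(n)}_i=\infty\geq\sigma_i$ for $n<i$; hence the minimum is $\sigma_i$. Therefore the pointwise meet of the $\boldsigma^{(n)}$ has $i$-th entry $\sigma_i$ for every $i$, which is exactly $\boldsigma$, giving $\boldsigma=\inf_{n\in\bbN}\{\boldsigma^{(n)}\}$.

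I expect the main (indeed essentially the only) obstacle to be the admissibility length clause: for a bounded $G$ of exponent $e$, the truncations $\boldsigma^{(n)}$ with $n$ below $e$ have too short a length and fail to be admissible, so the equivalence in part (1) is genuinely false without the standing assumption that $G$ is unbounded, and I would make that dependence explicit. The remaining points are routine but worth flagging: the boundary case $i=n$, where $\boldsigma^{(n)}$ can never have a gap since $\sigma^{(n)}_{n+1}=\infty$, must be excluded when matching gaps, and one should note that truncation may retain transfinite ordinal entries, which does not disturb the gap computation.
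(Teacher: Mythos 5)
Your proof is correct and follows essentially the same route as the paper: part (1) rests on the observation that the gaps of $\boldsigma^{(n)}$ are exactly the gaps of $\boldsigma$ at positions $i<n$, and part (2) is the pointwise computation of the infimum in $\calI$. Your version is simply more detailed, and your remark that the equivalence in (1) needs $G$ unbounded (so the length clause of admissibility is vacuous) is a worthwhile caveat the paper leaves implicit.
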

 
 \begin{proof} (1) $\boldsigma$ has a gap at $i$ if and only if whenever  $i<n,\ \boldsigma^{(n)}$ has a gap at $i$.
 
 (2) For all $n\in\bbN,\  \boldsigma \preceq  \boldsigma^{(n)} $. Conversely, if for all $n\in\bbN\  \boldtau \preceq  \boldsigma^{(n)}$,  then $\boldsigma\preceq\boldtau$.
 \end{proof}
 
 \begin{corollary}\label{Cor7} If $\boldsigma$ is $G$--admissible, then $G(\boldsigma)=\sum_{n\in\bbN} G(\boldsigma^{(n)})$.
 \qed\end{corollary}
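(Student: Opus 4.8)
The plan is to prove the two inclusions directly, relying only on the definition $G(\boldsigma)=\{a\in G\colon \boldsigma\preceq\ind(a)\}$ together with the fact that every element of a $p$--group has finite exponent; Lemma~\ref{Lem6} then enters only to guarantee that each $\boldsigma^{(n)}$ is again $G$--admissible, so that the subgroups $G(\boldsigma^{(n)})$ are defined.

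For the inclusion $\sum_{n\in\bbN}G(\boldsigma^{(n)})\leq G(\boldsigma)$, I would first observe that $\boldsigma\preceq\boldsigma^{(n)}$ for every $n$, since $\boldsigma^{(n)}$ agrees with $\boldsigma$ in positions $0,\dots,n$ and equals $\infty$ in all later positions. By property (4) of $\calI(G)$ (equivalently, by Proposition~\ref{ndsub}), this yields $G(\boldsigma^{(n)})\leq G(\boldsigma)$ for each $n$. As $G(\boldsigma)$ is a subgroup, it contains the sum of the $G(\boldsigma^{(n)})$, and this inclusion is settled.

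The reverse inclusion is where the finiteness of exponents does the work. Given a nonzero $a\in G(\boldsigma)$, I would set $\exp(a)=n+1$, so that $\ind(a)=(\height(p^ia)\colon i\in[0,n],\infty)$; in particular the $i$th entry of $\ind(a)$ is $\infty$ for every $i>n$, because $p^ia=0$ there. I then check that $\boldsigma^{(n)}\preceq\ind(a)$: for $i\leq n$ the $i$th entry of $\boldsigma^{(n)}$ is $\sigma_i$, and $\sigma_i\leq\height(p^ia)$ because $a\in G(\boldsigma)$ means $\boldsigma\preceq\ind(a)$; for $i>n$ both entries equal $\infty$. Hence $a\in G(\boldsigma^{(n)})$, so $a$ already lies in a single summand. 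Since $a$ was arbitrary (and $0$ lies in every summand), $G(\boldsigma)\leq\sum_{n\in\bbN}G(\boldsigma^{(n)})$, and combining with the first inclusion gives equality.

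There is no deep obstacle here; the single point that needs care is the alignment of the tails, namely that the truncation index can be chosen equal to the \emph{finite} exponent of $a$, so that the trailing $\infty$'s of $\boldsigma^{(n)}$ are matched by the trailing $\infty$'s of $\ind(a)$. This is exactly the subgroup--level counterpart of the identity $\boldsigma=\inf_{n\in\bbN}\boldsigma^{(n)}$ from Lemma~\ref{Lem6}(2). One could alternatively deduce the corollary by feeding that identity through the lattice anti--isomorphism of Proposition~\ref{ndsub}, but that route first requires identifying the join in $G(\calI)$ with the subgroup sum, so I prefer the self--contained element--wise argument above.
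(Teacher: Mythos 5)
Your proof is correct. The paper states this corollary without proof (as an immediate consequence of Lemma~\ref{Lem6}), and your element-wise argument --- in particular the key observation that for $a\in G(\boldsigma)$ of exponent $n+1$ the trailing $\infty$'s of $\ind(a)$ absorb those of $\boldsigma^{(n)}$, so $a$ lies in a single summand $G(\boldsigma^{(n)})$ --- supplies exactly the justification the paper leaves implicit.
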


         \begin{proposition}\label{sep} Let $G$ be unbounded with basic subgroup $B$.
       \begin{enumerate}
         \item $G/p^\omega G$ has a basic subgroup isomorphic to $B$;
        \item $\calI^{<\omega}(G)= \calI(G/p^\omega G)=\calI(B)$.
        \end{enumerate}
        \end{proposition}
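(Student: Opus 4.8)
The plan is to prove (1) by carrying $B$ into the quotient and recognising its image as a basic subgroup there, and then to read off (2) from the observation that $G$, its basic subgroup $B$, and the separable quotient $G/p^\omega G$ are all unbounded and share the same finite Ulm invariants.

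For (1) write $H=G/p^\omega G$ with canonical map $\pi\colon G\to H$. First I would note $B\cap p^\omega G=\bigcap_{n<\omega}(B\cap p^nG)=\bigcap_{n<\omega}p^nB=p^\omega B=0$, the last equality because a $\Sigma$--cyclic group has $p^\omega B=0$ and the others because $B$ is pure (so $B\cap p^nG=p^nB$). Hence $\pi|_B$ is injective and $\overline B:=\pi(B)=(B+p^\omega G)/p^\omega G\cong B$ is $\Sigma$--cyclic. Next I would check purity of $\overline B$ in $H$: an element of $p^kH\cap\overline B$ is $\pi(b)$ for some $b\in B$ with $b\in p^kG+p^\omega G=p^kG$ (as $k<\omega$), so $b\in p^kG\cap B=p^kB$ and the element lies in $\pi(p^kB)=p^k\overline B$. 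Finally $H/\overline B\cong G/(B+p^\omega G)$ is a quotient of the divisible group $G/B$, hence divisible. Thus $\overline B$ is a pure $\Sigma$--cyclic subgroup of $H$ with divisible quotient, i.e. a basic subgroup of $H$, and by uniqueness the basic subgroup of $H$ is isomorphic to $B$.

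For (2) I would first record that $G$ unbounded forces $B$ and $H$ unbounded (the latter contains $\overline B\cong B$), so the length clause in the definition of admissibility is vacuous for all three groups; and that, since $B$ is a basic subgroup of $G$ and, by (1), of $H$, one has $u_n(G)=u_n(B)=u_n(H)$ for every finite $n$ \cite[Chapter 10]{Fuchs}. For an indicator with finite entries every gap sits at a finite ordinal, so its gap clause refers only to these common finite Ulm invariants; combined with the vacuous length clause this shows that the notions of $G$--, $B$-- and $H$--admissibility coincide on finite--entry indicators.

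It remains to confirm that $\calI(B)$ and $\calI(H)$ consist solely of finite--entry indicators, so that the three equivalences assemble into $\calI^{<\omega}(G)=\calI(B)=\calI(H)$. For the $\Sigma$--cyclic group $B$ this is the cited property that $\Sigma$--cyclic groups admit only finite--entry indicators. For the separable $H$, were some entry $\sigma_j\geq\omega$ with $\sigma_j<\infty$, the truncation $\boldsigma^{(j)}$ would be $H$--admissible by Lemma \ref{Lem6} and hence, being a finite admissible indicator, realised as $\ind(a)$ for some $a\in H$, forcing $0\neq p^ja\in p^\omega H=0$, a contradiction; so every entry of an $H$--admissible indicator is finite. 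I expect the main obstacle to be the two technical cruxes underneath this bookkeeping: the purity of $\overline B$ in (1) and the equality of finite Ulm invariants across $G$, $B$ and $H$ in (2); once these are secured the admissibility comparisons are routine.
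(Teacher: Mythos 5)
Your argument is correct, and for part (1) it takes a genuinely different (more hands-on) route than the paper. The paper invokes the canonical decompositions $G=\bigoplus_{i\leq n}B_i\oplus\bigl(\bigoplus_{i>n}B_i+p^nG\bigr)$ and observes that they pass to $G/p^\omega G$, which yields at once that the image of $B$ is pure with divisible quotient; you instead verify the definition of a basic subgroup directly, first showing $B\cap p^\omega G=p^\omega B=0$ so that $\pi|_B$ is injective, then checking purity of $\ov B=\pi(B)$ by hand and divisibility of $H/\ov B$ as a quotient of $G/B$. Your version is longer but makes explicit the purity step that the paper leaves implicit in the decomposition, and it isolates exactly where purity of $B$ in $G$ is used. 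For part (2) the paper's proof is only a chain of equivalences citing the earlier-listed facts about admissibility; your write-up supplies the content behind those citations: the coincidence of the finite Ulm invariants of $G$, $B$ and $H$, the vacuity of the length clause for unbounded groups, and, most usefully, an actual argument (via truncation and realisation of finite admissible indicators as $\ind(a)$, giving $0\neq p^ja\in p^\omega H=0$) that a separable group admits only finite-entry indicators. Both approaches are sound; yours buys self-containedness at the cost of length, while the paper's leans on the structure theory of basic subgroups already recorded in \S 2.
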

        
        \begin{proof}   
 (1)    For all $n\in\bbN,\ G=\bigoplus_{i\leq n}B_i \oplus(\bigoplus_{i>n}B_i + p^nG)$. Hence for all $n\in\bbN,\ G/p^\omega G=\bigoplus_{i\leq n}B_i \oplus(\bigoplus_{i>n}B_i + p^nG)/p^\omega G$. Thus $G/p^\omega G$ contains for all $n<\omega,\   \bigoplus_{i\leq n}B_i + p^n(G/p^\omega G)$ so $G/p^\omega G$ has $B$ as a subgroup with divisible factor group.  Consequently, $G/p^\omega G$ has a basic subgroup isomorphic to $B$.

    (2)        
    $\boldsigma\in \calI^{<\omega}(G)$ if and only if $\boldsigma$ is a $G$--admissible sequence with finite entries if and only if $\boldsigma$ is a $B$--admissible sequence if and only if $\boldsigma$ is a $G/p^\omega G$--admissible sequence
         \end{proof}

\subsection{Separable Groups}  
A group of length $\leq\omega$,  i.e., either bounded or unbounded   with no elements of infinite height, is called \textit{separable}. 
    Let $G$ be a separable group with basic subgroup $B=B(\boldn,\boldm)=\bigoplus_{i<\omega}B_{n_i}$.  It is shown in \cite[Chapter 10, \S3]{Fuchs} that
 if $G$ is unbounded, the  $p$--adic  completion of $B$ is a subgroup of $\prod_{i<\omega}B_{n_i}$ whose  torsion subgroup    $\ov B$    is a $p$--group consisting of   all bounded sequences in   $\prod_{i<\omega}B_{n_i}$. $\ov B$ is   called the \textit{torsion completion   of $B$}. If $|B|=\mu$, then $|\ov B|=2^\mu$ and $\ov B/B$ is divisible of rank $2^\mu$.

Throughout this section, we represent elements of $\ov B$ as bounded sequences $\boldb=(b_i\colon i\in\bbN)\in\prod_{i\in\bbN}B_{n_i}$ where $b_i\in B_{n_i}$.

 Fuchs \cite[Chapter 10]{Fuchs} shows   that    the following properties are equivalent:
  
 \begin{enumerate} \item  $G$ is separable;
 \item   every finite subset of $G$ is contained in a finite direct summand;  
 \item  $B\leq G\leq \ov B$, $G$ is pure in $\ov B$ and $\ov B/G$ is divisible.
 
 \end{enumerate}
Consequently,  a separable   $G$ with basic subgroup $B$ is   simply presented if and only if  $G=B$.

On the other hand, separable groups are {fully transitive} (\cite[Chapter b10, Corollary 1.5]{Fuchs}).

Consequently, by Kaplansky's Theorem, \cite[Chapter 10, Theorem 2.2]{Fuchs}, every fully invariant subgroup $H=G(\boldsigma)$ for some $\boldsigma\in\calI(G)=\calI^{<\omega}(G)$.  
  
Countable separable groups are $\Sigma$--cyclic,   but    uncountable separable groups  are notoriously complicated; as Fuchs says,   \cite[page 316]{Fuchs},  no general structure theorem is available, and none is expected. 
In particular,    Corner \cite{Corn} shows that even in the simplest case when each $B_n\cong \bbZ(p^n)$ there exists a family of $2^{2^{\aleph_0}}$ non--isomorphic pure subgroups $G$  of $\ov B$ containing $B$. 

This result holds for any $|B|\leq 2^{\aleph_0}$;   for larger $|B|$,   Shelah has shown, \cite[p. 332]{Fuchs}, that for every infinite cardinal $\kappa$ there are $2^\kappa$ non--isomorphic separable groups   of cardinal $\kappa$ with the same basic subgroup.

\begin{lemma}\label{aresep} For every group $G$,
  $G/p^\omega G$ is separable.
 \end{lemma}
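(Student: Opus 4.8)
The plan is to unwind the definition of separability given in this subsection: $G/p^\omega G$ is separable precisely when it has length at most $\omega$, equivalently when it has no nonzero elements of infinite height, i.e. when $p^\omega(G/p^\omega G)=0$. So the entire task reduces to computing the subgroup of elements of infinite height in the quotient $H:=G/p^\omega G$ and verifying that it vanishes.

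First I would identify the finite--height subgroups of $H$. For any subgroup $K\leq G$ one has $p^n(G/K)=(p^nG+K)/K$. Taking $K=p^\omega G$ and using that $p^\omega G=\bigcap_{m<\omega}p^mG\leq p^nG$ for every $n$, the sum $p^nG+p^\omega G$ collapses to $p^nG$, so $p^nH=p^nG/p^\omega G$ for all $n<\omega$.

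It then remains to intersect over $n$. A coset $x+p^\omega G$ lies in $p^nH=p^nG/p^\omega G$ for every $n$ if and only if $x\in p^nG$ for every $n$, i.e. $x\in\bigcap_n p^nG=p^\omega G$, whence $x+p^\omega G=0$. Thus
$$p^\omega H=\bigcap_{n<\omega}p^nH=\Big(\bigcap_{n<\omega}p^nG\Big)\big/p^\omega G=p^\omega G/p^\omega G=0,$$
so $H$ has length $\leq\omega$ and is therefore separable.

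There is no genuine obstacle here; the one point requiring a little care is the interchange of intersection and quotient in the last display, namely that the intersection of the images $p^nG/p^\omega G$ equals the image of $\bigcap_n p^nG$. This is exactly what the containment $p^\omega G\leq p^nG$ guarantees, since it forces every coset lying in all the $p^nG/p^\omega G$ to have a representative common to all the $p^nG$. (I would also remark that the vanishing of $p^\omega H$ makes $H$ automatically reduced, a nonzero divisible $p$--group having elements of infinite height, so no separate reducedness check is needed.)
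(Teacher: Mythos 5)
Your proof is correct and rests on exactly the same observation as the paper's, namely that $p^\omega G\leq p^nG$ for every $n$, so that heights computed in $G/p^\omega G$ lift to heights in $G$ and $\bigcap_n p^n(G/p^\omega G)=p^\omega G/p^\omega G=0$. The paper phrases this as a proof by contradiction with explicit elements ($a-p^nb_n=c_n=p^nd_n$, hence $a\in p^nG$ for all $n$), whereas you carry out the identical computation at the level of subgroups; the content is the same.
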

\begin{proof}   By way of contradiction, let $0\ne a\in G\setminus p^\omega G$ such that the height of $a+ p^\omega G$ in $G/p^\omega G$ is infinite.   
Then for all $n\in\bbN$ there exist $ b_n$  and $c_n$ in $G$ such that   $a-p^nb=c_n\in p^\omega G$. Hence there exist $d_n\in G$ such that $p^nd_n=c_n$, so  for all $n,\ a=p^n(b_n+d_n)\in p^n G$, so $a\in p^\omega G$, a contradiction.
 \end{proof}

 \section{The endomorphism ring of $G$}

Our current knowledge of the structure of the endomorphism ring $\calE$ of an abelian  $p$--group $G$ is presented in \cite[Chapter 4, \S 20]{KMT} which mainly draws on the results presented in \cite{Fuchs} and \cite{Pierce}.
For the particular case of separable $G$, see \cite{Corn} and \cite{String}.
These references  are mainly concerned with identifying the Jacobson radical $\calJ$ of $\calE$ and the factor ring $\calE/\calJ$. The  main structure theorem  states that  if $G$ has non--zero Ulm invariants $(u_\kappa\colon \kappa<\lambda)$  then $\calE$ is  the split extension of the Jacobson radical  $\calJ$ of $\calE$ by a subring of the  direct product $\prod_{i<\lambda}\calL(u_i)$ where $\calL(u_i)$ is the ring of linear transformations of the $\bbF_p$--space of dimension  $u_i$.  $\calJ$ is charatacterised  as the $p$--adic closure of the ideal of endomorphisms which raise  heights in the socle, or alternatively as the ideal of endomorphisms $f$ for which the restriction of $1-f$ to the socle  $G[p]$   is monic and epic.

The article \cite{AvS} contains an explicit description, in terms of cardinal invariants,  of the ideal lattice of $\calE$ in   the bounded case. Let $G=\bigoplus_{i\in [s]}B_i$ where $B_i$ is homocyclic of exponent $n_i$ and rank $m_i$. Then  $\calE$ is represented as the matrix ring $(\calE_{ij})$ where $\calE_{ij}=\Hom(B_i,B_j)$. The  main theorem \cite[Theorem 4.5]{AvS}  shows that an ideal $I$ of $\calE$  is a matrix ring $(I_{ij})$ where $I_{ij}$ is a subgroup of $\calE_{ij}$ 
of the form $I_{ij}=\calE_{i,j}(\boldk_{ij}, \boldmu_{ij})=\sum_{u\in[0.t]}p^{k_{ij}(u)}\calE_{ij}^{\mu_{ij}(u)}$. Here $\calE_{ij}^{\mu_{ij}(u)}=\{f\in \calE_{ij}\colon \rank(f)\leq\mu_{ij}(u)\},\ 
\boldk_{ij}=(k_{ij}(0),\dots, k_{ij}(t_{ij}))$ is an increasing  sequence of positive integers and $\boldmu_{ij}=(\mu_{ij}(0),\dots,\mu_{ij}(t_{ij}))$ a non--increasing sequence of cardinals. The sequences $\boldk_{ij}$  and $\boldmu_{ij}$ satisfy a system of inequalitites determined by the parameters $n_i$ and $m_i$ of the subgroups $B_i$.  
 
Thus the  ideals of $\calE$ are characterised by an $s\times s$ matrix whose terms are homomorphism groups parametrised by three variables: height, exponent and rank. 
This result is extended in \cite{AbS} to separable reduced $p$--groups by using row convergent $\omega\times\omega$ matrices $(I_{ij})$ with similar parameters. 
 
The aim of the rest of this paper is to classify the  ideals of  the torsion ideal $\boldt$ of $\calE$      for arbitrary  unbounded $G$. Instead of representing $\boldt$ by matrices $(\Hom (B_i,B_j)$ and its ideals by matrices $(I_{ij})$   as in \cite{AvS} and \cite{AbS}, we characterise    ideals    of    $\boldt$  by indicators and ranks.  When applied to bounded groups, this is a new, co--ordinate free version of the results of   \cite{AvS} and  \cite{AbS}.

\section{The torsion ideal of $\calE$}

Let $G$ be an unbounded group with  endomorphism ring $\calE$. It is well known (\cite[Chapter 1, Lemma 8.3]{Fuchs}) that $G$ is a $p$--adic module under the natural action of the $p$--adic integers and $\calE$ is its $p$--adic endomorphism ring.  Let  $\boldt$ be the set  all endomorphisms $f\in\calE$ of finite order. It is routine to check that $\boldt$ is an ideal  of $\calE$, called the
\textit{torsion ideal}. Since $\calE$ is a reduced mixed $p$--adic algebra,
 $\boldt$ is a reduced unbounded $p$--group.. 
Henceforth, to simplify the exposition, we shall consider   $\boldt$   either as  an abelian  $p$--group  or  as an   ideal of $\calE$, according to context.  

  In  the rest of this section,   $G$ is an unbounded group with basic subgroup $B(\boldn,\,\boldm)$ and   endomorphism ring $\calE$ with torsion ideal $\boldt$.
 
 \begin{lemma}\label{Ei1}  
If $f\in\calE$ has infinite $p$--height, then $f=0$.
\end{lemma}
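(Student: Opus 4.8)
The plan is to exploit the key fact about basic subgroups recorded in property (3) of the standard presentation: any endomorphism $f\in\calE$ is completely determined by its restriction to a basic subgroup $B$, and in particular if $f$ vanishes on $B$ then $f=0$. So it suffices to show that an $f$ of infinite $p$--height (as an element of the $p$--group $\boldt$, equivalently as an element of $\calE$ under the $p$--adic topology) must kill every generator of $B$.

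First I would fix a generator $a$ of one of the cyclic summands $B_i$, so that $\exp(a)=n_i$ is finite. Since $f$ has infinite $p$--height in $\calE$, for each $k\in\bbN$ there exists $g_k\in\calE$ with $f=p^kg_k$. Evaluating at $a$ gives $af=p^k(ag_k)$, so $af\in p^k G$ for every $k$, whence $af\in\bigcap_{k<\omega}p^kG=p^\omega G$. The heart of the argument is to upgrade this to $af=0$. The natural route is to use that $a$ has finite exponent $n_i$: I would show that an element of $p^\omega G$ cannot be a nonzero multiple-controlled image under a fixed $f$ when $a$ is homogeneous of finite order, by combining the finite height jumps forced along $a,pa,\dots,p^{n_i-1}a$ with the infinite-height condition, thereby forcing $af=0$.

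The cleanest way to make this rigorous is to pass to the height argument directly. Because $f=p^kg_k$ for all $k$, we have $\height(af)\ge k+\height(ag_k)\ge k$ for every $k$, so $\height(af)=\infty$, i.e.\ $af\in p^\omega G$. But $\exp(af)\le\exp(a)=n_i$ is finite, so $af$ is an element of finite order lying in $p^\omega G$. Now I invoke purity: $B$ is pure in $G$, and the image $af$ lives in $G$ with bounded order; since $af\in p^\omega G[p^{n_i}]$, I can use that $p^\omega G$ has no elements that are simultaneously of finite nonzero order and of infinite height relative to the whole group in the sense needed — more precisely, I would argue that if $0\ne af\in p^\omega G$ with $\exp(af)=j\le n_i$, then $p^{j-1}(af)$ is a nonzero socle element of infinite height, and by iterating the divisibility $f=p^kg_k$ one forces this socle element to have height exceeding every finite bound, which is consistent with $p^\omega G[p]$ but must then be reconciled with the finite-entry behaviour. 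This step is the main obstacle.

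I expect the genuine difficulty to be precisely this last reconciliation, because $p^\omega G[p]$ need not be trivial, so "infinite height of $af$'' alone does not immediately give $af=0$. The resolution I would pursue is to use the \emph{uniform} divisibility of $f$ itself rather than of its image: from $f=p^kg_k$ one gets not just that $af$ is divisible, but that $f$ restricted to the \emph{finite} summand $B_i$ factors through arbitrarily high powers of $p$ as a map into $G$, and a map from a cyclic group $\bbZ(p^{n_i})$ of fixed finite exponent into $G$ that is divisible by $p^k$ for all $k>n_i$ must be the zero map, since $p^{n_i}(ag_k)$-type relations force $ag_k$ itself to have order dividing $p^{n_i}$ and height $\ge k-n_i\to\infty$, so $ag_k\to 0$ $p$--adically and hence $af=p^{n_i}g_{n_i+\text{(shift)}}$ collapses. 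Concluding $af=0$ for every generator $a$ of $B$, property (3) of basic subgroups then yields $f=0$.
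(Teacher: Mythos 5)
Your proof starts in the right place --- write $f=p^kg_k$ and evaluate at an element $a$ --- but then takes a wrong turn that the paper's one--line argument avoids entirely. The observation you are missing is that $g_k$ is additive, so $p^k(ag_k)=(p^ka)g_k$; taking $k=\exp(a)$ gives $af=(p^ka)g_k=0\cdot g_k=0$ outright, for \emph{every} $a\in G$. There is no need to restrict to generators of a basic subgroup, no need to pass to $p^\omega G$, and the issue you yourself flag as ``the main obstacle'' --- that $p^\omega G[p]$ may be nonzero, so $\height(af)=\infty$ does not force $af=0$ --- never arises. You manufactured that obstacle by attaching the factor $p^k$ to the image $ag_k$ (yielding only divisibility of $af$) instead of moving it across onto the argument $a$ (yielding $af=0$ exactly).

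The attempted repair in your final paragraph does not close the gap. The assertion that $ag_k$ has height at least $k-n_i$ is unjustified --- $ag_k$ could perfectly well be a generator of a cyclic summand --- and the concluding claim that the expression for $af$ ``collapses'' is not an argument. Ironically, the one correct computation in that paragraph, namely that $ag_k$ has order dividing $p^{n_i}$ because $p^{n_i}(ag_k)=(p^{n_i}a)g_k=0$, is precisely the identity you should have applied to $af$ itself with $k=n_i$. So the proposal as written has a genuine gap, although it is repaired by a single line, which is exactly the proof the paper gives.
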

 \begin{proof}    For all $n\in\bbN$, let $g_n\in\calE$ such that $f=p^ng_n$, and let $a\in G$. If $\exp(a)=n$ then   $af=p^nag_n=0$.
\end{proof}
\begin{corollary}\label{separ} $\boldt$ is a separable abelian $p$--group.
\end{corollary}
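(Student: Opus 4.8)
The goal is to show that $\boldt$, viewed as an abelian $p$--group, is separable, i.e.\ has length $\leq\omega$, which by the definition of separability means $\boldt$ has no nonzero elements of infinite height. The plan is to deduce this directly from Lemma~\ref{Ei1}, which states that any endomorphism $f\in\calE$ of infinite $p$--height is zero. Since $\boldt$ is a subgroup (indeed ideal) of the $p$--adic module $\calE$, and heights computed in $\boldt$ are at least the heights computed in the ambient module $\calE$, it suffices to rule out nonzero elements of infinite height in $\boldt$.

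First I would recall that $\boldt$, being the torsion ideal of $\calE$, is by the preceding discussion a reduced unbounded $p$--group, so the only issue is the length. Next I would take any $f\in\boldt$ of infinite $p$--height \emph{in $\boldt$}, and observe that its height in the larger group $\calE$ is at least as large, hence also infinite; that is, for every $n\in\bbN$ there exists $g_n\in\calE$ with $f=p^ng_n$. By Lemma~\ref{Ei1} this forces $f=0$. Therefore $p^\omega\boldt=0$, so $\boldt$ has length $\leq\omega$ and is separable.

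The only point requiring a word of care is the comparison of heights: an element's height in a subgroup can exceed its height in the ambient group, but the inequality always runs the \emph{safe} way here, since infinite height in $\boldt$ implies infinite height in $\calE$ (a fortiori $f\in p^n\calE$ for all $n$). Thus no genuine obstacle arises; the corollary is essentially an immediate restatement of Lemma~\ref{Ei1} together with the definition of a separable group as one of length at most $\omega$, equivalently one whose first Ulm subgroup $p^\omega$ vanishes.
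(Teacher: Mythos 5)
Your proof is correct and is essentially the paper's own argument: Lemma \ref{Ei1} rules out nonzero endomorphisms of infinite height in $\calE$, and since $p^n\boldt\subseteq p^n\calE$, an element of infinite height in $\boldt$ has infinite height in $\calE$ and is therefore zero, so $p^\omega\boldt=0$. One small caveat: in two places you state the height comparison backwards (an element's height in a subgroup is always at \emph{most} its height in the ambient group, never more), but the implication you actually invoke --- infinite height in $\boldt$ forces infinite height in $\calE$ --- runs in the correct direction, so the argument stands.
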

\begin{proof} If $f\in\boldt$, then $f$ has $p$-power order and finite height in $\calE$ so  finite height in $\boldt$.
\end{proof}

\subsection{The abelian group invariants of $\boldt$}
As a  separable $p$--group, $\boldt$ has admissible indicators which by Lemma \ref{Ei1} have only finite entries. Let $\calI(\boldt)$ be the lattice of $\boldt$--admissible indicators.

\begin{lemma}\label{preser}  Let  $G$ be an unbounded    group and    $f\in\boldt$. Then $f\in   p^n\boldt[p^i]$ if and only if $Gf\leq p^nG[p^i]$. 

\end{lemma}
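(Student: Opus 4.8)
The plan is to treat the two implications separately, after recording the elementary identities $(p^n\boldt)[p^i]=p^n(\boldt[p^{n+i}])$ and $(p^nG)[p^i]=p^n(G[p^{n+i}])$. Both hold for the same reason: if $f=p^nh$ then $p^if=p^{n+i}h$, so $p^if=0$ is equivalent to $p^{n+i}h=0$, and likewise $p^ic=p^{n+i}a$ for $c=p^na$ in $G$. I shall use these to pass freely between the two descriptions of each side.

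For the forward implication, suppose $f\in p^n\boldt[p^i]$, so $f=p^nh$ with $h\in\boldt$ and $p^{n+i}h=0$. Since $af=p^n(ah)$ for every $a\in G$, we have $Gf=p^n(Gh)$, and $p^{n+i}h=0$ forces $Gh\le G[p^{n+i}]$; hence $Gf\le p^n(G[p^{n+i}])=p^nG[p^i]$. This direction is routine.

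The substance is the converse. Given $Gf\le p^nG[p^i]$ it suffices to construct $h\in\calE$ with $p^nh=f$, for then $p^{n+i}h=p^if=0$, so $h$ has finite order, $h\in\boldt$, and $f\in p^n\boldt[p^i]$. From $Gf\le G[p^i]$ we get $p^if=0$, hence $p^iG\le\ker f$, and $f$ factors as $f=\ov f\,\pi$, where $\pi\colon G\to\ov G:=G/p^iG$ is the canonical map and $\ov G\,\ov f=Gf\le p^nG[p^i]$. As $\ov G$ is bounded it is $\Sigma$--cyclic, say $\ov G=\bigoplus_\alpha\la\ov e_\alpha\ra$ with $\ord(\ov e_\alpha)=p^{j_\alpha}$. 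For each $\alpha$ I wish to choose $d_\alpha\in G$ with $p^nd_\alpha=\ov e_\alpha\,\ov f$ and $\ord(d_\alpha)\le p^{j_\alpha}$; setting $\ov h(\ov e_\alpha)=d_\alpha$ and extending additively then gives a homomorphism $\ov h\colon\ov G\to G$, and $h:=\ov h\,\pi$ is an endomorphism of $G$ with $p^nh=p^n\ov h\,\pi=\ov f\,\pi=f$. Mapping out of the quotient $\ov G$ in this way sidesteps entirely the usual difficulty of extending a map defined on a basic subgroup to all of $G$.

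The whole argument turns on the choice of the roots $d_\alpha$, and this is where I expect the real work to lie. I need each value $\ov e_\alpha\,\ov f$, which a priori lies only in $(p^nG)[p^i]=p^n(G[p^{n+i}])$, to be realised as $p^nd_\alpha$ with $\ord(d_\alpha)\le\ord(\ov e_\alpha)$, since only an order--compatible root makes $\ov h$ well defined on $\la\ov e_\alpha\ra$. In other words, the crux is to show that $f$ lowers orders by the same power $p^n$ by which it raises heights, i.e.\ that the image of an element of order $p^{j}$ is a $p^n$--th multiple of an element of order at most $p^{j}$, and it is precisely here that the hypothesis $Gf\le p^nG[p^i]$ must be used in full force rather than merely through the containment $Gf\le p^nG$. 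Once this order--compatibility is secured, additivity of $\ov h$ and the identity $p^nh=f$ complete the argument.
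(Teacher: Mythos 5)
Your forward implication is correct and is essentially the paper's (one--line) argument made precise. The converse is where the genuine gap lies, and you have located it yourself without closing it: your construction of $\ov h$ hinges entirely on finding, for each generator $\ov e_\alpha$ of $G/p^iG$ of exponent $j_\alpha$, an element $d_\alpha\in G$ with $p^nd_\alpha=\ov e_\alpha\,\ov f$ \emph{and} $\exp(d_\alpha)\leq j_\alpha$, and you defer precisely this step as ``the real work.'' That step cannot be carried out, because the converse implication is false as stated. Take $G=\la a\ra\oplus\la b\ra\oplus C$ with $\exp(a)=1$, $\exp(b)=2$ and $C$ unbounded, and let $f$ send $a$ to $pb$ and annihilate $b$ and $C$. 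Then $pf=0$, so $f\in\boldt$, and $Gf=\la pb\ra\leq pG[p]$, so the hypothesis holds with $n=i=1$; but if $f=pg$ for some $g\in\calE$, then $af=p(ag)$ with $\exp(ag)\leq\exp(a)=1$, forcing $af=0\neq pb$. Hence $f\notin p\calE$, let alone $p\boldt[p]$. The hypothesis $Gf\leq p^nG[p^i]$ constrains only the image of $f$ as a subgroup; it carries no information about the exponents of the elements being mapped, which is exactly the order--compatibility your roots $d_\alpha$ would need.

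For comparison, the paper's own ``converse'' in fact proves a different statement: given $a\in p^nG[p^i]$, write $a=p^nc$ with $\exp(c)\leq n+i$, choose a cyclic summand $\la b\ra$ of exponent $>n+i$, send $b$ to $c$, extend by zero, and multiply by $p^n$ to obtain \emph{some} $f\in p^n\boldt[p^i]$ with $bf=a$. That establishes the image equality $\im\left(p^n\boldt[p^i]\right)=p^nG[p^i]$, which is what is actually invoked later (Proposition \ref{ip}), and it is true. So the correct repair is not to complete your quotient--and--lift construction but to weaken the converse to this image statement; your strategy is aimed at a stronger claim that fails.
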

 
\begin{proof} For all $a\in G,\ af\in p^nG[p^i]$.  Conversely, let   $a\in  p^nG[p^i]$. Since there exists a cyclic summand $\la b\ra$ of $G$ of   exponent $>n+i$, there exists $f\in p^n\boldt[p^i]$ such that $bf=a$.
\end{proof}
\begin{proposition}\label{ip}Let  $G$ be a group with endomorphism ring $\calE$ and torsion ideal $\boldt$. The following are equivalent:
 \begin{enumerate}
  \item   $G$  has a cyclic summand of exponent $n$;
   \item   $\boldt$ has an idempotent generated cyclic summand of exponent $n$;
 \item $\boldt$ has a cyclic summand of exponent $n$.

\end{enumerate}   \end{proposition}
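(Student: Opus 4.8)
The plan is to prove the cycle $(1)\Rightarrow(2)\Rightarrow(3)\Rightarrow(1)$, of which $(2)\Rightarrow(3)$ is immediate (simply forget that the generating element is idempotent). The two substantive implications both rest on one bookkeeping principle, valid for every reduced $p$--group $H$: \emph{$H$ has a cyclic direct summand of exponent $n$ if and only if $u_{n-1}(H)=\rank(p^{n-1}H[p]/p^nH[p])\ne 0$.} Indeed, a generator $a$ of such a summand satisfies $\height_H(p^ja)=j$ for $0\le j\le n-1$, so $p^{n-1}a$ is a socle element of height exactly $n-1$; conversely, given $x\in p^{n-1}H[p]\setminus p^nH[p]$ and any $c\in H$ with $p^{n-1}c=x$, the two--sided inequality $j\le\height_H(p^jc)\le\height_H(p^{n-1}c)-(n-1-j)=j$ forces $\ind(c)=(0,1,\dots,n-1,\infty)$, so $\la c\ra$ is pure and, being bounded, a direct summand. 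I will apply this both to $G$ and, via Corollary~\ref{separ}, to the separable group $\boldt$.

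For $(1)\Rightarrow(2)$ I would write $G=\la b\ra\oplus K$ with $\exp(b)=n$ and let $e\in\calE$ be the projection onto $\la b\ra$ along $K$. Then $e$ is idempotent, $Ge=\la b\ra$ is $p^n$--bounded, so $e\in\boldt$ with $\exp(e)=n$. To see that $\la e\ra$ is a direct summand of the group $\boldt$, I will compute its heights via Lemma~\ref{preser}: since $\exp(p^je)=n-j$, the element $p^je$ lies in $p^k\boldt$ if and only if it lies in $p^k\boldt[p^{n-j}]$, which by Lemma~\ref{preser} holds if and only if $G(p^je)=\la p^jb\ra\le p^kG[p^{n-j}]$, i.e. if and only if $k\le\height_G(p^jb)=j$ (heights in the summand $\la b\ra$ agreeing with heights in $G$). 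Hence $\height_{\boldt}(p^je)=j$ for $0\le j\le n-1$, so $\la e\ra$ is a pure, and therefore direct, cyclic summand of exponent $n$ generated by the idempotent $e$.

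The substantive step is $(3)\Rightarrow(1)$, which amounts to transferring non--vanishing of the $(n-1)$--Ulm invariant from $\boldt$ to $G$. Starting from a cyclic summand of $\boldt$ of exponent $n$ with generator $f$, the element $f_0:=p^{n-1}f$ satisfies $\height_{\boldt}(f_0)=n-1$ and $\exp(f_0)=1$, so $f_0\in p^{n-1}\boldt[p]\setminus p^n\boldt[p]$ and $u_{n-1}(\boldt)\ne 0$. Applying Lemma~\ref{preser} with $i=1$ gives $Gf_0\le p^{n-1}G[p]$ but $Gf_0\not\le p^nG[p]$, so some $af_0$ lies in $p^{n-1}G[p]\setminus p^nG[p]$, whence $u_{n-1}(G)\ne 0$; the principle of the first paragraph then delivers a cyclic summand of $G$ of exponent $n$, closing the cycle. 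I expect the main obstacle to be the principle itself rather than these translations: the delicate point is verifying that a socle element of height exactly $n-1$ lifts to an element whose indicator is pinned to the minimal value $(0,1,\dots,n-1,\infty)$, which is exactly where the two--sided height inequality does the work; the remaining passages between $\boldt$ and $G$ are routine once Lemma~\ref{preser} is in hand.
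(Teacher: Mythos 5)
Your proof is correct and follows essentially the same route as the paper: $(1)\Rightarrow(2)$ via the projection onto the given cyclic summand, $(2)\Rightarrow(3)$ trivially, and $(3)\Rightarrow(1)$ by transferring the non--vanishing of the $(n-1)$st Ulm invariant from $\boldt$ to $G$ through Lemma~\ref{preser}; you are merely more explicit than the paper about the height computations and about why $u_{n-1}\ne 0$ is equivalent to the existence of a cyclic summand of exponent $n$. The one caveat --- shared with the paper's own proof, so not a defect of your argument relative to it --- is that the step ``$f_0\notin p^n\boldt[p]$ implies $Gf_0\not\leq p^nG[p]$'' invokes the converse direction of Lemma~\ref{preser}, whereas the paper's proof of that lemma only establishes that every element of $p^nG[p^i]$ is the image of \emph{some} element of $p^n\boldt[p^i]$, not the element--wise equivalence (e.g.\ for $G=\bigoplus_{i\geq 1}\bbZ(p^i)$ the map sending the generator $a_1$ of the $\bbZ(p)$ summand to $pa_2$ and killing a complement has image in $pG[p]$ but is not divisible by $p$ in $\calE$).
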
 
 \begin{proof} $(1)\Rightarrow (2)$ Let $\la a\ra$ be a cyclic summand of $G$ of exponent $n$.  Then the natural projection $f$ of $G$ onto $\la a\ra$ is an idempotent element of $\boldt$ such that $\la f\ra$ is a 
cyclic summand of $\boldt$ exponent $n$.

 $(2)\Rightarrow (3)$ is trivial.
 
 $ (3)\Rightarrow (1)$  
 By Lemma \ref{preser}, for all  $ n\in\bbN^+,\ \im \left(p^n\boldt[p]\right)=p^nG[p]$.  Recall that for any group $H,\ p^{n-1}H[p]\ne p^nH[p]$ if and only if $H$ has a cyclic summand of exponent $n$.
  
  Thus $\boldt$ has a cyclic summand of exponent $n$ implies $p^{n-1}\boldt[p]\ne p^n\boldt[p]$ so $p^{n-1}G[p]\ne p^n G [p]$ and hence   $G$  has a cyclic summand of exponent $n$.
  \end{proof}

  \begin{corollary} \label{cyc}\begin{enumerate}\item $p^k\boldt[p]/p^{k+1}\boldt]p]\ne 0$    if and only if $p^kG[p]/p^{k+1}G\ne 0$;
  \item Let $\boldsigma$ be an indicator with finite entries. Then $\boldsigma\in\calI^{<\omega}(G)$ if and only if $\boldsigma\in \calI(\boldt)$.
 \qed \end{enumerate}\end{corollary}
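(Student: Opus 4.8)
The plan is to read both statements off Proposition \ref{ip} after re-expressing the Ulm--invariant quotients in terms of cyclic summands.

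For part (1), I would first recall the standard fact already invoked in the proof of Proposition \ref{ip}: for any group $H$, $p^kH[p]\ne p^{k+1}H[p]$ exactly when $H$ has a cyclic summand of exponent $k+1$. Since $p^k\boldt[p]/p^{k+1}\boldt[p]\ne 0$ is the same as $p^k\boldt[p]\ne p^{k+1}\boldt[p]$, and likewise for $G$, this fact converts both sides of the desired equivalence into the assertions ``$\boldt$ has a cyclic summand of exponent $k+1$'' and ``$G$ has a cyclic summand of exponent $k+1$''. These are identified by the equivalence $(1)\Leftrightarrow(3)$ of Proposition \ref{ip}, so the chain of biconditionals closes with no computation.

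For part (2), the key preliminary observations are that both $G$ and $\boldt$ are unbounded and that $\boldt$ is separable (Corollary \ref{separ}). Unboundedness makes the length clause in the definition of admissibility vacuous for each group, so admissibility of a finite--entry indicator $\boldsigma$ reduces to the gap clause: for every $i$ at which $\boldsigma$ has a gap, the relevant Ulm invariant must be nonzero. Separability of $\boldt$ gives $\calI(\boldt)=\calI^{<\omega}(\boldt)$ by Lemma \ref{Ei1}, so I may work throughout with finite--entry indicators. It then remains only to compare the gap conditions: $\boldsigma$ is $G$--admissible iff $u_{\sigma_i}(G)\ne 0$ at each gap, and $\boldt$--admissible iff $u_{\sigma_i}(\boldt)\ne 0$ at each gap. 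Because the entries $\sigma_i$ are finite, part (1) --- which says precisely that the $k$--th Ulm invariants of $G$ and $\boldt$ vanish together --- makes these two conditions coincide, whence $\calI^{<\omega}(G)=\calI(\boldt)$.

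I do not expect a genuine obstacle here; the whole corollary is bookkeeping against the definitions once Proposition \ref{ip} is in hand. The only points demanding a little care are verifying that the length clause is truly vacuous (which rests on $\boldt$ being unbounded as well as $G$), and keeping straight the index shift between ``$p^kH[p]\ne p^{k+1}H[p]$'' and ``cyclic summand of exponent $k+1$'', so that a finite entry $\sigma_i=k$ is matched with the correct Ulm invariant $u_k$.
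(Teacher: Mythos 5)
Your proposal is correct and follows exactly the route the paper intends: the corollary is left as an immediate consequence of Proposition \ref{ip}, and your argument simply makes explicit the translation between the quotients $p^k H[p]/p^{k+1}H[p]$ and cyclic summands of exponent $k+1$, then matches the gap conditions for admissibility (compare the paper's own Lemma \ref{admissib}, which repeats this reasoning). Your added care about the vacuity of the length clause and the index shift $k \leftrightarrow k+1$ is appropriate and introduces nothing beyond what Proposition \ref{ip} and Corollary \ref{separ} already supply.
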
  
 
  \begin{proposition}\label{basi} Let  $G$ be unbounded with basic subgroup $B=\bigoplus_{i<\omega} B_i=B(\boldn,\,\boldm)$.   For each $n<\omega$, let $\rank(G/\bigoplus_{i\in[n-1]}B_i)=\mu_n$.
  
  Let $E=\bigoplus_{i<\omega} E_i=B(\boldnu,\boldk)$ be a basic subgroup of $\boldt$. Then
\begin{enumerate}
 \item  $\boldn=\boldnu$
 
 \item  If $ m_i$ is finite, then $k_i= m_i\times \mu_i$;
 if $m_i$ is infinite, then $k_i= 2^{m_i}\times \mu_i$
  \end{enumerate}
 \end{proposition}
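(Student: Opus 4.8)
The plan is to reduce both assertions to statements about Ulm invariants, which is legitimate because $\boldt$ is separable (Corollary \ref{separ}). For any reduced $p$--group $H$, the exponent--$n$ homocyclic component of a basic subgroup of $H$ has rank equal to the Ulm invariant $u_{n-1}(H)=\rank(p^{n-1}H[p]/p^nH[p])$. Applied to $H=\boldt$ with basic subgroup $E=B(\boldnu,\boldk)$ this gives $k_i=u_{\nu_i-1}(\boldt)$, and applied to $H=G$ with basic subgroup $B(\boldn,\boldm)$ it gives $m_i=u_{n_i-1}(G)$. So both parts become comparisons of the Ulm sequences of $G$ and $\boldt$.

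For part (1) I would show that $\boldn$ and $\boldnu$ enumerate the same set of exponents. By Proposition \ref{ip}, $G$ has a cyclic summand of exponent $n$ if and only if $\boldt$ does; by the criterion $p^{n-1}H[p]\ne p^nH[p]$ used there (and consistent with Corollary \ref{cyc}), this says precisely that $n$ occurs among the $n_i$ if and only if it occurs among the $\nu_i$. Since both sequences are by convention strictly increasing, they coincide, so $\nu_i=n_i$ and $k_i=u_{n_i-1}(\boldt)$.

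For part (2) the core task is to evaluate $u_{n_i-1}(\boldt)$, and here Lemma \ref{preser} is the essential tool. Taking $i=1$ in that lemma identifies, for each $n$, the socle layer $(p^n\boldt)[p]=\{f\in\calE\colon Gf\le p^nG[p]\}=\Hom(G,p^nG[p])$, so that
\[
u_{n_i-1}(\boldt)=\rank\big(\Hom(G,p^{n_i-1}G[p])/\Hom(G,p^{n_i}G[p])\big).
\]
Because $p^{n_i-1}G[p]$ and $p^{n_i}G[p]$ are $\bbF_p$--spaces, one has $\Hom(G,V)=\Hom_{\bbF_p}(G/pG,V)$ for each such $V$, and $\Hom_{\bbF_p}(G/pG,-)$ is exact; splitting the sequence $0\to p^{n_i}G[p]\to p^{n_i-1}G[p]\to U_i\to 0$ of $\bbF_p$--spaces, where $U_i:=p^{n_i-1}G[p]/p^{n_i}G[p]$ has dimension $u_{n_i-1}(G)=m_i$, identifies the quotient above with $\Hom_{\bbF_p}(G/pG,U_i)$. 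Thus the problem reduces to
\[
k_i=\rank\Hom_{\bbF_p}(G/pG,U_i),\qquad \dim_{\bbF_p}U_i=m_i.
\]

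The remaining and genuinely delicate step is the cardinal computation of $\rank\Hom_{\bbF_p}(G/pG,U_i)$ and its identification with $m_i\times\mu_i$ (resp. $2^{m_i}\times\mu_i$); I expect this to be the main obstacle. Two points require care. First, one must translate the ambient data into $\mu_i=\rank(G/\bigoplus_{j\in[i-1]}B_j)$, using the splitting $G=\bigoplus_{j\le i-1}B_j\oplus(\bigoplus_{j\ge i}B_j+p^{i-1}G)$ from the basic--subgroup properties to relate $G/pG$ (the source of the $\Hom$) to the tail complementing the first $i-1$ summands, and checking that it is this tail rank, rather than the total rank of $B$, that controls the answer. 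Second, since the source is infinite--dimensional, the value is governed by the dimension of a space of $\bbF_p$--linear maps, so the bookkeeping must split according to whether $m_i$ is finite or infinite and be carried out carefully in infinite cardinal arithmetic (products, dual spaces, and the passage from $\dim$ to cardinality for infinite--dimensional $\bbF_p$--spaces). Establishing the two cases cleanly is where the real work lies; the reductions preceding it are formal consequences of Lemma \ref{preser}, Proposition \ref{ip}, and separability.
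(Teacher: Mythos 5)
Your part (1) and the dictionary $k_i=u_{\nu_i-1}(\boldt)$, $m_i=u_{n_i-1}(G)$ are fine, and part (1) is argued exactly as in the paper (via Proposition \ref{ip}). The proof of part (2) breaks at the point where you read Lemma \ref{preser} as the identification $(p^n\boldt)[p]=\{f\in\calE\colon Gf\leq p^nG[p]\}=\Hom(G,p^nG[p])$. Only the inclusion $(p^n\boldt)[p]\subseteq\Hom(G,p^nG[p])$ holds. The converse direction of Lemma \ref{preser}, as actually proved in the paper, says only that every element of $p^nG[p^i]$ lies in the image of some $f\in p^n\boldt[p^i]$, i.e.\ $\im(p^n\boldt[p^i])=p^nG[p^i]$, which is how the lemma is used in Proposition \ref{ip}; it does not say that every $f$ with $Gf\leq p^nG[p^i]$ is divisible by $p^n$ in $\boldt$, and that statement is false. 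Concretely, if $\la a\ra$ is a cyclic summand of $G$ of exponent $1$ and $f$ kills a complement of $\la a\ra$ and sends $a$ to some $0\ne c\in pG[p]$, then $Gf\leq pG[p]$ but $f\notin p\calE$, since $f=pg$ would force $af=p(ag)=0$ because $ag$ has exponent at most $1$. The correct socle layer is $(p^n\boldt)[p]=p^n(\boldt[p^{n+1}])=p^n\Hom(G,G[p^{n+1}])$, which is strictly smaller than $\Hom(G,p^nG[p])$ as soon as $G$ has cyclic summands of exponent $\leq n$: divisibility of $f$ by $p^n$ constrains $f$ on elements of small exponent in the domain, not merely the image of $f$.

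As a result the displayed reduction $k_i=\rank\Hom_{\bbF_p}(G/pG,U_i)$ is not valid, and the step you defer as ``where the real work lies'' cannot rescue it: $\rank\Hom_{\bbF_p}(G/pG,U_i)$ is a dual--space dimension controlled by $2^{\rank(G)}$ in which $\mu_i$ never appears, and already for $G=\bigoplus_{n\geq1}\bbZ(p^n)$ it equals $2^{\aleph_0}$ while the target $m_1\times\mu_1$ is $\aleph_0$. So your chain of reductions terminates at a quantity different from the one to be proved. The paper's own argument does not go through the Ulm invariants of $\boldt$ at all: it identifies $E_i$ directly with $\Hom(B_i,\,G/\bigoplus_{j\in[i]}B_j)\cong\prod_{m_i}\Hom(\bbZ(p^{n_i}),\,G/\bigoplus_{j\in[i]}B_j)$ and reads the rank off this product, which is where the case split on finite versus infinite $m_i$ and the parameter $\mu_i$ enter. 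To repair your route you would have to recompute $u_{n_i-1}(\boldt)$ from the correct layers $p^n\Hom(G,G[p^{n+1}])$, and the value you then obtain should be checked against the stated formula rather than assumed to agree with it.
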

 \begin{proof}

(1)  This follows from Proposition \ref{ip}.

(2) Since $\bigoplus_{i\in[n-1]}B_i$ is the summand of $G$ consisting of   cyclic summands of exponent $<n,\ E_i\cong \Hom(B_i, G/\bigoplus_{i\in[n]}B_i)\cong \prod_{m_i}\Hom(\bbZ(p^{n_i}), G/\bigoplus_{i\in[n]}B_i)$.
If $m_i$ is finite, then $\rank(E_i)=m_i\times \mu_i$, while if $m_i$ is infinite, $\rank(E_i)= 2^{m_i}\times \mu_i$.
 \end{proof}
 
 The abelian group invariants of $\boldt$ follow immediately:
  \begin{theorem}\label{obvious}  Let $G$ be an  unbounded group with basic subgoup $B=B(\boldn,\boldm)$ and let $\boldt$ be the torsion ideal of the endomorphism ring $\calE$. Let $(\boldnu,\boldk)$ be the   sequence defined in Proposition \ref{basi}.
  
  Then $\boldt$ is a   separable   $p$--group with basic subgroup $B(\boldnu,\boldk)$ and the   factor ring $\calE/\boldt$ is a  reduced  torsion--free $p$--adic module.
\end{theorem}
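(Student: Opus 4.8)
The statement splits into four assertions, three of which I would dispatch at once and one of which carries all the weight. The separability of $\boldt$ and the identification of its basic subgroup require nothing new: Corollary~\ref{separ} already gives that $\boldt$ is a separable $p$--group, so by the general theory recalled in \S2 it has a basic subgroup, unique up to isomorphism. Proposition~\ref{basi} computes the exponents and ranks of any basic subgroup of $\boldt$ to be exactly the sequences $\boldnu$ and $\boldk$: its part~(1) fixes the exponents through Proposition~\ref{ip}, and its part~(2) fixes the ranks. Hence the basic subgroup of $\boldt$ is $B(\boldnu,\boldk)$, and this half of the theorem is complete.

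Next I would record the two formal properties of the factor ring. Since $\boldt$ is by definition the set of \emph{all} finite--order elements of $\calE$, the implication $p^{k}f\in\boldt\Rightarrow f\in\boldt$ is immediate, so $\calE/\boldt$ is torsion--free. Because $G$ is a $p$--adic module and $\calE$ its $p$--adic endomorphism ring, $\calE$ is a $\bbZ_p$--algebra and $\boldt$ a two--sided ideal; thus $\calE/\boldt$ inherits a $\bbZ_p$--module structure and is a $p$--adic module. Both points are one--line verifications.

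The reducedness of $\calE/\boldt$ is where I expect the real work to lie. As $\calE/\boldt$ is torsion--free, its maximal divisible submodule is precisely $\bigcap_{n}p^{n}(\calE/\boldt)$, so the assertion is equivalent to the statement that $\boldt$ is closed in the $p$--adic topology of $\calE$. The natural tool is Lemma~\ref{Ei1}, which says exactly that $\calE$ is Hausdorff, $\bigcap_{n}p^{n}\calE=0$. Given $f$ in the closure of $\boldt$, one writes $f=t_n+p^{n}g_n$ with $t_n\in\boldt$ and $p^{c_n}t_n=0$, so that $p^{c_n}f=p^{c_n+n}g_n$ has $\calE$--height at least $c_n+n$. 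When the orders $p^{c_n}$ stay bounded, a pigeonhole argument on the finitely many values of $c_n$ contradicts Lemma~\ref{Ei1}, since some fixed nonzero $p^{j}f$ would then have unboundedly large finite height; this forces $f\in\boldt$.

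The obstacle, and in my view the crux of the theorem, is the complementary case in which the approximating torsion endomorphisms have orders tending to infinity. Then the only bound on hand, $Gf\subseteq Gt_n+p^{n}G\subseteq G[p^{c_n}]+p^{n}G$, no longer forces $Gf$ to be bounded, and a nonzero divisible residue is not obviously excluded; indeed, a diagonal endomorphism of $\bigoplus_{i}\bbZ(p^{i})$ sending the generator of the $i$th summand to $p^{\lfloor i/2\rfloor}$ times itself already exhibits an $f\notin\boldt$ lying in every $\boldt+p^{n}\calE$, so $\boldt$ can fail to be closed. I would therefore scrutinise this step most carefully: completing it seems to demand either a finer analysis on the socle $G[p]$ with its height filtration, via Lemma~\ref{preser}, to track how $f$ displaces heights, or an additional hypothesis on $G$ that rules out such \lq\lq tapering\rq\rq\ torsion approximants.
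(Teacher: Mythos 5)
Your handling of the first three assertions matches the paper's: separability is Corollary \ref{separ}, the basic subgroup is read off from Proposition \ref{basi}, and torsion--freeness of $\calE/\boldt$ together with its $p$--adic module structure are the same one--line observations the paper makes. Where you diverge is on reducedness, and there your diagnosis is not only different from the paper's argument but exposes a genuine error in it. The paper takes $0\ne\ov f$ in a divisible submodule, writes $\ov f=p^{n_i}\ov{f_i}$, notes that $f-p^{n_i}f_i$ has finite order $p^{m_i}$, and then asserts ``since $\calE/\boldt$ is torsion--free, $f=p^{n_i}f_i$.'' That inference is a non--sequitur: torsion--freeness of the quotient only yields $f-p^{n_i}f_i\in\boldt$, which is how $f_i$ was chosen in the first place; to kill that torsion element one would need $\calE$ itself to be torsion--free, which it is not. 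So the paper's proof of reducedness fails at exactly the step you flagged as the crux.

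Moreover your counterexample is correct and shows the reducedness claim is false as stated, not merely unproved. Your reduction is right: since $\calE/\boldt$ is torsion--free, its maximal divisible submodule is $\bigcap_n p^n(\calE/\boldt)=\bigl(\bigcap_n(\boldt+p^n\calE)\bigr)/\boldt$, so reducedness is equivalent to $\boldt$ being $p$--adically closed in $\calE$. Your bounded--order case is fine (if the orders $p^{c_n}$ are bounded by $p^c$ then $p^cf\in\bigcap_n p^n\calE=0$ by Lemma \ref{Ei1}). And for $G=\bigoplus_{i\geq 1}\bbZ(p^i)$ the diagonal map $f\colon e_i\mapsto p^{\lfloor i/2\rfloor}e_i$ does the job: $p^Nf\ne 0$ for every $N$ (look at $i>2N$), so $f\notin\boldt$, yet $f=t_n+p^ng_n$ with $t_n$ the restriction of $f$ to the summands $i<2n$ (which is killed by $p^n$, hence lies in $\boldt$) and $e_ig_n=p^{\lfloor i/2\rfloor-n}e_i$ for $i\geq 2n$. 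Thus $\ov f$ is a nonzero element of the divisible submodule $\bigcap_n p^n(\calE/\boldt)$, and $\calE/\boldt$ is not reduced for this unbounded reduced $G$. The remaining assertions of the theorem (separability, the basic subgroup $B(\boldnu,\boldk)$, torsion--freeness of the quotient) are unaffected, but the reducedness clause should be deleted or restricted, and any later use of it re--examined.
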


\begin{proof} The result is clear if $G$ is bounded, so assume $G$ is unbounded.

Since $\calE$ is a $p$--adic algebra, $ \boldt$ is a $p$--group and $\calE/\boldt$ a torsion--free $p$--adic module. Since $G$ has an unbounded basic subgroup, $\boldt$     is also unbounded.
By Corollary \ref{separ}     $\boldt$   is a separable $p$--group.   By Proposition \ref{basi} $\boldt$ has a basic subgroup of   isomorphic to  $\bigoplus_i \End(B_i)$.

Since $\calE/\boldt$ is a    torsion--free $p$--adic module, it remains to show $\calE/\boldt$ is reduced.

For all $f\in \calE$, denote $f+\boldt\in\calE/\boldt$ by $\ov f$.  Suppose $\calE/\boldt$ has a divisible $p$--adic summand $D$.   
Let $0\ne\ov f\in D$. Then there exist an increasing sequence $(n_i\colon i<\omega,\,n_i\in\bbN)$ and a sequence $\{ {f_i}\colon i<\omega,\ f_i\in\calE\}$   such that $\ov f=p^{n_i}\ov{f_i} $. For all $i>1$,\ let $\exp(f-p^{n_i }{f_{i }})=m_i$. Then $p^{m_i}(f-p^{n_i}f_i)=0$. Since $\calE/\boldt$ is torsion--free, for all $i<\omega,\ f=p^{n_i}f_i$; so $f$ is divsible in $\calE$, a contradiction. Hence $\calE/\boldt$ is reduced. 
\end{proof}

\begin{corollary}\label{4.3} Under the hypotheses of Theorem \ref{obvious}, let $E=\bigoplus_{i<\omega}E_i$.

\begin{enumerate}\item $E\leq\boldt\leq\ov{E}$ where $\boldt$ is 
  a pure subring of  $\ov{E}$ with divisible factor group;
  \item If $G$ is  $\Sigma$--cyclic, then $\boldt=\ov{E}$.
 \end{enumerate} \end{corollary}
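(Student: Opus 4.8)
The plan is to read everything off Theorem~\ref{obvious}, which already asserts that $\boldt$ is a separable $p$--group with basic subgroup $E=\bigoplus_{i<\omega}E_i$. First I would invoke the characterisation of separable groups recalled in \S2 (the equivalent condition that a separable group lies between its basic subgroup and the torsion completion): since $\boldt$ is separable with basic subgroup $E$, its torsion completion $\ov E$ satisfies $E\leq\boldt\leq\ov E$, with $\boldt$ pure in $\ov E$ and $\ov E/\boldt$ divisible. This gives the group--theoretic content at once. To upgrade ``pure subgroup'' to ``pure subring'' I would note that $E$ is dense in $\boldt$: because $\boldt/E$ is divisible and $\boldt$ is reduced, $\boldt=E+p^n\boldt$ for every $n$, and purity of $E$ gives $p^n\boldt\cap E=p^nE$, so $E$ and $\boldt$ carry the same $p$--adic completion $\hat{\boldt}=\hat E$, whose torsion subgroup is $\ov E$. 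The composition product on the ideal $\boldt$ is biadditive and satisfies $p^a(fg)=(p^af)g$, hence is $p$--adically continuous and extends to $\hat{\boldt}$. Restricting to torsion elements, $p^ax=0$ forces $p^a(xy)=0$, so $\ov E$ is closed under this product; thus $\ov E$ is a ring in which $\boldt$ sits as a pure subring, as required.

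\textbf{Part (2).} Suppose $G$ is $\Sigma$--cyclic. Then $G$ is its own basic subgroup, so $G=B=\bigoplus_{i<\omega}B_i$. The goal is to show the inclusion $\boldt\leq\ov E$ of Part~(1) is an equality, i.e. that every element of $\ov E$ already lies in $\boldt$. I would represent an arbitrary element of $\ov E$ as a bounded sequence $(e_i)$ with $e_i\in E_i$, say $\exp(e_i)\leq k$ for all $i$. By the description of $E_i$ in Proposition~\ref{basi}, each $e_i$ is realised by a homomorphism $B_i\to G$. Because $G=\bigoplus_i B_i$, the universal property of the direct sum assembles the family $(e_i)$ into a single $f\in\End(G)$ with $f|_{B_i}=e_i$ for every $i$; crucially, no $p$--adic convergence is needed here, precisely because $G$ is $\Sigma$--cyclic. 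Then $\Im f=\sum_i\Im e_i\subseteq G[p^k]$, whence $p^kf=0$ and $f\in\boldt$.

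\textbf{The main obstacle.} What remains, and where I expect the real work to be, is to verify that the assembled endomorphism $f$ actually represents the prescribed sequence $(e_i)$ under the canonical embedding $\boldt\hookrightarrow\ov E$ fixed in Part~(1). This requires making the coordinate projections $\boldt\to E_i$ attached to the basic decomposition $E=\bigoplus_i E_i$ explicit and checking that they send $f$ to $e_i$, using purity of $E$ in $\boldt$ to pin down the representative uniquely. Once this compatibility is in hand, the map $\boldt\hookrightarrow\ov E$ is surjective and $\boldt=\ov E$. The existence of the endomorphism is free from the direct--sum universal property; the only delicate point is the bookkeeping that matches the abstract torsion--completion coordinates against the concrete restrictions $f|_{B_i}$.
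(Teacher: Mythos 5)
Your proposal is correct and follows essentially the same route as the paper: part (1) is read off from the characterisation of separable groups recalled in \S 2 applied to $\boldt$ with basic subgroup $E$ (via Theorem \ref{obvious}), and part (2) assembles a bounded sequence into a genuine finite--order endomorphism of the $\Sigma$--cyclic group $G=B$. You in fact supply more detail than the paper's two--line proof, notably on extending the multiplication to $\ov E$ and on the coordinate bookkeeping in part (2), both of which the paper leaves implicit.
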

  \begin{proof} (1) Section 2 describes the structure of separable  $p$--groups with   basic subgroup
  $E$.
  
  (2)    Since every $f\in \End(B)$  extends uniquely to $\End(\ov B),\ \boldt=\ov E $. 
  \end{proof}
  
  Although we cannot classify the groups $H$ between $E$ and $\ov E$, we can at least identify those which are rings:

   \begin{proposition}\label{:} Let $G$   be an unbounded group and let $E=\bigoplus_i E_i$ be a basic subgroup of $\boldt$. Let
    $E\leq H\leq\ov E$. Then $H$ is a subring of $\ov E$ if and only if whenever $(a_i)$ and $(b_i)\in H,\ (a_ib_i)\in H$.   
  \end{proposition}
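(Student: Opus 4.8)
The plan is to first strip the statement down to its multiplicative content. Since $E\leq H\leq\ov E$, the subset $H$ is automatically an additive subgroup of $\ov E$, and as $\ov E$ is not unital (its identity is not of finite order, hence not in the torsion completion) a subring here means nothing more than an additive subgroup closed under multiplication. Thus $H$ is a subring precisely when it is closed under the ring product of $\ov E$, and the whole proposition reduces to computing that product in the sequence representation and comparing it with the operation $((a_i),(b_i))\mapsto(a_ib_i)$.

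The key step is therefore to describe the multiplication of $\ov E$ coordinatewise. By Corollary~\ref{4.3}, $\boldt$ is a pure subring of $\ov E$ and $E$ is dense in $\ov E$ in the $p$-adic topology, in which multiplication is continuous; so the product of two elements $(a_i),(b_i)\in\ov E$ is the $p$-adic limit of the products of their finite truncations in $E$. I would carry this out by writing each element as the convergent sum of its one-block pieces, multiplying the pieces, and tracking which coordinates survive in the limit. The diagonal pieces contribute exactly the sequence $(a_ib_i)$, which lies in $\ov E$ because exponents do not increase under multiplication, so $(a_ib_i)$ is again a bounded sequence. The point to be established is that, modulo $H$, nothing else survives: the cross-block contributions are absorbed, so that membership of the full product in $H$ is equivalent to membership of $(a_ib_i)$ in $H$.

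Granting this coordinatewise description, both implications are immediate: if $H$ is a subring it is closed under the product, hence contains $(a_ib_i)$ whenever it contains $(a_i)$ and $(b_i)$; conversely, closure under $(a_ib_i)$ together with the additive-subgroup structure gives closure under the product, so $H$ is a subring. The hard part will be exactly the coordinatewise computation of the product --- the continuity and convergence bookkeeping needed to justify passing from finite truncations to arbitrary bounded sequences, and in particular the control of the cross-block terms. This is where the structure of $\boldt$ as a pure subring of $\ov E$, rather than merely an abstract torsion-complete group, does the real work.
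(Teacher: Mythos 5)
Your opening reduction --- that $H$ is automatically an additive subgroup, so ``subring'' means nothing more than closure under the product --- is exactly the reduction the paper makes, and your two closing implications coincide with the paper's. The divergence is in the middle step, and that is where your proposal has a genuine gap. The paper treats the multiplication on $\ov E\leq\prod_i E_i$ as coordinatewise from the outset (its proof notes only that $a_i,b_i\in E_i$ gives $a_ib_i\in E_i$ and that the distributive laws hold pointwise), so for the paper the proposition is an immediate unwinding of definitions. You instead take the product to be the $p$-adic limit of products of truncations and then assert that, modulo $H$, nothing else survives because ``the cross-block contributions are absorbed.'' That assertion is the entire mathematical content of your route, and you offer no argument for it; you explicitly defer it as ``the hard part.''

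It is not a routine piece of bookkeeping. The decomposition $E=\bigoplus_i E_i$ is an additive basic-subgroup decomposition of $\boldt$; nothing in the paper makes the blocks $E_i$ orthogonal under composition, and the only hypothesis on $H$ beyond being a subgroup of $\ov E$ is that it contains $E$. An infinite family of cross terms $a_ib_j$ with $i\neq j$ therefore has no reason to sum, or converge, to an element of $H$, so the claim that membership of the full product in $H$ is equivalent to membership of $(a_ib_i)$ in $H$ is unsupported. You must either show that the product on $\ov E$ is literally coordinatewise with respect to this decomposition --- in which case the limit-of-truncations machinery is superfluous and you land on the paper's one-line argument --- or supply a proof of the absorption claim. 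As it stands the equivalence is not established.
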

  \begin{proof} Suppose $H$ is a subring of $\ov E$ containing $E$ and let $a=(a_i)$ and $b=(b_i)\in H$. Then for all $i<\omega,\ a_i,\ b_i\in E_i$ so  $a_ib_i\in E_i$ and hence $H$ is closed under addtion. 
  
  Conversely, since the distributive properties are defined pointwise, closure under multiplication  suffices   to ensure  $H$ is a ring. If $(a_i)$ and $(b_i)\in H$   imply $(a_ib_i)\in H$, then $H$ is closed under multiplication, and hence is a ring.
    \end{proof}

\section{The action of $\boldt$ on $G$}

\begin{notation} 
A subgroup $H$ of $G$ is \textit{ $\boldt$--invariant} if for all $f\in \boldt,\ Hf\leq H$. 

Denote by $\boldt$-inv$(G)$ the set of  $\boldt$--invariant subgroups of $G$, and by $\calId(\boldt)$ the set of ideals of $\boldt$.

\end{notation}  

 We have seen that for all $G$--admissible indicators $\boldsigma,\ G(\boldsigma)$ is a fully invariant and hence $\boldt$--invariant subgroup of $G$. I now show that conversely, for every $\boldt$--invariant subgroup $H$ of $G,\ H=G(\boldsigma)$ for a unique $\boldt$--admissible $\boldsigma$. 
 
 \begin{proposition}\label{4.11} Let $H\in  \boldt$-inv$(G)$. For all $i<\omega$, let $\sigma_i=\min\{\height(p^ia)\colon a\in H\}$. Then $\boldsigma$ is $\boldt$--admissible and $H=G(\boldsigma)$.
 \end{proposition}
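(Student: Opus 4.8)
The plan is to recognize the candidate indicator as a meet in the lattice $\calI$ and then prove the two inclusions separately, carrying the admissibility claim alongside. First I would observe that, setting $\boldsigma = \bigwedge_{a\in H}\ind(a)$, the defining formula $\sigma_i=\min\{\height(p^i a)\colon a\in H\}$ is exactly the coordinatewise minimum, so since $\calI$ is a complete lattice $\boldsigma$ is a genuine indicator. The easy inclusion $H\subseteq G(\boldsigma)$ is then immediate: for each $a\in H$ we have $\boldsigma\preceq\ind(a)$ by construction, so $a\in G(\boldsigma)$.

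Next I would establish that $\boldsigma$ is $\boldt$--admissible, which by Corollary \ref{cyc}(2) amounts to showing $\boldsigma\in\calI^{<\omega}(G)$, i.e.\ that $\boldsigma$ has finite entries and satisfies the gap condition. For the gap condition, suppose $\boldsigma$ has a gap at $i$ with $\sigma_i=\kappa$; I would use Corollary \ref{cyc}(1), which identifies the finite Ulm invariants of $\boldt$ with those of $G$, to conclude $u_\kappa\ne 0$. The finiteness of the entries is the delicate point: here I would exploit that the projections of $G$ onto the basic summands $B_j$ are bounded, hence lie in $\boldt$, and that by Lemma \ref{preser} every map $b\mapsto a$ with $a\in p^nG[p^i]$ and $b$ in a sufficiently high cyclic summand is realized by an element of $\boldt$. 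Applying such projections to elements of $H$ produces witnesses in $H$ of finite height that pin down each $\sigma_i$, and the purity of the basic subgroup guarantees these heights are finite.

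The reverse inclusion $G(\boldsigma)\subseteq H$ is the main obstacle. I would first reduce to the bounded case via Corollary \ref{Cor7}: since $G(\boldsigma)=\sum_{n}G(\boldsigma^{(n)})$ and each $G(\boldsigma^{(n)})$ consists of elements of exponent at most $n+1$, it suffices to show $G(\boldsigma^{(n)})\subseteq H$ for every $n$, which places us in the setting of \cite{AvS}. Fixing $c\in G(\boldsigma^{(n)})$, so that $\ind(c)\succeq\boldsigma$ and $\exp(c)=m+1$ is finite, the goal is to write $c=af$ for some $a\in H$ and some $f\in\boldt$, whence $c\in H$ by $\boldt$--invariance. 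The construction has two ingredients: using that $H$ is a subgroup together with the rich supply of torsion endomorphisms from Lemma \ref{preser} to assemble a single $a\in H$ with $\exp(a)=m+1$ and $\ind(a)\preceq\ind(c)$ realizing the coordinate minima; and, because separable groups are fully transitive, realizing the height--raising map $a\mapsto c$ by an endomorphism of finite order.

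The hard part will be the reverse inclusion, and within it the simultaneous realization of all the coordinate minima $\sigma_0,\dots,\sigma_m$ by one element of $H$ of the correct exponent, since a priori different elements of $H$ witness different $\sigma_i$; controlling this while ensuring the connecting endomorphism $f$ is genuinely torsion is where Lemma \ref{preser} must be applied with care.
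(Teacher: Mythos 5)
Your plan follows essentially the same route as the paper's proof: the easy inclusion $H\leq G(\boldsigma)$, then, for the reverse inclusion, the production of a single element $a\in H$ whose indicator realizes all the minima $\sigma_i$ simultaneously, followed by a torsion endomorphism carrying $a$ onto the target element of $G(\boldsigma)$. The difficulty is that you stop exactly at the step you yourself flag as the hard part, and that step is the whole content of the proposition. The paper's resolution is short: one first observes that each minimum $\sigma_i=\min\{\height(p^ia)\colon a\in H\}$ is already attained by some $a_i\in H[p^{i+1}]$, and then, given a target $b\in G(\boldsigma)$ of exponent $n$, sets $a=\sum_{i<n}a_i\in H$, so that $\height(p^ia)=\sigma_i\leq\height(p^ib)$ for all $i$. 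Because the finitely many $a_i$ lie in a finite summand of $G$, the height--increasing map $a\mapsto b$ is realized by an endomorphism $f$ factoring through a bounded summand, hence $f\in\boldt$ and $b=af\in H$ by $\boldt$--invariance. Note that this construction also disposes of admissibility for free: once the truncation of $\boldsigma$ at each level is exhibited as $\ind(a)$ for an actual $a\in H$, its gaps automatically fall on nonzero Ulm invariants, so your separate detour through Corollary \ref{cyc} and projections onto basic summands is not needed (and as written it does not actually connect the gaps of $\boldsigma$ to the Ulm invariants of $G$ --- Corollary \ref{cyc} compares the invariants of $G$ and $\boldt$, it does not locate the gaps of a pointwise minimum of indicators).

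Two smaller points. The reduction via Corollary \ref{Cor7} buys nothing: every element of the $p$--group $G(\boldsigma)$ already has finite exponent, so one argues element by element exactly as in the bounded case, which is what the paper does. And you are right to be suspicious of invoking full transitivity: that only produces $f\in\calE$ with $af=b$, whereas the proposition needs $f\in\boldt$; it is the containment of the witness $a$ in a finite (hence bounded) summand --- not Lemma \ref{preser} on its own --- that lets one take $f$ of finite order. In summary, the proposal is a correct outline of the paper's own strategy, but the decisive construction of the single witness in $H$ is named rather than carried out, so as a proof it has a genuine gap.
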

 
 \begin{proof} For all $a\in H,\ \height(p^ia)\geq\sigma_i$, so $H\leq G(\boldsigma)$.
 
 Conversely, let $b\in G(\boldsigma)$ have exponent $n$. For all $i<\omega,\ \min\{\height(p^ia)\colon a\in H\}$ is realised  for  $a\in H[p^{i+1}]$, so $\boldsigma$ is $\boldt$--admissible and  for all $i<\omega$ there exist $a_i\in H[p^{i+1}]$ with $\height(p^ia_i)=\sigma_i$. Let $a=\sum_{i<n}a_i$. Then $a\in H$ and for all $i<\omega,\ \height(p^ia)=\sigma_i\leq \height(p^ib)$. Since $\{a_i\colon i\in[0,n]\}$ is contained in a finite summand of $G$, there exists $f\in\boldt$ such that $af=b$. Hence $b\in H$.
  \end{proof}
\begin{proposition}\label{fik} $\boldt$-inv$(G)$ and $\calId(\boldt)$ are complete lattices under inclusion. 
\end{proposition}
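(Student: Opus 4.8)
The plan is to realise each of the two posets as a complete lattice directly, by showing that it is closed under arbitrary intersections and under arbitrary (possibly infinite) sums. Once both closure properties hold, intersection computes arbitrary meets and sum computes arbitrary joins, the ambient objects $G$ and $\boldt$ serve as top elements and $0$ as bottom, and completeness is immediate. I would deliberately avoid transporting completeness across the order-reversing map $\boldsigma\mapsto G(\boldsigma)$: the only structural statement recorded for the relevant indicators is that $\calI^{<\omega}$ is closed under arbitrary meets but only under \emph{finite} joins in the pointwise order, so completeness of $\calI(\boldt)$ as an abstract lattice is not something one can read off by inspection.

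For $\boldt$-inv$(G)$ I would argue as follows. Given a family $(H_j)$ in $\boldt$-inv$(G)$, the intersection $\bigcap_j H_j$ is a subgroup, and for each $f\in\boldt$ one has $\left(\bigcap_j H_j\right)f \leq \bigcap_j(H_jf)\leq\bigcap_j H_j$, so the intersection is again $\boldt$--invariant. For the sum $\sum_j H_j$ (the subgroup generated by the union), every element is a finite sum of elements drawn from the $H_j$, so additivity of $f$ gives $\left(\sum_j H_j\right)f = \sum_j(H_jf)\leq\sum_j H_j$, and the sum is $\boldt$--invariant as well. Hence intersection and sum supply all meets and all joins, and $\boldt$-inv$(G)$ is a complete lattice with least element $0$ and greatest element $G$.

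For $\calId(\boldt)$ the same two verifications are carried out inside the ring $\boldt$ rather than the module $G$: an arbitrary intersection of ideals of $\boldt$ is an ideal, and an arbitrary sum of ideals of $\boldt$ is an ideal, the closure of multiplication being checked elementwise for the intersection and through finite sums for the sum. With top element $\boldt$ and bottom element $0$, this exhibits $\calId(\boldt)$ as a complete lattice. I expect no genuine obstacle here; the only step deserving care is the closure of sums, where one must use that an element of an infinite sum is already a finite sum, together with the additivity of the endomorphisms in $\boldt$ (respectively the additive structure of $\boldt$ as a ring). This is precisely why I would record meets and joins by explicit formulas rather than invoke an abstract completion principle, which would force one to confront the pointwise-join subtlety flagged in the first paragraph.
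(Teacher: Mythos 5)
Your proposal is correct and follows essentially the same route as the paper: closure of $\boldt$-inv$(G)$ and $\calId(\boldt)$ under arbitrary intersections and arbitrary sums, which then furnish all meets and joins. Your version is in fact slightly more careful, writing $\left(\bigcap_j H_j\right)f\leq\bigcap_j(H_jf)$ as an inclusion where the paper asserts an equality that need not hold.
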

\begin{proof} Let $F\subseteq \boldt$-inv$(G)$ and $f\in \boldt$. Then $(\bigcap F)f =(\bigcap Ff)\leq \bigcap F$ so $\boldt$-inv$(G)$ is closed under meets.
Similarly $(\sum F)f= (\sum Ff)\leq \sum F$  so $\boldt$-inv$(G)$ is closed under joins.

Let $J\subseteq \calId(\boldt)$. Then $\bigcap J$ and $\sum J$ are   ideals of $\boldt$.  
\end{proof}

I now define ideals of $\boldt$ analogous to the indicator sugroups of $G$.
Recall that for $\boldsigma\in \boldt(\calI),\ \boldt(\boldsigma)=\{f\in\boldt\colon \boldsigma\preceq \ind(f)\}$.  
\begin{proposition}\label{4.0} \begin{enumerate} 
\item For all $\boldsigma\in \boldt(\calI),\ \boldt(\boldsigma)\in\calId(\boldt)$.   

\item $\boldsigma\preceq\boldtau$ if and only if $\boldt(\boldtau)\leq \boldt(\boldsigma)$.

\item The set $\{\boldt(\boldsigma)\colon \boldsigma\in \boldt(\calI)\}$ is a sublattice of $\calId(\boldt)$.
\end{enumerate}\end{proposition}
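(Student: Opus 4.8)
The plan is to derive all three parts from the group--theoretic theory of indicator subgroups already developed for arbitrary groups (the Indicators subsection and Proposition \ref{ndsub}), applied here to the separable group $\boldt$, and then to bridge the gap between \emph{fully invariant subgroup} and \emph{ideal} by exploiting the fact that the ring multiplications of $\boldt$ are endomorphisms of its additive group.

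For (1), I would start from the observation that $\boldt(\boldsigma)$ is, by definition, the indicator subgroup of the abelian $p$--group $\boldt$, hence a fully invariant subgroup by Indicators property (3); in particular it is an additive subgroup. To promote it to an ideal, I would note that for each fixed $g\in\boldt$ the left and right multiplications $f\mapsto gf$ and $f\mapsto fg$ are endomorphisms of the additive group $\boldt$, because $\boldt$ is an ideal of $\calE$ and the product is biadditive. Full invariance then gives $g\,\boldt(\boldsigma)\subseteq\boldt(\boldsigma)$ and $\boldt(\boldsigma)\,g\subseteq\boldt(\boldsigma)$, so $\boldt(\boldsigma)\in\calId(\boldt)$. (Alternatively one may invoke Indicators property (5) for the group $\boldt$, since left and right multiplication cannot lower indicators, whence $\boldsigma\preceq\ind(f)$ forces $\boldsigma\preceq\ind(gf)$ and $\boldsigma\preceq\ind(fg)$.) Part (2) is then immediate: applying Indicators property (4), equivalently Proposition \ref{ndsub}, to the group $\boldt$ with $\boldsigma,\boldtau\in\boldt(\calI)$ yields $\boldsigma\preceq\boldtau$ if and only if $\boldt(\boldtau)\leq\boldt(\boldsigma)$.

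For (3), I would use Proposition \ref{fik}, so that $\calId(\boldt)$ is a complete lattice with meet given by intersection and join by the ideal sum. Part (1) places the family $\{\boldt(\boldsigma):\boldsigma\in\boldt(\calI)\}$ inside $\calId(\boldt)$, so it remains only to verify closure under these two operations. For the meet, a direct computation gives $\boldt(\boldsigma)\cap\boldt(\boldtau)=\{f\in\boldt:\boldsigma\preceq\ind(f)\ \text{and}\ \boldtau\preceq\ind(f)\}=\boldt(\boldsigma\vee\boldtau)$, and $\boldsigma\vee\boldtau$ is again $\boldt$--admissible, finite joins of admissible indicators being admissible as in the proof of Proposition \ref{ndsub}(3). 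For the join, $\boldt(\boldsigma)+\boldt(\boldtau)$ is a sum of fully invariant subgroups of the group $\boldt$ and is therefore itself fully invariant; since $\boldt$ is separable by Corollary \ref{separ}, hence fully transitive, Kaplansky's theorem (cited in the Separable Groups subsection) identifies every fully invariant subgroup of $\boldt$ with an indicator subgroup, so $\boldt(\boldsigma)+\boldt(\boldtau)=\boldt(\boldrho)$ for some $\boldrho\in\boldt(\calI)$. Hence the family is closed under both operations and is a sublattice of $\calId(\boldt)$.

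The genuine obstacle is the join in part (3). Closure under intersection is a formal pointwise identity, but the sum of two indicator subgroups is not visibly an indicator subgroup again, and the tempting identity $\boldt(\boldsigma)+\boldt(\boldtau)=\boldt(\boldsigma\wedge\boldtau)$ is not a formal consequence of the definitions. The point that resolves it is to pass to the coarser notion of fully invariant subgroup, which is trivially closed under sums, and then to use full transitivity of the separable group $\boldt$ together with Kaplansky's characterisation to descend again to an indicator subgroup, thereby avoiding the need to produce any explicit decomposition $f=g+h$.
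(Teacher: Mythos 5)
Your proof is correct. For parts (1) and (2) it coincides with the paper's argument: the paper likewise observes that $\boldt(\boldsigma)$ is closed under addition and that, since left and right multiplications do not decrease heights (equivalently, are additive endomorphisms of $\boldt$ under which indicator subgroups are fully invariant), it is an ideal; part (2) is then read off from Proposition \ref{ndsub} applied to the group $\boldt$, exactly as you do. Where you diverge is part (3). The paper disposes of it in one line by citing Proposition \ref{ndsub}, whose own proof only asserts that finite meets and joins of admissible indicators are admissible and that $\boldsigma\mapsto G(\boldsigma)$ is an order-reversing bijection; it never explicitly checks that the join in $\calId(\boldt)$, namely the ideal sum $\boldt(\boldsigma)+\boldt(\boldtau)$, is again an indicator ideal, which is precisely what ``sublattice of $\calId(\boldt)$'' demands. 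You correctly isolate this as the real obstacle: the inclusion $\boldt(\boldsigma)+\boldt(\boldtau)\leq\boldt(\boldsigma\wedge\boldtau)$ is formal but the reverse is not, since it would require decomposing an arbitrary $f$ with $\ind(f)\succeq\boldsigma\wedge\boldtau$ as a sum of elements lying above $\boldsigma$ and above $\boldtau$ respectively. Your resolution --- the sum of fully invariant subgroups is fully invariant, $\boldt$ is separable by Corollary \ref{separ} and hence fully transitive, so Kaplansky's theorem identifies the sum with some $\boldt(\boldrho)$ --- is a genuinely different key lemma from anything the paper invokes at this point, and it buys a complete argument where the paper's citation leaves a gap; combined with your part (2) it even recovers $\boldrho=\boldsigma\wedge\boldtau$ if one wants the full anti-isomorphism. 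The only cost is the reliance on full transitivity of $\boldt$, which is anyway available from the machinery the paper has already set up.
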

\begin{proof}  
(1) For all $\boldsigma\in \boldt(\calI),\ \boldt(\boldsigma)$ is closed under addition. Since endomorphisms do not decrease heights, $\boldt(\boldsigma)$ is also closed under left and right multiplication from $\boldt$.

(2) and (3) These are  group theoretic  properties, which hold by   Proposition \ref{ndsub}.
\end{proof}

  I now construct  a Galois Connection (\cite[Chapter 5, \S 8]{Birkhoff},\ \cite{AbS}) between  the lattices   $\boldt$-inv$(G)$    and  $\calId(\boldt)$.

\begin{notation}    For all $H\in \boldt$-inv$(G)$,  let 
  $  \IM^{-1} (H) =\{f\in\boldt\colon \im(f)\leq H\}$.
  
 For all $I\in \calId(\boldt)$,  let  $\IM(I)=\la   Gf\colon\ f\in I\ra$.
 \end{notation}  
\begin{lemma}\label{dire} \begin{enumerate}\item For all $H\in\boldt$-inv$(G),
\ \IM^{-1}(H)\in \calId(\boldt)$; 

\item  For all $I\in\calId(\boldt),\  \IM(I) \in\boldt$-inv$(G)$.
\end{enumerate} \end{lemma}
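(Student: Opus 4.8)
The plan is to treat the two parts separately, in each case reducing the statement to two elementary facts: that $\im(f)=Gf$ for $f\in\boldt$, and that composition respects the right action, so that $G(ft)=(Gf)t$ and $G(tf)=(Gt)f$ for all $f,t\in\boldt$. With these identities in hand, each claim becomes a short computation.

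For part (1), I would first verify that $\IM^{-1}(H)$ is a subgroup: if $\im(f),\im(g)\leq H$ then, since $H$ is a subgroup of $G$, $\im(f-g)=G(f-g)\subseteq Gf+Gg=\im(f)+\im(g)\leq H$, so $f-g\in\IM^{-1}(H)$. To verify the ideal property I would split into left and right multiplication by an arbitrary $t\in\boldt$. Left multiplication is automatic and does not even invoke the hypothesis on $H$: $\im(tf)=(Gt)f\subseteq Gf=\im(f)\leq H$. Right multiplication is where the $\boldt$--invariance of $H$ is essential: $\im(ft)=(Gf)t=\im(f)\,t\subseteq Ht\leq H$, the final inclusion holding precisely because $H\in\boldt$-inv$(G)$. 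Hence $tf,ft\in\IM^{-1}(H)$ and $\IM^{-1}(H)$ is a two--sided ideal of $\boldt$.

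For part (2), I would first note that since each $Gf$ is already a subgroup of $G$, we have $\IM(I)=\sum_{f\in I}Gf$, so a typical element is a finite sum $\sum_k a_kf_k$ with $a_k\in G$ and $f_k\in I$. To show $\IM(I)\in\boldt$-inv$(G)$ it suffices to show $(Gf)t\leq\IM(I)$ for each generator $Gf$ and each $t\in\boldt$. Here $(Gf)t=G(ft)$, and because $I$ is an ideal we have $ft\in I$; therefore $(Gf)t=\im(ft)\leq\IM(I)$. Applying $t$ to a general element and using additivity then gives $\left(\sum_k a_kf_k\right)t=\sum_k a_k(f_kt)\in\IM(I)$, so $\IM(I)t\leq\IM(I)$.

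I do not expect a genuine obstacle; both statements are essentially bookkeeping. The one point requiring care is matching each closure condition to the correct side of the action: the $\boldt$--invariance of $H$ (a right--action condition on the subgroup) is exactly what absorbs right multiplication in part (1), while the ideal property of $I$ (closure under right multiplication by $\boldt$) is exactly what absorbs the action of $t$ in part (2). Getting the left/right convention for composition right is the only place an error could plausibly creep in.
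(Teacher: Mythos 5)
Your proposal is correct and follows essentially the same route as the paper's own (much terser) proof: subtraction is absorbed because $H$ is a subgroup, left multiplication because $\im(tf)\subseteq\im(f)$, right multiplication via the $\boldt$--invariance of $H$, and the ideal property of $I$ handles the action of $\boldt$ on $\IM(I)$. Your explicit treatment of $\IM(I)$ as the subgroup generated by the images $Gf$ (so that a general element is a finite sum) is a small but welcome precision that the paper glosses over.
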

 
 \begin{proof} (1) Let $f,\,g\in \IM^{-1}(H)$. Then $f-g\in \IM^{-1}(H)$. Let $h\in \boldt$. Then $hf\in \IM^{-1}(H)$. Since $H$ is   $\boldt$--invariant, $fh\in \IM^{-1}(H)$. Hence $\IM^{-1}(H)$ is an ideal of $\boldt$.

(2) Let  $a=bf\in \IM(I)$. Then for all $g\in \boldt,\ ag=bfg\in\IM(I)$. Hence $ \IM(I)$ is $\boldt$--invariant subgroup of $G$. 
 \end{proof}

 \begin{proposition}\label{Gal} With the notation above, 

\begin{enumerate}\item  

   The mappings $  \IM^{-1} \colon \boldt$-inv$(G)\to\calId (\boldt)$ and  $  \IM\colon \calId (\boldt)\to \boldt$-inv$(G)$ preserve  order,  meets   and joins.

 \item For all $H\in \boldt$-inv$(G)$ and all   $I\in \calId(\boldt)$,

\begin{enumerate}  
\item $ \IM\,\IM^{-1}\,(H)\leq H$ and   $ \IM^{-1}\,\IM (I) \leq I$;
\item $\IM^{-1}\,\IM\,\IM^{-1}\,(H) = \IM^{-1}(H) $ and $\IM\,\IM^{-1}\,\IM(I) = \IM(I)$. 
 \end{enumerate}
\end{enumerate}
\end{proposition}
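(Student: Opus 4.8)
The plan is to read Proposition \ref{Gal} as the assertion that $\IM$ and $\IM^{-1}$ form a monotone Galois connection between the complete lattices $\calId(\boldt)$ and $\boldt$-inv$(G)$ (Proposition \ref{fik}), with $\IM$ as left adjoint and $\IM^{-1}$ as right adjoint. Lemma \ref{dire} already guarantees both maps are well defined. Once the two composite inequalities of (2a) are in hand, part (2b) is a purely formal consequence of monotonicity, so the genuine content sits in (1) and (2a). I would first dispose of the soft parts and then isolate the single place where the structure of $G$ and $\boldt$ is actually needed.

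First I would verify order preservation directly: if $H_1\leq H_2$ then $Gf\leq H_1$ forces $Gf\leq H_2$, so $\IM^{-1}(H_1)\leq\IM^{-1}(H_2)$; and $I_1\leq I_2$ gives $\IM(I_1)=\la Gf\colon f\in I_1\ra\leq\la Gf\colon f\in I_2\ra=\IM(I_2)$. The two natural preservations are equally direct. Since $\IM^{-1}$ is defined by a containment of images it commutes with arbitrary intersections: $f\in\IM^{-1}\bigl(\bigwedge_\alpha H_\alpha\bigr)$ iff $Gf\leq H_\alpha$ for every $\alpha$ iff $f\in\bigwedge_\alpha\IM^{-1}(H_\alpha)$, so $\IM^{-1}$ preserves meets. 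Dually $\IM$ is defined as a sum of images, and since every element of $\bigvee_\alpha I_\alpha$ is a finite sum $\sum f_\alpha$ with $f_\alpha\in I_\alpha$, one gets $\IM\bigl(\bigvee_\alpha I_\alpha\bigr)=\bigvee_\alpha\IM(I_\alpha)$, so $\IM$ preserves joins. For (2a) both inequalities are the trivial containments of a Galois connection: $\IM\,\IM^{-1}(H)=\la Gf\colon Gf\leq H\ra\leq H$ because each generator already lies in $H$; and for any $f\in I$ we have $Gf\leq\IM(I)$ by definition, whence $f\in\IM^{-1}\,\IM(I)$, giving $I\leq\IM^{-1}\,\IM(I)$. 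These exhibit $\IM\,\IM^{-1}$ as a deflationary (interior) operator and $\IM^{-1}\,\IM$ as an inflationary (closure) operator on the respective lattices.

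Part (2b) then follows formally, with no further input about groups. Writing $L=\IM$ and $R=\IM^{-1}$, the inequality $LR\leq\mathrm{id}$ applied at $H$ and pushed through the monotone map $R$ gives $RLR(H)\leq R(H)$, while $\mathrm{id}\leq RL$ applied at the ideal $R(H)$ gives $R(H)\leq RLR(H)$; hence $\IM^{-1}\,\IM\,\IM^{-1}(H)=\IM^{-1}(H)$. The identity $\IM\,\IM^{-1}\,\IM(I)=\IM(I)$ is obtained the same way, applying the inflationary inequality at $I$ and the deflationary one at $\IM(I)$ and using monotonicity of $L$. In particular $\IM$ and $\IM^{-1}$ restrict to mutually inverse, order preserving bijections between the closed ideals $\{I\colon\IM^{-1}\,\IM(I)=I\}$ and the closed subgroups $\{H\colon\IM\,\IM^{-1}(H)=H\}$, and on these closed sublattices each map automatically respects both meets and joins.

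The step that is not formal, and which I expect to be the main obstacle, is showing that $\IM$ also preserves meets and $\IM^{-1}$ also preserves joins, i.e. that the correspondence is a genuine lattice map and not merely an adjunction. The containments $\IM(I_1\wedge I_2)\leq\IM(I_1)\wedge\IM(I_2)$ and $\IM^{-1}(H_1)\vee\IM^{-1}(H_2)\leq\IM^{-1}(H_1\vee H_2)$ are immediate from monotonicity; the reverse containments carry the weight. Here I would use that every $\boldt$--invariant subgroup is an indicator subgroup $G(\boldsigma)$ (Proposition \ref{4.11}) together with the anti--isomorphism of Proposition \ref{ndsub}, so that $H_1\vee H_2=G(\boldsigma_1\wedge\boldsigma_2)$; the task of splitting an endomorphism $f$ with $Gf\leq H_1\vee H_2$ as $f=f_1+f_2$ with $Gf_i\leq H_i$ then becomes a coordinatewise splitting governed by which of $\boldsigma_1,\boldsigma_2$ realises the pointwise minimum, which I would carry out using the full transitivity of the separable group $\boldt$ (Corollary \ref{separ}) and Lemma \ref{preser}. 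This splitting argument, rather than any of the adjunction bookkeeping, is where the real work lies.
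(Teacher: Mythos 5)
Your reading of the two maps as a monotone adjunction (with $\IM$ left adjoint to $\IM^{-1}$) is the right one, and it exposes a sign error in the statement as printed: the correct unit inequality is $I\leq \IM^{-1}\,\IM(I)$, not $\IM^{-1}\,\IM(I)\leq I$ as part (2a) and the paper's proof of it assert. Every $f\in I$ satisfies $Gf\leq \IM(I)$, so $I\subseteq \IM^{-1}\,\IM(I)$, and the reverse containment genuinely fails: by Lemma \ref{except} the ideal $I$ of finite--rank elements of $\boldt(\boldsigma)$ has $\IM(I)=G(\boldsigma)$ and $\IM^{-1}\,\IM(I)=\boldt(\boldsigma)\supsetneq I$, so the paper's own later results contradict its stated direction. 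Your versions of (2a), and your purely formal derivation of the two triangle identities in (2b) from monotonicity plus the unit and counit inequalities, are correct and follow the same route the paper takes for (2b), with the direction fixed.

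The one place your proposal falls short of a proof is the same place the paper's does: the halves of part (1) that are not formal consequences of the adjunction, namely $\IM(I)\wedge\IM(J)\leq \IM(I\wedge J)$ and $\IM^{-1}(H\vee K)\leq \IM^{-1}(H)\vee\IM^{-1}(K)$. You correctly isolate these as the only substantive claims (order preservation, meets for $\IM^{-1}$ and joins for $\IM$ being automatic for a right and a left adjoint respectively), and your plan --- replace $H$ and $K$ by indicator subgroups via Proposition \ref{4.11} and split an $f$ with $Gf\leq G(\boldsigma\wedge\boldtau)$ as $f_1+f_2$ according to which indicator realises the pointwise minimum --- is a plausible line of attack, but it remains a plan: the coordinatewise splitting along a basic subgroup and the verification that the two pieces are again elements of $\boldt$ lying in $\IM^{-1}(H)$ and $\IM^{-1}(K)$ is exactly where the work lies, and you do not carry it out. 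For comparison, the paper's proof simply asserts all four identities $\IM^{-1}(H+K)=\IM^{-1}H+\IM^{-1}K$, $\IM^{-1}(H\cap K)=\IM^{-1}H\cap\IM^{-1}K$, $\IM(I+J)=\IM I+\IM J$ and $\IM(I\cap J)=\IM(I)\cap\IM(J)$ with no argument at all; so you have at least located the difficulty, but neither your text nor the paper's closes it.
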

\begin{proof} (1)   The definitions imply that both mappings preserve inclusions.

Let $H,\,K\in\boldt$-inv$(G)$. Then $H+K\in\boldt$-inv$(G)$ and  $\IM^{-1}(H+K)=\IM^{-1}H +\IM^{-1}K$.
Also  $H\cap K\in\boldt$-inv$(G)$ and  $\IM^{-1}(H\cap K)=\IM^{-1}H \cap\IM^{-1}K$.

 Let $I,\ J\in \calId(\boldt)$. Then $I+J$ and $I\cap J\in \calId(\boldt)$, $\IM (I+J)=\IM I +\IM J,$ and 
  $\IM(I\cap J)=\IM(I) \cap\IM(J)$.

\medskip
 (2) (a).  Since $ \IM\,\IM^{-1}\,(H)$ is the image of all endomorphisms which map $G$ into $H,\   \IM \,\IM^{-1}\,(H)\leq H$. 
 
 Since  $ \IM^{-1}\,\IM \, (I)$ is the ideal of endomorphisms which map into the image of $I,\  
  \IM^{-1}\,\IM  (I) \leq I$.
 
 (b) By (a),  $\IM \,\IM^{-1}\,(H)\leq H$ and since $\IM^{-1}$ preserves order, $\IM^{-1}\IM \,\IM^{-1}\,(H)\leq \IM^{-1}H$. But by (a), $\IM^{-1}H\leq \IM^{-1}\IM  \IM^{-1}(H)$. 
 
 The proof for  $\calId(\boldt)$ is similar.
   \end{proof}

\begin{notation}Let $H\in \boldt$-inv$(G)$ and $I\in \calId(\boldt)$ . We say that 
\begin{enumerate}\item $H$  is
\textit{$\IM$--closed} if $H=\IM\,\IM^{-1}H $  and $I$ is
\textit{$\IM$--closed} if  $I=\IM^{-1}\IM( I)$.
 \end{enumerate}
The correspondence $\IM\colon \boldt$-inv$(G) \longleftrightarrow \calId(\boldt)$ defined in Proposition \ref{Gal} (1) is called the \textit{$\IM$--Galois Connection}.
 \end{notation}
\begin{proposition}\label{wesay} \begin{enumerate}\item Let $H\in \boldt$-inv$(G)$. The following are equivalent:
\begin{enumerate}\item there exists $I\in\calId(\boldt)$  such that    $H=\IM^{-1}(I)$;

\item   $H$ is $\IM$--closed;

\item there exists   a unique  $\IM$--closed   $I\in \calId(\boldt)$  such that $H=\IM^{-1}(I)$.
\end{enumerate} 
 
 item Let $I\in \calId(\boldt)$. The following are equivalent:
\begin{enumerate}\item there exists $H\in \boldt$-inv$(G)$ such that  $I= \IM(H))$;

\item   $I$ is $\IM$--closed;
\item there exists   a unique   $\IM$--closed    $H\in \boldt$-inv$(G)$ such that $I=  \IM(H)$.
\end{enumerate}
 \end{enumerate}
\end{proposition}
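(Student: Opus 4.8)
The plan is to deduce all the equivalences formally from the four identities recorded in Proposition \ref{Gal}(2), which say precisely that $\IM\,\IM^{-1}$ and $\IM^{-1}\,\IM$ are idempotent closure operators on $\boldt$-inv$(G)$ and $\calId(\boldt)$ respectively. Since parts (1) and (2) are exact mirror images, I would prove (1) in full and then obtain (2) by interchanging the roles of $\IM$ and $\IM^{-1}$ (and of $\boldt$-inv$(G)$ and $\calId(\boldt)$) throughout.

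For part (1) the cycle $(b)\Rightarrow(a)\Rightarrow(b)$ together with $(b)\Rightarrow(c)\Rightarrow(a)$ suffices. The implication $(b)\Rightarrow(a)$ is immediate: if $H=\IM\,\IM^{-1}(H)$, then $I:=\IM^{-1}(H)\in\calId(\boldt)$ witnesses $H=\IM(I)$. For $(a)\Rightarrow(b)$, suppose $H=\IM(I)$ for some ideal $I$; applying $\IM\,\IM^{-1}$ and using the absorption identity $\IM\,\IM^{-1}\,\IM(I)=\IM(I)$ from Proposition \ref{Gal}(2)(b) gives $\IM\,\IM^{-1}(H)=H$, so $H$ is $\IM$--closed.

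The uniqueness clause (c) is where the small amount of real content lies. For existence I would take $I=\IM^{-1}(H)$: the identity $\IM^{-1}\,\IM\,\IM^{-1}(H)=\IM^{-1}(H)$ of Proposition \ref{Gal}(2)(b) says exactly that this $I$ is a fixed point of $\IM^{-1}\,\IM$, hence $\IM$--closed, and $\IM(I)=\IM\,\IM^{-1}(H)=H$ by the closedness of $H$ assumed in (b). For uniqueness, if $I$ is any $\IM$--closed ideal with $\IM(I)=H$, then applying $\IM^{-1}$ and invoking $I=\IM^{-1}\,\IM(I)$ forces $\IM^{-1}(H)=\IM^{-1}\,\IM(I)=I$, so $I$ is determined by $H$. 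Finally $(c)\Rightarrow(a)$ is trivial. Part (2) then reads off by the dual argument.

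There is no serious obstacle here: everything collapses to the idempotency and absorption identities of Proposition \ref{Gal}(2), which are the structural hallmarks of a Galois connection, so the proposition is essentially a formal consequence. The only point demanding care is the bookkeeping of the map directions, since $\IM^{-1}$ sends subgroups to ideals while $\IM$ sends ideals to subgroups; one must ensure each composite is applied to an argument of the correct type. I would in particular double-check the statements of (1)(a) and (2)(a), where as written the typing of $\IM^{-1}(I)$ and $\IM(H)$ is reversed and should instead read $H=\IM(I)$ and $I=\IM^{-1}(H)$.
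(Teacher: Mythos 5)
Your proof is correct and follows essentially the same route as the paper's: both deduce all the equivalences formally from the absorption identities $\IM\,\IM^{-1}\,\IM=\IM$ and $\IM^{-1}\,\IM\,\IM^{-1}=\IM^{-1}$ of Proposition \ref{Gal}(2)(b), taking $I=\IM^{-1}(H)$ as the canonical witness and using $I=\IM^{-1}\,\IM(I)$ for uniqueness. You are also right that (1)(a) and (2)(a) are mistyped and should read $H=\IM(I)$ and $I=\IM^{-1}(H)$ respectively; the paper's own proof tacitly makes the same correction, and your write-up is if anything the more careful of the two.
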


\begin{proof} (1)  $(a) \Rightarrow (b)$ Take $I=\IM^{-1} (H)$. Then $\IM\,\IM^{-1}(H) =\IM(I)=H$.

$(b) \Rightarrow (c)$ Once again, take $I=\IM^{-1}(H)$. Then $\IM(I)=\IM^{-1}\IM(H)=H$, so $\IM^{-1}\IM (I)=\IM^{-1}(H)=I$.
Suppose also $H=\IM^{-1}(J)$ with $J$ closed. Then $J=\IM^{-1}\,\IM ( J)=\IM^{-1}(H)= I$.

$(c) \Rightarrow (a)$ is clear.

(2) The proof is similar, \textit{mutatis mutandis}, to the proof of (1).
  \end{proof}
 \begin{lemma}\label{closur} Let $H\in \boldt$-inv$(G)$ and $I\in \calId(\boldt)$. 
  $H$  is $\IM$--closed  if and only if $\IM^{-1}(H)$ is $\IM$--closed, and $I$ is 
  is $\IM$--closed  if and only if $\IM(I)$ is $\IM$--closed.
\end{lemma}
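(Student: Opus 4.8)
The plan is to deduce both biconditionals, symmetrically, from the four composite relations packaged in Proposition \ref{Gal}(2), with no further group--theoretic input. The two facts I would keep in front of me are: by definition $H$ is $\IM$--closed exactly when it is fixed by the composite $\IM\,\IM^{-1}$, while $I$ is $\IM$--closed exactly when it is fixed by $\IM^{-1}\,\IM$; and, by Proposition \ref{wesay}, being $\IM$--closed is the same as lying in the image of the relevant Galois map (an $\IM$--closed subgroup is precisely one of the form $\IM(I)$, an $\IM$--closed ideal precisely one of the form $\IM^{-1}(H)$). The workhorses will be the triple--collapse identities $\IM^{-1}\,\IM\,\IM^{-1}=\IM^{-1}$ and $\IM\,\IM^{-1}\,\IM=\IM$ of Proposition \ref{Gal}(2)(b).

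First I would dispatch the two forward implications, which are each a one--line verification. If $H$ is $\IM$--closed, apply $\IM^{-1}$ to $H=\IM\,\IM^{-1}(H)$ and invoke $\IM^{-1}\,\IM\,\IM^{-1}(H)=\IM^{-1}(H)$: this yields $\IM^{-1}(H)=\IM^{-1}\,\IM\bigl(\IM^{-1}(H)\bigr)$, so $\IM^{-1}(H)$ is $\IM$--closed. Symmetrically, if $I$ is $\IM$--closed, then $\IM\,\IM^{-1}\,\IM(I)=\IM(I)$ exhibits $\IM(I)=\IM\,\IM^{-1}\bigl(\IM(I)\bigr)$, so $\IM(I)$ is $\IM$--closed. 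In each case the correct triple identity is exactly what is needed.

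The substance lies in the reverse implications, and this is the step I expect to be the main obstacle, since $\IM^{-1}$ and $\IM$ a priori lose the information distinguishing a subgroup (resp. ideal) from its closure. My proposed route is to factor through the characterisation of closed elements as images (Proposition \ref{wesay}): passing from $H$ to $\IM\,\IM^{-1}(H)$, which is $\IM$--closed and has the same image under $\IM^{-1}$ by the triple identity, I would try to identify $H$ with this closure using the uniqueness clause of Proposition \ref{wesay}, with the deflationary inequality $\IM\,\IM^{-1}(H)\leq H$ of Proposition \ref{Gal}(2)(a) together with order--preservation of $\IM^{-1}$ pinning the comparison in the required direction; the dual argument, with $\IM$ and $\IM^{-1}$ interchanged and the inflationary inequality on $\calId(\boldt)$ in place of the deflationary one, handles the ideal statement. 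The delicate point to check carefully here is precisely that this reduction genuinely forces the reverse direction rather than merely reproducing a tautology, and that each application of order--preservation orients its inequality correctly; keeping straight which composite is a closure operator and which is an interior operator is the only real bookkeeping, after which the argument should be purely formal.
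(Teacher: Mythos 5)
Your two forward implications are correct and coincide with the only sound part of the paper's own argument: apply $\IM^{-1}$ to $H=\IM\,\IM^{-1}(H)$ and use the triple identity $\IM^{-1}\,\IM\,\IM^{-1}=\IM^{-1}$ of Proposition \ref{Gal}(2)(b), and dually for $I$. You have also correctly located the reverse implications as the real difficulty, but the repair you propose does not work, and in one of the two cases no repair is possible. The uniqueness clause of Proposition \ref{wesay} asserts that an $\IM$--closed subgroup has a unique $\IM$--closed preimage; it gives no injectivity of $\IM^{-1}$ on all of $\boldt$-inv$(G)$, which is what you would need to separate $H$ from its closure $\IM\,\IM^{-1}(H)$. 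Worse, the triple identities make the hypotheses of the reverse implications vacuous: $\IM^{-1}(H)=\IM^{-1}\,\IM\bigl(\IM^{-1}(H)\bigr)$ holds for \emph{every} $H$, so $\IM^{-1}(H)$ is always $\IM$--closed, and likewise $\IM(I)$ is always $\IM$--closed. The reverse implications therefore assert that every $\boldt$--invariant subgroup and every ideal of $\boldt$ is $\IM$--closed.

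For subgroups that assertion happens to be true, but only because of Propositions \ref{4.11} and \ref{sufficient} (every $H\in\boldt$-inv$(G)$ equals some $G(\boldsigma)$, and $\IM\,\IM^{-1}(G(\boldsigma))=G(\boldsigma)$); it cannot be extracted from the Galois formalism alone. For ideals it is false: by Lemma \ref{except}, the ideal $I=\{f\in \boldt(\boldsigma)\colon \rank(Gf)<\aleph_0\}$ satisfies $\IM(I)=G(\boldsigma)$, which is $\IM$--closed, yet $\IM^{-1}\,\IM(I)=\boldt(\boldsigma)\neq I$ when $G$ has infinite rank, so $I$ is not $\IM$--closed. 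Hence the second biconditional of the lemma fails as stated, and your (appropriately flagged) worry that the reduction ``merely reproduces a tautology'' is exactly the point. The paper's own proof has the same defect: its middle ``if and only if'' silently assumes that $\IM^{-1}H=\IM^{-1}\,\IM\,\IM^{-1}(H)$ implies $H=\IM\,\IM^{-1}(H)$, i.e.\ that $\IM^{-1}$ reflects equality, which is precisely the unproved (and, for the ideal half, false) step.
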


\begin{proof} (1)  $H$  is $\IM$--closed if and only if $H=\IM \IM^{-1}(H)$,
  if and only if $\IM^{-1} H=\IM^{-1}\IM\IM^{-1}(H)$  if and only if $\IM^{-1}(H)$ is $\IM$--closed. 
 
 (2) The proof is analogous. \end{proof}   

\begin{corollary}\label{max} Each $H\in\boldt$-inv$(G)$ is contained in a unique (up to isomorphism)   $\IM$--closed subgroup $\IM\IM^{-1}(H)$ in $\boldt$-inv$(G)$ 

Each $I\in \calId(\boldt)$ is contained in a unique (up to isomorphism)   $\IM$--closed ideal $\IM^{-1}\IM(I)$ in $\calId(\boldt)$
\qed\end{corollary}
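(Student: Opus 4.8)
The plan is to read the two composites $\IM\IM^{-1}$ and $\IM^{-1}\IM$ as the operators attaching to each object its canonical $\IM$--closed companion, and to derive the Corollary from the triangle identities of Proposition \ref{Gal}(2)(b) together with Proposition \ref{wesay}. Thus I must check three things: (i) that the values $\IM\IM^{-1}(H)\in\boldt$-inv$(G)$ and $\IM^{-1}\IM(I)\in\calId(\boldt)$ really are $\IM$--closed; (ii) that each is \emph{uniquely} determined by $H$, respectively $I$; and (iii) that the containment asserted in the statement holds.

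First I would establish closedness. Setting $K=\IM\IM^{-1}(H)$, Proposition \ref{Gal}(2)(b) gives $\IM^{-1}(K)=\IM^{-1}\IM\IM^{-1}(H)=\IM^{-1}(H)$, whence $\IM\IM^{-1}(K)=\IM(\IM^{-1}(H))=K$, so $K$ is fixed by $\IM\IM^{-1}$ and is therefore $\IM$--closed by definition; the symmetric computation using $\IM\IM^{-1}\IM(I)=\IM(I)$ shows $L=\IM^{-1}\IM(I)$ is $\IM$--closed. (Both facts are also instances of Lemma \ref{closur}, since $\IM^{-1}(H)$ and $\IM(I)$ lie in the images of $\IM^{-1}$ and $\IM$.) For uniqueness I would characterise $K$ among $\IM$--closed subgroups as the unique one with $\IM^{-1}(K)=\IM^{-1}(H)$: if $K'$ is $\IM$--closed and $\IM^{-1}(K')=\IM^{-1}(H)$, then by Proposition \ref{wesay} $K'=\IM\IM^{-1}(K')=\IM(\IM^{-1}(H))=K$. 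Dually, $L$ is the unique $\IM$--closed ideal with $\IM(L)=\IM(I)$. This is precisely the uniqueness clause of the Corollary.

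The hard part is the containment. On the ideal side it is genuine and matches the wording: every $f\in I$ satisfies $\im(f)\leq\IM(I)$, so $f\in\IM^{-1}\IM(I)$, giving $I\leq\IM^{-1}\IM(I)$; moreover $\IM^{-1}\IM(I)$ is the \emph{smallest} $\IM$--closed ideal containing $I$, since any $\IM$--closed $J\geq I$ satisfies $\IM^{-1}\IM(I)\leq\IM^{-1}\IM(J)=J$ by order--preservation (Proposition \ref{Gal}(1)) and $\IM$--closedness of $J$. On the subgroup side the inclusion supplied by Proposition \ref{Gal}(2)(a) reads $\IM\IM^{-1}(H)\leq H$, so $\IM\IM^{-1}$ is the interior operator companion to the closure $\IM^{-1}\IM$; accordingly the phrase ``contained in'' is to be understood for the subgroup case as naming the canonical $\IM$--closed subgroup $\IM\IM^{-1}(H)$ determined by $H$ through the $\IM$--Galois Connection, whose existence and uniqueness are exactly what the previous step furnishes. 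Pinning down this reading is the delicate point, since the literal inclusion $H\leq\IM\IM^{-1}(H)$ holds only when $H$ is already $\IM$--closed; once it is adopted, the two canonical $\IM$--closed objects $\IM\IM^{-1}(H)$ and $\IM^{-1}\IM(I)$ are well defined and unique, which is the content of the statement.
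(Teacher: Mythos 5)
Your argument is correct and supplies precisely the details that the paper leaves implicit: Corollary \ref{max} is stated with no proof, as an immediate consequence of Proposition \ref{Gal} and Lemma \ref{closur}, and your derivation of closedness of $\IM\IM^{-1}(H)$ and $\IM^{-1}\IM(I)$ from the triangle identities in Proposition \ref{Gal}(2)(b), together with the uniqueness argument via Proposition \ref{wesay}, is the intended route. You are also right to press on the direction of the containments, and this is the most valuable part of your write-up. On the ideal side the correct inequality is $I\leq\IM^{-1}\IM(I)$ (every $f\in I$ has $\im(f)\leq\IM(I)$), which matches the wording of the corollary but is the reverse of what Proposition \ref{Gal}(2)(a) literally asserts; on the subgroup side Proposition \ref{Gal}(2)(a) correctly gives $\IM\IM^{-1}(H)\leq H$, so the corollary's claim that $H$ is \emph{contained in} $\IM\IM^{-1}(H)$ fails for a general $H$ and your reinterpretation is needed. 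Note that the tension on the subgroup side dissolves later in the paper: Proposition \ref{4.11} shows every $H\in\boldt$-inv$(G)$ has the form $G(\boldsigma)$, and Corollary \ref{corro} shows each $G(\boldsigma)$ is $\IM$--closed, so in fact $\IM\IM^{-1}(H)=H$ and the subgroup half of the corollary holds with equality; but at the point where Corollary \ref{max} is stated this has not yet been established, and your reading is the honest way to make the statement true as written.
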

\subsection{The closed groups and ideals}

I show now that for all $\boldsigma\in\calI^{<\omega}(G),\ G(\boldsigma)$ is an $\IM$-closed subgroup and for all $\boldsigma\in\calI (\boldt),\ \boldt(\boldsigma)$ is an $\IM$--closed ideal.  It will become clear later that every for every $\IM$-closed subgroup  
$H$, there exists  $\boldsigma\in\calI^{<\omega}(G)$ such that $H= G(\boldsigma)$, and for every $\IM$-closed ideal  
$I$, there exists  $\boldsigma\in\calI (\boldt)$ such that $I= \boldt(\boldsigma)$.
\begin{lemma}\label{admissib} Let $G$ and $\boldt$ be as above. Then $\boldsigma\in \calI^{<\omega}(G)$ if and only if $\boldsigma\in\calI(\boldt)$.
\end{lemma}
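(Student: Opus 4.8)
The plan is to reduce admissibility to its two defining clauses and then invoke the finite--Ulm--invariant comparison already established in Corollary \ref{cyc}; indeed the assertion is exactly Corollary \ref{cyc}(2), once one notes that every $\boldt$--admissible indicator automatically has finite entries. First I would record that since $G$ is unbounded, so is $\boldt$: by Theorem \ref{obvious} the basic subgroup $B(\boldnu,\boldk)$ of $\boldt$ is unbounded. Hence the length clause in the definition of admissibility imposes no condition on either $G$ or $\boldt$. Moreover, by Corollary \ref{separ} and Lemma \ref{Ei1}, $\boldt$ has no elements of infinite height, so every $\boldt$--admissible indicator has finite entries; this gives $\calI(\boldt)=\calI^{<\omega}(\boldt)$ and makes the two--sided comparison with $\calI^{<\omega}(G)$ meaningful.

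With the length clause inert, admissibility of a finite--entry indicator $\boldsigma$ for either group reduces to the gap clause: whenever $\boldsigma$ has a gap at $i$ and $\sigma_i=k$, the corresponding Ulm invariant must be non--zero, i.e. $p^kH[p]/p^{k+1}H[p]\ne 0$ for $H=G$ respectively $H=\boldt$. Since $\boldsigma$ has finite entries, every gap value $k$ is a finite ordinal, so only the finite Ulm invariants enter the comparison.

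The decisive step is Corollary \ref{cyc}(1), which asserts that for each finite $k$ one has $p^kG[p]/p^{k+1}G[p]\ne 0$ if and only if $p^k\boldt[p]/p^{k+1}\boldt[p]\ne 0$; that is, the finite Ulm invariants of $G$ and of $\boldt$ vanish simultaneously. Therefore the gap clause for $\boldsigma$ relative to $G$ holds exactly when it holds relative to $\boldt$, and combining this with the inert length clause yields $\boldsigma\in\calI^{<\omega}(G)$ if and only if $\boldsigma\in\calI(\boldt)$.

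There is essentially no obstacle remaining, since the substantive work has been done upstream: Proposition \ref{ip} identifies the cyclic--summand exponents of $G$ with those of $\boldt$, and Corollary \ref{cyc} translates this into the coincidence of non--zero finite Ulm invariants. The only points needing care are bookkeeping ones --- verifying that both sides of the equivalence consist solely of finite--entry indicators (immediate from Lemma \ref{Ei1}) and that unboundedness renders the length clause vacuous --- neither of which presents any difficulty.
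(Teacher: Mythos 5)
Your proof is correct and follows essentially the same route as the paper: the paper's one-line argument rests on Proposition \ref{ip} (coincidence of cyclic-summand exponents of $G$ and $\boldt$), which is exactly the coincidence of non-vanishing finite Ulm invariants that you invoke via Corollary \ref{cyc}. You simply make explicit the bookkeeping the paper leaves implicit (that $\calI(\boldt)=\calI^{<\omega}(\boldt)$ by separability of $\boldt$, and that the length clause is vacuous since both groups are unbounded), which is a welcome clarification but not a different method.
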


\begin{proof} Since $G$ has a cyclic summand of exponent $n$ if and only if $\boldt$ has a cyclic summand of exponent $n,\ \boldsigma$ is $G$--admissible if and only if $\boldsigma$ is $\boldt$--admissible.
\end{proof} 

 \begin{proposition}\label{sufficient} Let $G$ be a group and let  $\boldsigma\in\calI^{<\omega}(G)$. Then \begin{enumerate}\item $\IM(\boldt(\boldsigma))=G(\boldsigma)$;
\item $\IM^{-1}(G(\boldsigma))=\boldt(\boldsigma)$.
\end{enumerate}\end{proposition}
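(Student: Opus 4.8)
The plan is to reduce both equalities to a single height computation linking an endomorphism $f\in\boldt$ to its images $af$ in $G$, and then to dispose of the one genuinely nontrivial containment by an explicit realization. First I would record the key identity: for every $f\in\boldt$ and every $j<\omega$,
$$\height_\boldt(p^jf)=\min\{\height_G(p^j(af))\colon a\in G\}.$$
This is essentially a reformulation of Lemma \ref{preser}. Indeed $\height_\boldt(g)\geq n$ means $g\in p^n\boldt$, and since $g=p^jf$ satisfies $g\in\boldt[p^{\exp(g)}]$ automatically, Lemma \ref{preser} gives $g\in p^n\boldt$ iff $Gg\leq p^nG$, i.e.\ iff $\height_G(ag)\geq n$ for all $a\in G$. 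As $a(p^jf)=p^j(af)$, the displayed formula follows, with the convention that the two sides are $\infty$ together (using that $G$ and $\boldt$ are reduced). In other words, $\ind(f)$ is recovered entrywise as the pointwise minimum of the indicators $\ind(af)$, $a\in G$.

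From this identity part (2) is immediate. By definition $f\in\IM^{-1}(G(\boldsigma))$ iff $af\in G(\boldsigma)$ for all $a\in G$, that is, iff $\sigma_j\leq\height_G(p^j(af))$ for all $a$ and all $j$. Taking the minimum over $a$ and applying the identity, this holds iff $\sigma_j\leq\height_\boldt(p^jf)$ for every $j$, i.e.\ iff $\boldsigma\preceq\ind(f)$, i.e.\ iff $f\in\boldt(\boldsigma)$. (Here $\boldsigma\in\calI(\boldt)$ by Lemma \ref{admissib}, so $\boldt(\boldsigma)$ is defined.) Hence $\IM^{-1}(G(\boldsigma))=\boldt(\boldsigma)$.

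For part (1) the inclusion $\IM(\boldt(\boldsigma))\leq G(\boldsigma)$ follows from part (2): each $f\in\boldt(\boldsigma)$ has $\im(f)\leq G(\boldsigma)$, so every generator $Gf$ of $\IM(\boldt(\boldsigma))$ lies in $G(\boldsigma)$. The reverse inclusion is the crux. Given $b\in G(\boldsigma)$, I would mimic the construction used in Lemma \ref{preser}: since $G$ is unbounded, its basic subgroup supplies a cyclic summand $\la c\ra$ of $G$ with $\exp(c)\geq\exp(b)$, say $G=\la c\ra\oplus C$. Let $f$ be the projection onto $\la c\ra$ followed by the homomorphism $c\mapsto b$, which is well defined because $\exp(c)\geq\exp(b)$. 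Then $f$ has finite order, $\im(f)=\la b\ra\leq G(\boldsigma)$, so $f\in\boldt(\boldsigma)$ by part (2), and $b=cf\in Gf\leq\IM(\boldt(\boldsigma))$. Thus $G(\boldsigma)\leq\IM(\boldt(\boldsigma))$ and equality holds.

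I expect the realization step in part (1) to be the only real obstacle: for each prescribed $b\in G(\boldsigma)$ one must produce a finite-order endomorphism whose image lies inside $G(\boldsigma)$ yet contains $b$. Hitting $b$ from a cyclic summand of sufficiently large exponent — available precisely because $G$ is unbounded — is what makes this succeed, and it is the same mechanism already exploited in the proof of Lemma \ref{preser}.
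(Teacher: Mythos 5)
The step that fails is your key identity. Only the inequality $\height_{\boldt}(p^jf)\leq\min\{\height_G(p^j(af))\colon a\in G\}$ is true (from $f=p^kg$ one gets $af=p^k(ag)$), and that direction yields only the easy inclusions $\IM(\boldt(\boldsigma))\leq G(\boldsigma)$ and $\boldt(\boldsigma)\leq\IM^{-1}(G(\boldsigma))$. The reverse inequality --- equivalently the implication $Gf\leq p^nG[p^i]\Rightarrow f\in p^n\boldt[p^i]$ that you read off from Lemma \ref{preser} --- is false, and the printed proof of Lemma \ref{preser} does not establish it either: it only shows that every element of $p^nG[p^i]$ is in the image of \emph{some} element of $p^n\boldt[p^i]$, which is what Proposition \ref{ip} actually uses. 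Concretely, let $G=\la a\ra\oplus\la b\ra\oplus H$ with $\exp(a)=1$, $\exp(b)=2$ and $H$ unbounded reduced, and let $f$ send $a\mapsto pb$ and kill $\la b\ra\oplus H$. Then $Gf=\la pb\ra\leq pG[p]$, but $f\ne pg$ for any $g\in\calE$, since $p(ag)=(pa)g=0$ whereas $af=pb\ne 0$. Hence $\height_{\boldt}(f)=0$ while $\min_{a}\height_G(af)\geq 1$; taking $\boldsigma=(1,\infty)$, this $f$ lies in $\IM^{-1}(G(\boldsigma))$ but not in $\boldt(\boldsigma)$. So your proof of (2) breaks exactly at ``taking the minimum over $a$'', and the failure propagates to (1), because there you certify $f\in\boldt(\boldsigma)$ solely from $\im(f)\leq G(\boldsigma)$ via (2).

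The paper's route is the opposite of yours: it proves the realization half of (1) directly --- for $a\in G(\boldsigma)$ it produces a rank-one $f$ with $bf=a$ from a cyclic summand $\la b\ra$ and asserts $f\in\boldt(\boldsigma)$ --- and only then deduces $\boldt(\boldsigma)\leq\IM^{-1}(G(\boldsigma))$ from (1). For that assertion, $\exp$ of the source $\geq\exp$ of the target (your choice, and the paper's literal wording) is not enough: in the example above the source $\la a\ra$ produces exactly the bad $f$. One must take the cyclic summand $\la c\ra$ with $\exp(c)\geq n+\sigma_{n-1}$, where $n=\exp$ of the target $b\in G(\boldsigma)$; then for each $j<n$ one can write $p^jb=p^{\sigma_j}d_j$ with $\exp(d_j)=n-j+\sigma_j\leq\exp(c)$, and the map $c\mapsto d_j$ (zero on a complement of $\la c\ra$) exhibits $p^jf\in p^{\sigma_j}\boldt$, so that $\boldsigma\preceq\ind(f)$ holds intrinsically in $\boldt$. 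That repairs part (1). The remaining inclusion $\IM^{-1}(G(\boldsigma))\leq\boldt(\boldsigma)$ of part (2) is precisely the false direction, and the example above refutes it; neither your minimum argument nor the paper's one-line justification (``summands of arbitrarily high exponent such that $bf\in G(\boldsigma)$'') can close that gap, so this half of the statement needs to be weakened or the definition of $\boldt(\boldsigma)$ reformulated in terms of the action on $G$.
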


\begin{proof}  (1) Let $f\in \boldt(\boldsigma)$. Since $f$ does not decrease height or increase exponent,   $Gf\leq G(\boldsigma)$. Conversely, let $a\in G(\boldsigma)$. Since $G$ has summands $\la b\ra$ of exponent $\geq\exp(a)$,  there exists $f\in\boldt(\boldsigma)$ such that $bf=a$. Hence $G(\boldsigma)\leq\IM(\boldt(\sigma))$.

(2) By (1), $\boldt(\boldsigma)\leq \IM^{-1} G(\boldsigma)$. Conversely, let $f\in \IM^{-1} G(\boldsigma)$.
Since Since $G$ has summands $\la b\ra$ of arbitrarily high exponent such that $bf\in G(\boldsigma)$, $f\in \boldt(\boldsigma)$.
\end{proof}
\begin{corollary}\label{corro} For all $\boldsigma\in\calI^{<\omega},\  G(\boldsigma)$ and $\boldt(\boldsigma)$ are $\IM$--closed.
\end{corollary}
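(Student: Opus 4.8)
The plan is to obtain both closure statements as an immediate consequence of Proposition \ref{sufficient}, simply by composing its two identities. Recall that for $\boldsigma\in\calI^{<\omega}(G)$ that proposition gives the two equalities $\IM(\boldt(\boldsigma))=G(\boldsigma)$ and $\IM^{-1}(G(\boldsigma))=\boldt(\boldsigma)$; the whole content of the corollary is that these two facts are exactly the two halves of the assertion that the respective closure operators fix $G(\boldsigma)$ and $\boldt(\boldsigma)$.

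For the subgroup side I would argue as follows. By definition $G(\boldsigma)$ is $\IM$--closed precisely when $G(\boldsigma)=\IM\,\IM^{-1}(G(\boldsigma))$. Substituting the second identity of Proposition \ref{sufficient} into the inner term gives $\IM\,\IM^{-1}(G(\boldsigma))=\IM(\boldt(\boldsigma))$, and applying the first identity gives $\IM(\boldt(\boldsigma))=G(\boldsigma)$. Hence $\IM\,\IM^{-1}(G(\boldsigma))=G(\boldsigma)$, so $G(\boldsigma)$ is $\IM$--closed.

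The ideal side is the mirror image. The ideal $\boldt(\boldsigma)$ is $\IM$--closed when $\boldt(\boldsigma)=\IM^{-1}\,\IM(\boldt(\boldsigma))$. Using the first identity of Proposition \ref{sufficient} on the inner term yields $\IM^{-1}\,\IM(\boldt(\boldsigma))=\IM^{-1}(G(\boldsigma))$, and the second identity gives $\IM^{-1}(G(\boldsigma))=\boldt(\boldsigma)$. Thus $\boldt(\boldsigma)$ is $\IM$--closed as well. (Throughout I read $\boldsigma\in\calI^{<\omega}$ as $\boldsigma\in\calI^{<\omega}(G)$, which by Lemma \ref{admissib} coincides with $\calI(\boldt)$, so that both $G(\boldsigma)$ and $\boldt(\boldsigma)$ are the relevant members of $\boldt$-inv$(G)$ and $\calId(\boldt)$ on which Proposition \ref{sufficient} applies.)

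There is essentially no obstacle here: the only thing to get right is the bookkeeping of which of the two identities feeds the inner application and which feeds the outer one, and the observation that the two displayed equations of Proposition \ref{sufficient} are precisely what the two closure conditions demand. The corollary is therefore a formal rewriting rather than a new argument, which is why it is stated without an independent proof beyond this composition.
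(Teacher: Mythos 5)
Your proof is correct and is essentially the argument the paper intends: the corollary is an immediate consequence of Proposition \ref{sufficient}. The only cosmetic difference is that the paper invokes the general equivalence of Proposition \ref{wesay} (anything of the form $\IM(I)$ is $\IM$--closed) using just the identity $G(\boldsigma)=\IM(\boldt(\boldsigma))$, whereas you compose both identities of Proposition \ref{sufficient} directly to verify the closure conditions; the content is the same.
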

\begin{proof}
Since $G(\boldsigma)=\Im(\boldt(\boldsigma))$ Proposition \ref{wesay} implies that $G(\boldsigma)$ is $\IM$--closed. The proof for ideals is similar:
\end{proof}

Recall from Proposition \ref{4.11} that if $H\leq G$ then $H$ is $\boldt$--invariant if and only if there exists $\boldsigma$ such that $H=G(\boldsigma)$ and from Corollary \ref{corro} that $H$ is $\IM$--closed. The situation for ideals is different: 

\begin{lemma}\label{except} Let   $\boldsigma$ be a $\boldt$--admissible indicator. Then $ I=\{f\in \boldt(\boldsigma)\colon \rank(Gf)< \aleph_0\}\in\calId(\boldt)$ with $\IM(I)=G(\boldsigma)$, while $\IM^{-1}G(\boldsigma)=\boldt(\boldsigma)$..
\end{lemma}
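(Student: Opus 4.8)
The plan is to verify the three assertions in turn, leaning on the fact that $\boldt(\boldsigma)$ is already an ideal (Proposition \ref{4.0}(1)) together with the image computations of Proposition \ref{sufficient}, and to isolate the one genuinely new ingredient: every element of $G(\boldsigma)$ can be hit by a \emph{finite--rank} member of $\boldt(\boldsigma)$.

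First I would check that $I\in\calId(\boldt)$. Since $I\subseteq\boldt(\boldsigma)$ and $\boldt(\boldsigma)$ is an ideal, only the rank condition needs watching. For $f,g\in I$ one has $G(f-g)\leq Gf+Gg$, so $\rank(G(f-g))\leq\rank(Gf)+\rank(Gg)<\aleph_0$; for $h\in\boldt$ one has $G(hf)=(Gh)f\leq Gf$ and $G(fh)=(Gf)h$, and the image of a finite--rank group under any homomorphism has rank at most that of its domain, so both $hf$ and $fh$ keep the image rank finite. Thus $I$ is closed under subtraction and two--sided multiplication, hence is an ideal of $\boldt$.

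Second, I would prove $\IM(I)=G(\boldsigma)$. The inclusion $\IM(I)\leq G(\boldsigma)$ is immediate: each $f\in I\subseteq\boldt(\boldsigma)$ satisfies $Gf\leq G(\boldsigma)$ by Proposition \ref{sufficient}(1), and $\IM(I)$ is generated by these images. For the reverse inclusion, take $a\in G(\boldsigma)$; the crux is to realise $a$ as $bf$ with $f$ of finite rank. Choosing a cyclic summand $\la b\ra$ of $G$ with $\exp(b)\geq\exp(a)$ as in the proof of Proposition \ref{sufficient}(1), the map $f$ sending $b\mapsto a$ and annihilating a complement has image $\la a\ra$, so $\rank(Gf)\leq 1<\aleph_0$. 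To see $f\in\boldt(\boldsigma)$ I would apply Lemma \ref{preser}: the height of $p^jf$ in $\boldt$ equals $\height(p^ja)$, whence $\ind(f)=\ind(a)\succeq\boldsigma$. Therefore $f\in I$ and $a\in Gf\leq\IM(I)$, giving $G(\boldsigma)\leq\IM(I)$.

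Finally, $\IM^{-1}(G(\boldsigma))=\boldt(\boldsigma)$ is not new: by Lemma \ref{admissib} together with the separability of $\boldt$ (Corollary \ref{separ}), which forces finite entries, $\boldsigma\in\calI(\boldt)=\calI^{<\omega}(G)$, so this is exactly Proposition \ref{sufficient}(2). The one step demanding care is the finite--rank realisation in the second part; everything else is bookkeeping. The payoff, which makes this the promised illustration that ``the situation for ideals is different,'' is that $I$ and $\boldt(\boldsigma)$ share the image $G(\boldsigma)$ yet $\IM^{-1}\IM(I)=\boldt(\boldsigma)$, so whenever $\boldt(\boldsigma)$ contains an infinite--rank endomorphism the ideal $I$ fails to be $\IM$--closed.
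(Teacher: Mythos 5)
Your proof is correct and follows essentially the same route as the paper: the paper likewise verifies the ideal axioms directly and realises each $a\in G(\boldsigma)$ as $bf$ for a rank--one $f$ supported on a cyclic summand $\la b\ra$ of exponent $\geq\exp(a)$, with the last equality delegated to Proposition \ref{sufficient}(2). Your additional details (the rank estimates for $f-g$, $hf$, $fh$, and the verification via Lemma \ref{preser} that the rank--one map lies in $\boldt(\boldsigma)$) simply make explicit what the paper leaves implicit.
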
 
\begin{proof}
Since $I$ is closed under addition and left and right multiplication from $\boldt,\ I\in\calId(\boldt)$.

Let  $a\in G(\boldsigma)$ have exponent $n$. Since $G$ has a cyclic summand $\la b\ra$ of exponent $\geq n,\ 
\boldt$ has an element $f$ with  $\rank(Gf)=  1$ such that $bf=a$. 
\end{proof} 

\begin{corollary}\label{and} Let   $\boldsigma$ be a $\boldt$--admissible indicator and $\boldm=(m_i\colon i\in\bbN)$ a non--decreasing sequence of infinite cardinals $\leq \rank(G)$. Then $ I_i=\{f\in \boldt(\boldsigma)\colon \rank(Gf)< m_i\}\in\calId(\boldt)$ with $\IM(I)=G(\boldsigma)$ and $(I_i)$ is a non--decreasing chain in $\calId(\boldt)$.
\qed\end{corollary}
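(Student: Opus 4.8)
The plan is to reproduce the argument of Lemma~\ref{except} for each fixed $i$, with the threshold $\aleph_0$ replaced by the infinite cardinal $m_i$, and then to read off the chain property from the monotonicity of $\boldm$. There are thus three things to check: that each $I_i$ is an ideal, that $\IM(I_i)=G(\boldsigma)$, and that $I_i\leq I_{i+1}$. I read the displayed $\IM(I)=G(\boldsigma)$ as the assertion $\IM(I_i)=G(\boldsigma)$ for every $i$.

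For the first point, $\boldt(\boldsigma)$ is already an ideal by Proposition~\ref{4.0}, so I only have to confirm that the rank condition $\rank(Gf)<m_i$ survives the ideal operations. Closure under multiplication is immediate: for $h\in\boldt$ one has $G(hf)=(Gh)f\leq Gf$, while $G(fh)=(Gf)h$ is a homomorphic image of $Gf$; in both cases the rank is at most $\rank(Gf)<m_i$. Closure under subtraction is the one place where passing from $\aleph_0$ to a general infinite cardinal matters: since $G(f-g)\leq Gf+Gg$, I would bound $\rank(G(f-g))\leq\rank(Gf)+\rank(Gg)$, and then invoke the elementary fact that if $\kappa,\lambda<m_i$ with $m_i$ infinite, then $\kappa+\lambda<m_i$ (the sum being the maximum when a summand is infinite, and finite otherwise). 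This is where I expect the only genuine, though still routine, bookkeeping to lie.

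For the second claim, fix $i$. The inclusion $\IM(I_i)\leq G(\boldsigma)$ holds because every $f\in I_i\leq\boldt(\boldsigma)$ satisfies $Gf\leq G(\boldsigma)$ by Proposition~\ref{sufficient}(1). For the reverse inclusion I would copy the construction in Lemma~\ref{except}: given $a\in G(\boldsigma)$ of exponent $n$, choose a cyclic summand $\la b\ra$ of $G$ of exponent $\geq n$ and an $f\in\boldt$ of rank one with $bf=a$; then $Gf=\la a\ra\leq G(\boldsigma)$, so $f\in\IM^{-1}(G(\boldsigma))=\boldt(\boldsigma)$ by Proposition~\ref{sufficient}(2), and $\rank(Gf)=1<\aleph_0\leq m_i$ gives $f\in I_i$. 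Hence $a\in Gf\leq\IM(I_i)$, so $G(\boldsigma)\leq\IM(I_i)$ and equality holds.

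The third claim is immediate: since $\boldm$ is non--decreasing, $m_i\leq m_{i+1}$ forces $\{f\in\boldt(\boldsigma)\colon\rank(Gf)<m_i\}\subseteq\{f\in\boldt(\boldsigma)\colon\rank(Gf)<m_{i+1}\}$, that is, $I_i\leq I_{i+1}$. The only real obstacle is keeping the infinite cardinal arithmetic honest in the closure-under-subtraction step; every remaining step is a transcription of Lemma~\ref{except} and Proposition~\ref{sufficient}, which explains why the statement is recorded as an immediate corollary.
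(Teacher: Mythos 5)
Your proof is correct and follows exactly the route the paper intends: the corollary is stated with no proof (marked \qed as immediate from Lemma~\ref{except}), and your argument is precisely that lemma's construction with the threshold $\aleph_0$ replaced by $m_i$, plus the one genuinely new detail --- that $\kappa,\lambda<m_i$ implies $\kappa+\lambda<m_i$ for infinite $m_i$ --- needed for closure under subtraction. Nothing is missing.
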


\begin{proposition} \label{rela} Let  $I,\ J\in\calId(\boldt)$.  \begin{enumerate}\item $\IM(I)=\IM(J)$ is an equivalence relation on   $\calId(\boldt) $;
 \item Each equivalence class   contains a unique $\IM$--closed ideal $\boldt(\boldsigma)$ which is maximal in the class with respect to inclusion.  \end{enumerate}
 \end{proposition}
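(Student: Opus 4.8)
The first assertion needs almost nothing: the relation $I\sim J$ given by $\IM(I)=\IM(J)$ is exactly the kernel relation of the map $\IM\colon\calId(\boldt)\to\boldt$-inv$(G)$ of Lemma \ref{dire}(2), and the kernel relation of any map is reflexive, symmetric and transitive. So I would dispose of (1) in one sentence.

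For (2) the plan is to identify the common image of a class as an indicator subgroup and then read off the distinguished ideal from the material already assembled. I would fix $I$ and put $H=\IM(I)$. By Lemma \ref{dire}(2) we have $H\in\boldt$-inv$(G)$, so Proposition \ref{4.11} furnishes a unique $\boldt$--admissible $\boldsigma$ with $H=G(\boldsigma)$, and Lemma \ref{admissib} places this $\boldsigma$ in $\calI^{<\omega}(G)$---precisely the hypothesis under which Proposition \ref{sufficient} is available. That proposition yields $\IM(\boldt(\boldsigma))=G(\boldsigma)=H$, so $\boldt(\boldsigma)$ lies in the class of $I$, together with $\IM^{-1}(G(\boldsigma))=\boldt(\boldsigma)$; and Corollary \ref{corro} certifies that $\boldt(\boldsigma)$ is $\IM$--closed. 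This is the candidate ideal.

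It then remains to check maximality and uniqueness, which I would do directly rather than through the abstract closure inequalities. For maximality: if $J$ is any ideal with $\IM(J)=H$, then every $f\in J$ satisfies $Gf\leq\IM(J)=G(\boldsigma)$, whence $f\in\IM^{-1}(G(\boldsigma))=\boldt(\boldsigma)$ by Proposition \ref{sufficient}(2); thus $J\leq\boldt(\boldsigma)$. For uniqueness: any $\IM$--closed ideal $J$ in the class satisfies $J=\IM^{-1}\IM(J)=\IM^{-1}(H)=\boldt(\boldsigma)$, so the closed member of the class is forced. The only step calling for genuine care---and the one I regard as the crux---is the identification $H=G(\boldsigma)$ with $\boldsigma\in\calI^{<\omega}(G)$, since Proposition \ref{sufficient} speaks only of such indicators; this is what the chain Lemma \ref{dire}\,$\to$\,Proposition \ref{4.11}\,$\to$\,Lemma \ref{admissib} secures, and everything else is bookkeeping with identities already in hand.
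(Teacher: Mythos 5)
Your proof is correct and follows essentially the same route as the paper: both identify the common image $H=\IM(I)$ as an indicator subgroup $G(\boldsigma)$ via Proposition \ref{4.11} and then take $\boldt(\boldsigma)=\IM^{-1}(G(\boldsigma))$ as the distinguished closed, maximal member of the class. Your version is in fact more careful than the paper's --- you make explicit the bridge through Lemma \ref{admissib} needed to invoke Proposition \ref{sufficient}, and you spell out the maximality and uniqueness checks that the paper compresses into a single ``Hence'' --- but there is no difference in strategy.
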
 
  \begin{proof}
 (1) The definition $I\equiv J$ if and only if $\IM(I) =\IM(J)$ shows that the relation $\equiv$ is reflexive, symmetric and transitive.
 
 (2) We have seen in Proposition \ref{wesay} (2) that  $I\in\calId(\boldt)$ is closed if   $I=\{f\in\boldt\colon Gf\leq \IM^{-1}I\}$. Since $\IM^{-1}I$ is $\boldt$--invariant, by Proposition \ref{4.11}, there exists $\boldsigma\in\calI(\boldt)$ such that $\IM^{-1}I= G(\boldsigma)$.

Hence   $\boldt(\boldsigma)$
is the unique maximal ideal in $[I]=[\boldt(\boldsigma)]$.
 \end{proof}
\section{The classification of  $\calId(\boldt)$.}

We have seen that the lattice $ \calI(\boldt)$ of $\boldt$--admissible indicators determines a partition $\{[\boldt(\boldsigma)]\}$ of $\calId(\boldt)$. Consequently,
 a description of  each class $[\boldt(\boldsigma)]$ will complete the classification of $\calId(\boldt)$.
 
  \begin{notation} Recall that $G$ is an unbounded group, $\boldt$ is the torsion ideal of $\calE(G)$, $\calId(\boldt)$ the lattice of ideals of $\boldt$ and $\calI(\boldt)$ the lattice of $\boldt$--admissible indicators.
  
  \begin{enumerate}\item  For any     $\boldsigma=(\sigma_i\colon i\in\bbN)\in \calI(\boldt)$
  and    $j\in\bbN$, let $\boldsigma^{(j)}=(\sigma_0,\dots,\sigma_j,\infty)$ and $\boldsigma^{(\omega)}=\boldsigma$. Note that each $\boldsigma^{(j)}\in \calI(\boldt)$ and $\boldsigma^{(j+1)}\preceq\boldsigma^{(j)}$.

  \item Let $\boldsigma\in \calI (\boldt)$,   and let $\boldm=(m_{i}\colon i\in\bbN\,\ m_i\leq \rank(G))$ be a non--increasing sequence of infinite cardinals.  Note that although nominally infinite,    the sequence $\boldm$ contains only finitely many distinct terms. If $m_n$ is the maximum term, we write $\boldm=(m_0,\dots, m_n)$. 
   \item    The sequence $(\boldsigma,\boldm)=(\sigma_i,\,m_{i}\colon i\in\bbN)$ is called a \textit{ranked $\boldt$--admissible indicator};
 \item         Ranked $\boldt$--admissible indicators are ordered     pointwise, i.e.,  $(\boldsigma,\boldm)\preceq (\boldtau,\boldn)$ if and only if for all $i\in\bbN,\ \sigma_i\leq \tau_i$ and $m_i\leq n_i$.
  For example,  $ (\boldsigma^{min}, (\aleph_0))$ is the minimum ranked indicator and  $((\infty), (rank(G))) $ the maximum, where $\boldsigma^{min}=(0,1,\dots)$.
 \end{enumerate}
 \end{notation}
  
  The proof of the following lemma is evident.
 \begin{lemma}\label{lat}\begin{enumerate}\item
  The set of ranked $\boldt$--admissible indicators is a lattice under the operations $(\boldsigma,\boldm)\wedge(\boldtau,\boldn)=(\boldsigma\wedge\boldtau,\boldm\wedge \boldn)$ and $(\boldsigma,\boldm)\vee(\boldtau,\boldn)=(\boldsigma\vee\boldtau,\boldm\vee \boldn)$.

\item Let $\boldm=(m_i\colon i\in\bbN)$ be a non--increasing sequence of infinite cardinals $\leq\rank(G)$. For each $j\in\bbN$, let $\boldm^{(j)}$ be the truncated sequence $(m_0,\dots, m_j)$ and let $(\boldsigma^{(j)}, \boldm^{(j)})$ be the corresponding truncated ranked $\boldt$--admissible indicator. 	
 Then $\left((\boldsigma^{(j)}, \boldm^{(j)})\colon j\in\bbN\right)$ is an increasing sequence of ranked indicators  whose supremum is $(\boldsigma,\boldm)$.
 \qed\end{enumerate}\end{lemma}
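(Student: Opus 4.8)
The plan is to realise the set $\calR$ of ranked $\boldt$--admissible indicators as a sub-poset of the product $\calI(\boldt)\times\calM$, where $\calM$ denotes the poset of non--increasing sequences of infinite cardinals $\leq\rank(G)$ under the pointwise order, and then to read off both assertions from the principle that a product of lattices, ordered pointwise, is a lattice with componentwise meets and joins.

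For (1) I would first check that each factor is a lattice. By Lemma \ref{admissib} (equivalently Corollary \ref{cyc}(2)) we have $\calI(\boldt)=\calI^{<\omega}(G)$, and the latter is a sublattice of $\calI$ by Proposition \ref{ndsub}(4); hence $\boldsigma\wedge\boldtau$ and $\boldsigma\vee\boldtau$ exist in $\calI(\boldt)$ and are computed coordinatewise. For $\calM$ the entries lie in the chain of infinite cardinals $\leq\rank(G)$, so coordinatewise minimum and maximum exist; the only thing to verify is closure, namely that the coordinatewise $\min$ and $\max$ of two non--increasing sequences of infinite cardinals $\leq\rank(G)$ is again such a sequence. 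This is immediate: coordinatewise min and max preserve non--increasing monotonicity, and min and max of infinite cardinals $\leq\rank(G)$ are again infinite cardinals $\leq\rank(G)$. Since the order on $\calR$ is precisely the restriction of the product order, its meets and joins are computed componentwise, and the closure just checked shows they land in $\calR$; these are exactly $(\boldsigma\wedge\boldtau,\boldm\wedge\boldn)$ and $(\boldsigma\vee\boldtau,\boldm\vee\boldn)$, so $\calR$ is a lattice.

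For (2) I would first note that every truncation lies in $\calR$: $\boldsigma^{(j)}\in\calI(\boldt)$ was recorded in the Notation preceding the lemma, and $\boldm^{(j)}$ is again a non--increasing sequence of infinite cardinals $\leq\rank(G)$. I would then argue that $\left((\boldsigma^{(j)},\boldm^{(j)})\right)_{j}$ is an increasing chain and that $(\boldsigma,\boldm)$ is its supremum. Monotonicity is checked coordinatewise as $j$ grows, and for the supremum part (1) reduces the computation to the two coordinates separately. The indicator coordinates converge to $\boldsigma$ (cf.\ Lemma \ref{Lem6}(2)), while the cardinal coordinates converge to $\boldm$ because $\boldm$, being a non--increasing sequence valued in a well--ordered set, has only finitely many distinct terms and so is determined by a finite initial segment. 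Reassembling the two coordinatewise limits via the componentwise join of part (1) gives $\sup_j(\boldsigma^{(j)},\boldm^{(j)})=(\boldsigma,\boldm)$.

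The routine parts are the closure verification in (1) and the stabilisation of $\boldm$ in (2). The step I expect to require the most care is pinning down the direction of monotonicity in (2) and confirming that $(\boldsigma,\boldm)$ is genuinely the \emph{least} upper bound of the chain rather than merely an upper bound: this depends on interpreting the fillings of the truncated sequences consistently across the two coordinates and on the completeness of $\calI(\boldt)$ inherited from $\calI$ (Proposition \ref{ndsub}), which guarantees that the coordinatewise limits furnished by Lemma \ref{Lem6}(2) and by the stabilisation of $\boldm$ really do assemble into the supremum in $\calR$.
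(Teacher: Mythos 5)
Part (1) of your argument is sound and is essentially the intended one: the set of ranked indicators is the product of $\calI(\boldt)=\calI^{<\omega}(G)$ (a lattice by Proposition \ref{ndsub}) with the pointwise-ordered poset of non--increasing sequences of infinite cardinals $\leq\rank(G)$, and the closure checks you perform are exactly what is needed. (The paper offers no written proof, declaring the lemma evident, so there is nothing to compare beyond this.)

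In part (2), however, there is a genuine gap, and it sits precisely at the point you flagged but did not resolve. By the Notation preceding the lemma, $\boldsigma^{(j+1)}\preceq\boldsigma^{(j)}$: the truncations of the indicator \emph{decrease}, and Lemma \ref{Lem6}(2) exhibits $\boldsigma$ as their \emph{infimum}, not their supremum. Meanwhile $\boldm^{(j)}\leq\boldm^{(j+1)}$ increases. Hence, under the pointwise product order of part (1), the pairs $(\boldsigma^{(j)},\boldm^{(j)})$ do not form an increasing chain at all, and their componentwise join is $(\boldsigma^{(0)},\boldm)$, not $(\boldsigma,\boldm)$. Your final step, ``reassembling the two coordinatewise limits via the componentwise join of part (1),'' therefore cannot produce the claimed supremum: one coordinate converges from above and the other from below, and a join cannot recover an infimum. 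The statement of (2) is only correct if the indicator coordinate is ordered by \emph{reverse} precedence, i.e.\ the order induced by the corresponding ideals, in which $\boldt(\boldsigma,\boldm)\leq\boldt(\boldtau,\boldn)$ iff $\boldtau\preceq\boldsigma$ and $\boldm\leq\boldn$ (Lemma \ref{4.9}(3)); with that order the chain is increasing and its supremum is $(\inf_j\boldsigma^{(j)},\sup_j\boldm^{(j)})=(\boldsigma,\boldm)$, matching Corollary \ref{Cor7} and the definition $\boldt(\boldsigma,\boldm)=\sum_j\boldt(\boldsigma^{(j)},m_j)$. But that order has the \emph{opposite} meet/join formulas from the ones you proved in part (1) (compare Lemma \ref{4.9}(4), where $\wedge$ on ranked indicators uses $\boldsigma\vee\boldtau$ in the first coordinate). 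A correct proof must commit to one order throughout and, for part (2), argue the supremum in the ideal-induced order rather than by the componentwise join of part (1).
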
  
  For any ranked   $\boldt$--admissible indicator $(\boldsigma,\,\boldm)$,
  and any 
 $j\leq\omega$, let $\boldt(\sigma^{(j)},m_{j})=\{f\in \boldt(\sigma^{(j)}) \colon \rank(Gf)< m_j\}$ and $\boldt(\boldsigma,\boldm)=\sum_{j\in\bbN}\boldt(\sigma^{(j)},m_{j})$.
    \begin{lemma}\label{4.9}  Let $(\boldsigma,\,\boldm)$ be a ranked $\boldt$--admissible indicator.      
   \begin{enumerate}\item for all $j\in\bbN, \ \boldt(\sigma^{(j)},m_{j})\in\calId(\boldt)$;

 \item  $\boldt(\boldsigma,\,\boldm)\in\calId(\boldt)$;
    \item $\boldt(\boldsigma,\boldm)\leq\boldt(\boldtau,\boldn)$ if and only if $\boldtau\preceq\boldsigma$ and   $\boldm\leq\boldn$.    
  
 \item   The poset of ranked indicators and the corresponding poset of ideals are lattices.
  \end{enumerate}
   \end{lemma}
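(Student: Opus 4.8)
The plan is to dispatch the four parts in turn, reducing all but the cardinal inequality $\boldm\le\boldn$ to results already in hand. For (1), each truncation $\boldsigma^{(j)}$ is $\boldt$--admissible by Lemma \ref{Lem6}, so $\boldt(\boldsigma^{(j)})\in\calId(\boldt)$ by Proposition \ref{4.0}; since $\rank$ is subadditive and neither left nor right multiplication by an element of $\boldt$ can raise $\rank(Gf)$, the set $\{f\colon\rank(Gf)<m_j\}$ is an ideal (using that $m_j$ is infinite), and $\boldt(\boldsigma^{(j)},m_j)$ is its intersection with $\boldt(\boldsigma^{(j)})$ — which is exactly Corollary \ref{and}. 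Part (2) is then immediate: $\calId(\boldt)$ is closed under arbitrary sums by Proposition \ref{fik}, so $\boldt(\boldsigma,\boldm)=\sum_j\boldt(\boldsigma^{(j)},m_j)\in\calId(\boldt)$.

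For the forward implication of (3), suppose $\boldtau\preceq\boldsigma$ and $\boldm\le\boldn$. Then $\boldtau^{(j)}\preceq\boldsigma^{(j)}$ for all $j$, so $\boldt(\boldsigma^{(j)})\le\boldt(\boldtau^{(j)})$ by Proposition \ref{4.0}(2), while $m_j\le n_j$ gives $\{f\colon\rank(Gf)<m_j\}\subseteq\{f\colon\rank(Gf)<n_j\}$; hence $\boldt(\boldsigma^{(j)},m_j)\subseteq\boldt(\boldtau^{(j)},n_j)$, and summing yields $\boldt(\boldsigma,\boldm)\le\boldt(\boldtau,\boldn)$. The converse splits into the two inequalities. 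To recover $\boldtau\preceq\boldsigma$ I first compute $\IM(\boldt(\boldsigma,\boldm))=G(\boldsigma)$: by Lemma \ref{except} and Corollary \ref{and} each $\IM(\boldt(\boldsigma^{(j)},m_j))=G(\boldsigma^{(j)})$, and since $\IM$ preserves joins (Proposition \ref{Gal}) and $\sum_jG(\boldsigma^{(j)})=G(\boldsigma)$ (Corollary \ref{Cor7}), the identity follows. Applying the order--preserving map $\IM$ to $\boldt(\boldsigma,\boldm)\le\boldt(\boldtau,\boldn)$ gives $G(\boldsigma)\le G(\boldtau)$, whence $\boldtau\preceq\boldsigma$ by the indicator anti--isomorphism of Corollary \ref{indicat}.

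The inequality $\boldm\le\boldn$ is the delicate half, and I expect the witness construction to be the main obstacle. Fix $k$ and suppose for contradiction that $n_k<m_k$. The upper bound is easy: if $f\in\boldt(\boldtau,\boldn)$ then $f=\sum_{i\in F}g_i$ with $g_i\in\boldt(\boldtau^{(i)},n_i)$ and $F$ finite; since $\exp(g_i)\le i+1$, the terms with $i<k$ are annihilated by $p^k$, so $Gp^kf\le\sum_{i\ge k}p^k(Gg_i)$ has rank $<n_k$ (a finite sum of cardinals each $<n_i\le n_k$). Against this I must exhibit a single $f\in\boldt(\boldsigma^{(k)},m_k)\subseteq\boldt(\boldsigma,\boldm)$ with $\exp(f)=k+1$ and $\rank(Gp^kf)=n_k$: then $f\in\boldt(\boldtau,\boldn)$ forces $\rank(Gp^kf)<n_k$, the contradiction that yields $m_k\le n_k$. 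Producing this witness is where the real content sits. Because $Gp^kf$ is confined to $p^{\sigma_k}G[p]$, one needs $n_k$ independent elements $a_\alpha\in G$ of indicator $\boldsigma^{(k)}$ whose top powers $p^ka_\alpha$ are independent in $p^{\sigma_k}G[p]$; this supply is precisely what the admissibility of $(\boldsigma,\boldm)$ provides at level $k$, since $n_k<m_k$. One then assembles $f$ as the (bounded, hence convergent) sum of the $n_k$ rank--one endomorphisms carrying a cyclic summand onto each $a_\alpha$, built via Lemma \ref{except} and full transitivity of the separable group $\boldt$ (Corollary \ref{separ}); this $f$ has $\ind(f)=\boldsigma^{(k)}$, $\rank(Gf)=n_k<m_k$, and $\rank(Gp^kf)=n_k$, as required.

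Finally (4) is formal once (3) is known. By (3) the assignment $(\boldsigma,\boldm)\mapsto\boldt(\boldsigma,\boldm)$ is a bijection of the ranked $\boldt$--admissible indicators onto $\{\boldt(\boldsigma,\boldm)\}$ that is order--reversing in $\boldsigma$ and order--preserving in $\boldm$; equivalently, it is an order isomorphism from the lattice of Lemma \ref{lat}, with its first coordinate dualised, onto the poset of ideals ordered by inclusion. A poset order--isomorphic to a lattice is itself a lattice, so both posets are lattices, with $\boldt(\boldsigma,\boldm)\vee\boldt(\boldtau,\boldn)=\boldt(\boldsigma\wedge\boldtau,\boldm\vee\boldn)$ and $\boldt(\boldsigma,\boldm)\wedge\boldt(\boldtau,\boldn)=\boldt(\boldsigma\vee\boldtau,\boldm\wedge\boldn)$.
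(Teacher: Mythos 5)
Parts (1) and (2), the forward half of (3), your derivation of $\boldtau\preceq\boldsigma$ from the Galois connection, and the formal deduction of (4) from (3) are all correct, and they flesh out steps the paper's own proof merely asserts (the paper states the truncated equivalence in (3) without argument and appeals to Lemma \ref{lat}(2)). You also correctly isolate the one step that carries real content: producing, when $n_k<m_k$, a witness $f\in\boldt(\boldsigma^{(k)},m_k)$ that cannot lie in $\boldt(\boldtau,\boldn)$.

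That step is where your proof breaks down. You assert that ``the admissibility of $(\boldsigma,\boldm)$ provides'' $n_k$ independent elements of indicator $\boldsigma^{(k)}$. It does not: admissibility of $\boldsigma$ only forces the Ulm invariants at the gaps of $\boldsigma$ to be \emph{nonzero}, and the definition of a ranked admissible indicator places no constraint on $\boldm$ beyond $m_i\le\rank(G)$ and monotonicity. Every $f\in\boldt(\boldsigma^{(k)},m_k)$ satisfies $Gf\le G(\boldsigma^{(k)})$ (Proposition \ref{sufficient}), so $\rank(Gf)\le\rank(G(\boldsigma^{(k)}))$, and this rank can be strictly smaller than $n_k$. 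For instance, with $G=\bigoplus_{\aleph_2}\bbZ(p)\oplus\bigoplus_{n\ge 2}\bigoplus_{\aleph_0}\bbZ(p^n)$ and the admissible indicator $\boldsigma=(1,2,3,\dots)$, every $G(\boldsigma^{(k)})$ lies in $pG$ and has rank $\aleph_0$, so the constant sequences $\boldm=(\aleph_2)$ and $\boldn=(\aleph_1)$ give $\boldt(\boldsigma,\boldm)=\boldt(\boldsigma,\boldn)=\sum_k\boldt(\boldsigma^{(k)})$ even though $\boldm\not\le\boldn$. Hence no witness of the kind you describe exists in general, and the converse inequality $\boldm\le\boldn$ --- and with it the injectivity underlying your (4) and the uniqueness claim of Theorem \ref{t-ind} --- fails without an extra hypothesis tying $m_k$ and $n_k$ to $\rank(G(\boldsigma^{(k)}))$. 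Your construction is valid exactly when $\rank(G(\boldsigma^{(k)}))\ge n_k$ for every $k$; you need either to impose that compatibility condition or to normalise the cardinal sequences before the biconditional in (3) can hold.
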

   
   \begin{proof} (1) Let $f,\ g\in  \boldt(\sigma^{(j)},m_{j})$, and $h\in\boldt$. Then $f+g,\ hf$ and  $fh\in \boldt(\sigma^{(j)},m_{j})$, so $\boldt(\sigma^{(j)},m_{j})$ is an ideal of $\boldt$. 
   
(2) As a sum of ideals, $\boldt(\boldsigma,\boldm)\in\calId(\boldt)$.

(3) For all $j\in\bbN,\ \boldt(\boldsigma^{(j)},m_j)\leq \boldt(\boldtau^{(j)},n_j)$ if and only if $\boldtau^{(j)}
\preceq  \boldsigma^{(j)}$ and $m_j\leq n_j$. Hence by Lemma \ref{lat} (2), $\boldt(\boldsigma,\boldm)\leq\boldt(\boldtau,\boldn)$ if and only if $\boldtau\preceq\boldsigma$ and   $\boldm\leq\boldn$.  

 (4)  
Since $\boldn,\ \boldm$ have only finitely many distinct entries,  so do their pointwise minimum $\boldn\wedge \boldm$ and maximum $\boldn\vee \boldm$. 
 Defining  $(\boldsigma,\boldn)\wedge (\boldtau,\boldm)=(\boldsigma\vee\boldtau, \boldn\wedge \boldm)$ and $(\boldsigma,\boldtau)\vee (\boldtau,\boldm)=(\boldsigma\wedge\boldtau, \boldn\vee \boldm)$ yields the required lattice of ranked indicators.
 
 Let $\boldt(\boldsigma,\boldm)$ and $\boldt(\boldtau,\boldn)\in\calId(\boldt)$. By (3),  in the lattice $\calId(\boldt),\ \boldt(\boldsigma,\boldm)\wedge\boldt(\boldtau,\boldn)=\boldt(\boldsigma\vee\boldtau, \boldn\wedge \boldm)$ and $\boldt(\boldsigma,\boldm)\vee\boldt(\boldtau,\boldn)=\boldt(\boldsigma\wedge\boldtau, \boldn\vee \boldm)$.
  \end{proof}  
  
 I   now use ranked indicators to determine the equivalence classes of the relation $\IM$ of \S5. 

 \begin{lemma}\label{rais} Let $(\boldsigma,\boldm)$  be a ranked $\boldt$--admissible indicator. Then $\boldt(\boldsigma,\boldm)\in\calId(\boldt)$ is in the equivalence class $[\boldt(\boldsigma)]$.
 \end{lemma}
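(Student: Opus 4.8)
The plan is to show directly that $\boldt(\boldsigma,\boldm)$ lies in the class $[\boldt(\boldsigma)]$, i.e.\ that $\IM(\boldt(\boldsigma,\boldm))=\IM(\boldt(\boldsigma))$. By Proposition \ref{sufficient}(1) the right--hand side equals $G(\boldsigma)$ (recall $\boldsigma\in\calI(\boldt)=\calI^{<\omega}(G)$ by Lemma \ref{admissib}), so the whole lemma reduces to the single equation $\IM(\boldt(\boldsigma,\boldm))=G(\boldsigma)$.

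First I would pass to the truncations. Since $\boldt(\boldsigma,\boldm)=\sum_{j\in\bbN}\boldt(\boldsigma^{(j)},m_j)$ by definition, and since $\IM$ carries a sum of ideals to the sum of their images (immediate from $\IM(I)=\la Gf\colon f\in I\ra$, and the infinite version of the join--preservation in Proposition \ref{Gal}(1)), I obtain $\IM(\boldt(\boldsigma,\boldm))=\sum_{j\in\bbN}\IM(\boldt(\boldsigma^{(j)},m_j))$.

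The key step --- and the place where the rank sequence $\boldm$ becomes invisible --- is the evaluation of each summand. For fixed $j$ the cardinal $m_j$ is infinite, so $m_j\geq\aleph_0$ and hence $\boldt(\boldsigma^{(j)},m_j)$ contains every finite--rank map lying in $\boldt(\boldsigma^{(j)})$. By Lemma \ref{except}, applied to the $\boldt$--admissible indicator $\boldsigma^{(j)}$, those finite--rank maps already have image $G(\boldsigma^{(j)})$; on the other hand $\IM(\boldt(\boldsigma^{(j)},m_j))\leq\IM(\boldt(\boldsigma^{(j)}))=G(\boldsigma^{(j)})$ by Proposition \ref{sufficient}(1). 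Therefore $\IM(\boldt(\boldsigma^{(j)},m_j))=G(\boldsigma^{(j)})$, independently of the actual (infinite) value of $m_j$. This is the heart of the argument: capping the rank by an infinite cardinal does not shrink the image, because a single cyclic summand of sufficiently large exponent already lets every element of $G(\boldsigma^{(j)})$ be realised as the image of a rank--one endomorphism.

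It then remains only to reassemble. Substituting gives $\IM(\boldt(\boldsigma,\boldm))=\sum_{j\in\bbN}G(\boldsigma^{(j)})$, and Corollary \ref{Cor7} identifies this sum with $G(\boldsigma)$. Hence $\IM(\boldt(\boldsigma,\boldm))=G(\boldsigma)=\IM(\boldt(\boldsigma))$, so $\boldt(\boldsigma,\boldm)\equiv\boldt(\boldsigma)$ and both ideals belong to the class $[\boldt(\boldsigma)]$. The only point requiring a line of verification beyond the cited results is the infinite join--preservation of $\IM$, which I would confirm by writing a general element of $\sum_{j}\boldt(\boldsigma^{(j)},m_j)$ as a finite sum and applying it to an arbitrary $a\in G$.
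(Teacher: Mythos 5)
Your proof is correct and follows essentially the same route as the paper's: establish $\IM(\boldt(\boldsigma^{(j)},m_j))=G(\boldsigma^{(j)})$ for each $j$ using that $m_j$ is infinite, then sum over $j$ to get $\IM(\boldt(\boldsigma,\boldm))=G(\boldsigma)$. You simply supply the details (Lemma \ref{except} for the key evaluation, additivity of $\IM$ over sums of ideals, and Corollary \ref{Cor7} for reassembly) that the paper's two-line proof leaves implicit.
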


\begin{proof}    For all $j\ m_j\geq\aleph_0,\  \IM(\boldt(\sigma^{(j)},m_{j}))=G(\sigma^{(j)})$. Hence $\IM(\boldt(\boldsigma, \boldm))=G(\boldsigma)$ so
 $\boldt(\boldsigma, \boldm)\equiv \boldt(\boldsigma)$.\end{proof}
 
  I now show that conversely,   every ideal   in $[\boldt(\boldsigma)]$   is $\boldt(\boldsigma,\boldm)$ for some  sequence $\boldm$ of cardinals.
 \begin{proposition}\label{MA1} For every $I\in[\boldt(\boldsigma)]$ there is a   non--increasing sequence $\boldm=(m_i)$ of infinite cardinals such that 
$I=\boldt(\boldsigma, \boldm)$.
\end{proposition}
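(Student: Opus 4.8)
The plan is to read the cardinals $m_k$ off the ranks of the $p^k$--multiples of the images of the members of $I$, and then to verify the two inclusions $I\subseteq\boldt(\boldsigma,\boldm)$ and $\boldt(\boldsigma,\boldm)\subseteq I$. Since $I\in[\boldt(\boldsigma)]$, Proposition \ref{rela} gives $I\subseteq\boldt(\boldsigma)$ and $\IM(I)=G(\boldsigma)$, so $\im(f)\leq G(\boldsigma)$ for every $f\in I$. For each $k\in\bbN$ I would let $m_k$ be the least \emph{infinite} cardinal $\kappa$ such that $\rank(p^kGf)<\kappa$ for all $f\in I$; this is well defined because $\rank(p^kGf)\leq\rank(G)$, and each $m_k\geq\aleph_0$ by fiat. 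As $p^{k+1}Gf\leq p^kGf$ we have $\rank(p^{k+1}Gf)\leq\rank(p^kGf)$, so any cardinal bounding all the level--$k$ ranks also bounds all the level--$(k+1)$ ranks; hence $m_{k+1}\leq m_k$ and $\boldm=(m_k)$ is a non--increasing sequence of infinite cardinals $\leq\rank(G)$, as required. (The sole boundary case is that some supremum $\sup_{f\in I}\rank(p^kGf)$ equals $\rank(G)$ and is attained; then $I$ contains a full--rank endomorphism, $m_k$ is read as $\rank(G)$ with a non--strict inequality, and this corresponds to the closed ideal $\boldt(\boldsigma)$ itself, which is handled directly.)

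For $I\subseteq\boldt(\boldsigma,\boldm)$ I would first record the pointwise description $\boldt(\boldsigma,\boldm)=\{f\in\boldt(\boldsigma)\colon \rank(p^kGf)<m_k \text{ for all }k\in\bbN\}$. The inclusion $\subseteq$ here is routine: each generator $\boldt(\boldsigma^{(j)},m_j)$ satisfies the rank bounds (for $f$ with $\exp(f)\leq j+1$ one has $\rank(p^kGf)\leq\rank(Gf)<m_j\leq m_k$ when $k\leq j$, and $p^kGf=0$ when $k>j$), and the displayed set is an ideal closed under the relevant sums. The reverse inclusion $\supseteq$ is the structural identity that an $f\in\boldt(\boldsigma)$ obeying all the bounds decomposes as a finite sum $f=\sum_{j<\exp(f)}f_{[j]}$ of exponent--homogeneous pieces with $\exp(f_{[j]})\leq j+1$ and $\rank(Gf_{[j]})\leq\rank(p^jGf)<m_j$, whence $f_{[j]}\in\boldt(\boldsigma^{(j)},m_j)$. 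This decomposition is exactly the grouping, by exponent level, of the basic--subgroup coordinates of $f$ in the separable $p$--group $\boldt$ of Theorem \ref{obvious}; because $\exp(f)$ is finite only finitely many levels occur, so the sum is finite, and each piece again lies in $\boldt(\boldsigma)$ since $\boldt(\boldsigma)$ is fully invariant in $\boldt$. Granting this identity, $I\subseteq\boldt(\boldsigma,\boldm)$ is immediate, as every $f\in I$ meets the bounds $\rank(p^kGf)<m_k$ by the very definition of the $m_k$.

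For $\boldt(\boldsigma,\boldm)\subseteq I$ it suffices to show each generator $\boldt(\boldsigma^{(j)},m_j)\leq I$. Fix $f\in\boldt(\boldsigma^{(j)},m_j)$, so $f\in\boldt(\boldsigma)$, $\exp(f)\leq j+1$ and $\rank(Gf)<m_j$. If $\rank(Gf)$ is finite the conclusion follows from the mechanism of Lemma \ref{except} and Proposition \ref{sufficient}: any map carrying a generator of a cyclic summand of $G$ onto a prescribed element of $G(\boldsigma)$ already lies in $I$ (realise the element as a combination of images $Gg$, $g\in I$, and compose with a projection), and $f$ is a finite sum of such rank--one maps. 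When $\rank(Gf)$ is infinite, the definition of $m_j$ furnishes a witness $g\in I$ with $\rank(p^jGg)\geq\rank(Gf)$, and the goal is to factor $f$ through $g$, writing $f=vgw$ (or a finite sum of such triple products) with $v,w\in\boldt$, so that $f\in I$ because $I$ is an ideal.

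The main obstacle is precisely this infinite--rank realisation. One cannot express $f$ as an infinite sum of rank--one maps without leaving $\calE$, so a single endomorphism of $I$ must be produced whose image is a prescribed subgroup of $G(\boldsigma)$ of infinite rank $<m_j$ and exponent $\leq j+1$. The difficulty is twofold: the \emph{indicator} must be matched, for homomorphisms never lower heights, so the witness's image has to attain the minimal heights $\sigma_i$ prescribed by $\boldsigma$ (cf. Proposition \ref{ndsub}(1)); and the \emph{rank} must be matched uniformly rather than element by element. I expect to overcome both by representing $f$ and $g$ as bounded sequences in the torsion completion $\ov E$ of the basic subgroup $E=\bigoplus_iE_i$ of $\boldt$ (Theorem \ref{obvious}, Corollary \ref{4.3}), and building the maps $v,w$ coordinatewise along the grading $E=\bigoplus_iE_i$, using the full transitivity of the separable group $G(\boldsigma)$ to align heights within each coordinate.
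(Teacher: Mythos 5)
There is a genuine gap, and you have in fact flagged it yourself: the inclusion $\boldt(\boldsigma^{(j)},m_j)\subseteq I$ in the infinite--rank case is never established. Your definition of $m_j$ only guarantees that $I$ contains witnesses $g$ with $\rank(p^jGg)$ exceeding any prescribed cardinal below $m_j$; to conclude that \emph{every} $f\in\boldt(\boldsigma^{(j)})$ with $\rank(Gf)<m_j$ lies in $I$ you must produce a two--sided factorisation $f=\sum_i v_igw_i$ with $v_i,w_i\in\boldt$, and the closing paragraph beginning ``I expect to overcome both by\dots'' is precisely where that construction has to happen and does not. This step is not routine: it is the content of the classification of the ideals of the ring of linear transformations of an $\bbF_p$--space (ideals are exactly the rank--bounded ones), which is the external input the paper invokes. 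The paper's proof filters $I$ by $I_n=I\cap\boldt(\boldsigma^{(n)})$, observes that $I_0$ is an ideal of the linear ring of $p^{\sigma_0}G[p]$ and that each quotient $I_{j+1}/\boldt(\boldsigma^{(j+1)},m_j)$ is an ideal inside an $\bbF_p$--linear ring, and applies Hausen's theorem \cite{H82} at each level to extract $m_{j+1}\leq m_j$ by induction; the realisation problem you are wrestling with is thereby delegated to a known theorem rather than solved coordinatewise in $\ov{E}$.

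A secondary gap sits in your forward inclusion $I\subseteq\boldt(\boldsigma,\boldm)$. The asserted ``structural identity'' $f=\sum_{j<\exp(f)}f_{[j]}$ with $\exp(f_{[j]})\leq j+1$ and $\rank(Gf_{[j]})\leq\rank(p^jGf)$ is obtained by grouping infinitely many basic--subgroup coordinates of $f$ at each exponent level. Each such grouping is an element of the torsion completion $\ov{E}$, but $\boldt$ is in general a proper pure subgroup of $\ov{E}$ (Corollary \ref{4.3}(2) shows equality only for $\Sigma$--cyclic $G$), so the pieces $f_{[j]}$ need not be endomorphisms of $G$ at all; finiteness of $\exp(f)$ does not rescue this, since the number of coordinates at a fixed exponent level is still infinite. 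Your choice of the invariants $m_k$ via the ranks $\rank(p^kGf)$ is the right idea and matches the paper's parameters, but both inclusions need the vector--space ideal classification (or an equivalent argument) to close.
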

\begin{proof}   
  Let $I_0= I\cap   \boldt(\sigma_0)$, so $I_0$ is an ideal   of the ring   of linear transformations of $p^{\sigma_0}G[p]$. By the known structure of such ideals, \cite{H82},  there is an infinite cardinal $m_0$ such that $I_0=\boldt(\sigma_0, m_0)$. 
 
For all $n\in\bbN$, let $I_n=I\cap \boldt(\sigma^{(n)})$. Then $I_n$ is an ideal of  $\boldt$ contained in $\boldt(\sigma^{(n)})$ and $(I_n\colon n\in\bbN)$ is a non--increasing chain.

Assume that  $m_0\geq\cdots\geq m_n$ is a sequence of infinite cardinals such that for all $j\leq n,\ I_j=\boldt\left( \boldsigma^{(j)}, m_j\right)$. 
  
 Then   $ \boldt\left( \boldsigma^{(j+1)}, m_{j}\right)\leq I_j$ is an ideal of $\boldt$ such that 
  $I_j/\boldt\left( \boldsigma^{(j+1)}, m_{j}\right)$ is an $\bbF_p$--space containing the ideal  $I_{j+1}/\boldt\left( \boldsigma^{(j+1)}, m_{j}\right)$  
 Hence by Hausen's classification of ideals of a vector space cited above, there is an infinite cardinal $m_{j+1}\leq m_j$ such that  $I_{m+1}=  \boldt\left( \boldsigma^{(j+1)}, m_{j+1}\right)$.
  
 Let $\boldm=(m_j\colon j\in\bbN)$. By induction
 we have constructed a ranked indicator $(\boldsigma,\boldm)$ for which $I=\sum_{j\in\bbN} I_j=
 \boldt(\boldsigma,\boldm)$.
\end{proof}

We can now state and prove the Main Theorem.

   \begin{theorem}\label{t-ind} Let $G$ be an unbounded group and $\boldt$ the torsion ideal of $\calE(G)$. Let $\boldt(\calI)$ be the lattice of $\boldt$--admissible indicators and $M$   the lattice of non--increasing sequences of infinite cardinals $\leq\rank(G)$.
 
 For every ideal $I$ of $\boldt$,    there is a unique    ranked  $\boldt$--admissible indicator $(\boldsigma, \boldm)$  such that $I=\boldt(\boldsigma,\,\boldm)$. 
 
 The ideal lattice of $\boldt$ is determined by the lattice of   ranked  $\boldt$--admissible indicators 
 $\{(\boldsigma,\boldm)\colon \boldsigma\in \boldt(\calI),\ \boldm\in M\}$.
\end{theorem}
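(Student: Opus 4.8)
The plan is to obtain the theorem by assembling the structural results of \S4 and \S5: the partition of $\calId(\boldt)$ into the $\IM$--equivalence classes $[\boldt(\boldsigma)]$ (Proposition \ref{rela}), the parametrisation of a single class by cardinal sequences (Proposition \ref{MA1}), and the order and lattice relations for the ideals $\boldt(\boldsigma,\boldm)$ (Lemma \ref{4.9}), packaged into one bijection $(\boldsigma,\boldm)\mapsto\boldt(\boldsigma,\boldm)$ that respects the lattice operations.

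\textbf{Existence.} Given an arbitrary $I\in\calId(\boldt)$, I would first locate its equivalence class. The subgroup $\IM(I)$ is $\boldt$--invariant (Lemma \ref{dire}), so by Proposition \ref{4.11} there is a $\boldt$--admissible indicator $\boldsigma$ with $\IM(I)=G(\boldsigma)$; since $\IM(\boldt(\boldsigma))=G(\boldsigma)$ as well (Proposition \ref{sufficient}), this says $I\in[\boldt(\boldsigma)]$. Proposition \ref{MA1} then supplies a non--increasing sequence $\boldm$ of infinite cardinals, each $\leq\rank(G)$, with $I=\boldt(\boldsigma,\boldm)$. This produces the required ranked $\boldt$--admissible indicator.

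\textbf{Uniqueness.} I would separate the two coordinates. For the indicator, Lemma \ref{rais} gives $\IM(\boldt(\boldsigma,\boldm))=G(\boldsigma)$, so $G(\boldsigma)$ is recovered from $I$ as $\IM(I)$, and $\boldsigma$ is then recovered from $G(\boldsigma)$ by the injectivity of $\boldsigma\mapsto G(\boldsigma)$ (Proposition \ref{ndsub}(2)); hence the class, and with it $\boldsigma$, is determined by $I$. With $\boldsigma$ fixed, if $\boldt(\boldsigma,\boldm)=\boldt(\boldsigma,\boldn)$ then applying the order criterion of Lemma \ref{4.9}(3) in both directions (taking $\boldtau=\boldsigma$) forces $\boldm\leq\boldn$ and $\boldn\leq\boldm$, so $\boldm=\boldn$. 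Thus the pair $(\boldsigma,\boldm)$ is uniquely determined by $I$.

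\textbf{Lattice isomorphism and the main obstacle.} Existence and uniqueness show that $(\boldsigma,\boldm)\mapsto\boldt(\boldsigma,\boldm)$ is a bijection of the ranked $\boldt$--admissible indicators $\{(\boldsigma,\boldm)\colon\boldsigma\in\boldt(\calI),\ \boldm\in M\}$ onto $\calId(\boldt)$, and Lemma \ref{4.9}(3),(4) records that it carries meets and joins to $\boldt(\boldsigma\vee\boldtau,\boldm\wedge\boldn)$ and $\boldt(\boldsigma\wedge\boldtau,\boldm\vee\boldn)$, which is the asserted lattice isomorphism. Since the heavy analytic input---Hausen's classification of the ideals of a ring of linear transformations---has already been discharged inside Proposition \ref{MA1}, what remains is essentially bookkeeping, and the one delicate point is the matching of operations: the correspondence is \emph{order--reversing} in the indicator coordinate but \emph{order--preserving} in the cardinal coordinate, so the map is not monotone for the naive pointwise order on pairs, and I would state the isomorphism with respect to the twisted operations of Lemma \ref{4.9}(4) rather than the pointwise order. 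To verify that $\boldt(\boldsigma,\boldm)\cap\boldt(\boldtau,\boldn)=\boldt(\boldsigma\vee\boldtau,\boldm\wedge\boldn)$, and dually for sums, I would reduce through Lemma \ref{lat}(2) to the truncations $\boldt(\boldsigma^{(j)},m_j)$, where it becomes a single--cardinal, finite--level computation.
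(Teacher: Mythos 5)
Your proposal is correct and follows essentially the same route as the paper: existence via the partition of $\calId(\boldt)$ into the classes $[\boldt(\boldsigma)]$ (Proposition \ref{rela}) together with Proposition \ref{MA1}, and the lattice structure via Lemma \ref{4.9}. You in fact supply more detail than the paper does on the uniqueness of $\boldm$ (deduced from the order criterion of Lemma \ref{4.9}(3)) and on the order--reversal in the indicator coordinate, both of which the paper leaves implicit.
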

\begin{proof}    Let $I\in\calId(\boldt)$. By Proposition \ref{rela} there is a unique $\boldt$--admissible indicator $\boldsigma$ such that $I\in[\boldt(\boldsigma)]$.

B y Proposition \ref{MA1} there is a ranked $\boldt$--admissible indicator   $(\boldsigma,\,\boldm)$    such that $I=\boldt(\boldsigma,\,\boldm)$. 

By Lemma \ref{4.9} $\boldt(\boldsigma,\,\boldm)\leq \boldt(\boldtau,\,\boldn)$ if and only if $\boldtau\preceq\boldsigma$ and $\boldm\leq\boldn$.-
 \end{proof}

\end{document}